\numberwithin{equation}{section}
\theoremstyle{definition}
\newtheorem{defn}{Definition}[section]
\newtheorem{rem}[defn]{Remark}
\newtheorem{conjecture}[defn]{Conjecture}
\theoremstyle{plain}
\newtheorem{thm}[defn]{Theorem}
\newtheorem{prop}[defn]{Proposition}
\newtheorem{lem}[defn]{Lemma}
\newtheorem{cor}[defn]{Corollary}
\newtheorem{question}[defn]{Question}
\newcommand{\R}{\mathbb{R}}
\title[Characterization of positive links and the $s$-invariant for links]{
Characterization of positive links and the $s$-invariant for links}
\author{Tetsuya Abe} 
\author{Keiji Tagami}
\date{\today}
\address{Osaka City University Advanced Mathematical Institute, 
3-3-138 Sugimoto, Sumiyoshi-ku
Osaka 558-8585 Japan}
\email{tabe@sci.osaka-cu.ac.jp}
\address{
Department of Mathematics, Faculty of Science and Technology, Tokyo University
of Science, Noda, Chiba, 278-8510, Japan
}
\email{tagami\_keiji@ma.noda.tus.ac.jp}
\keywords{knot; $s$-invariant; positive link; almost positive link}
\subjclass[2010]{Primary 57M25, Secondary 57M27}
\begin{document}
\maketitle
\begin{abstract}
We characterize positive links in terms of strong quasipositivity, homogeneity and the value of Rasmussen and Beliakova-Wehrli's $s$-invariant. 
We also study almost positive links,
in particular, determine the $s$-invariants of 
almost positive links.
This result  suggests that all almost positive links might 
be strongly quasipositive.
On the other hand, it implies that 
almost positive links are never homogeneous links.
\end{abstract}
%
\section{Introduction}
\par
A link is called {\it positive} if it has a diagram with only positive crossings,
which is defined combinatorially.
On the other hand,
Nakamura \cite{Nakamura} and Rudolph   \cite{Rudolph}
proved that 
positive links are strongly quasipositive links,
which are defined geometrically.
It is natural to consider the following question. 
\begin{question}
Find  differences between positive links and strongly quasipositive links.
\end{question}
Cromwell \cite{homogeneous} introduced a class of links, which is called homogeneous links. 
A homogeneous link is a generalization of 
positive links from the combinatorial view points.
Baader \cite{baader} proved that 
a knot is positive if and only if 
it is strongly quasipositive and homogeneous, 
answering Question 1.1 in the case of knots
(see also \cite{abe2}). 
One can obviously apply Baader's proof to the case of links and obtain the following.
\begin{thm}[\cite{baader}]\label{baader}
A non-split link is positive if and only if it is
strongly quasipositive and homogeneous.
\end{thm}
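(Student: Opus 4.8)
The plan is to imitate Baader's argument from \cite{baader} essentially verbatim, reducing the equivalence to an estimate on a single diagram and then feeding it Cromwell's results \cite{homogeneous} together with the Morton--Franks--Williams inequality. The forward implication should require nothing new: a diagram with only positive crossings is homogeneous, since every block of its Seifert graph consists of positive crossings only, and positive links are strongly quasipositive by Nakamura \cite{Nakamura} and Rudolph \cite{Rudolph}. So I would put all the weight on the converse.

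Suppose $L$ is non-split, strongly quasipositive and homogeneous, and fix a homogeneous diagram $D$ of $L$; since $L$ is non-split, $D$ is connected, hence its Seifert graph is connected, and by homogeneity each block of that graph is monochromatic. Write $c_+$ and $c_-$ for the numbers of positive and negative crossings of $D$ and $s(D)$ for the number of Seifert circles. It is enough to prove $c_-=0$, for then $D$ itself is a positive diagram of $L$.

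To obtain $c_-=0$ I would estimate $\min\deg_v P_L$, the lowest degree in $v$ of the HOMFLY polynomial, in two ways. On one hand, Cromwell's work on homogeneous links shows that Seifert's algorithm applied to $D$ produces a surface of maximal Euler characteristic, so $\chi(L)=s(D)-c_+-c_-$, and that the Morton bound $\min\deg_v P_L\ge w(D)-s(D)+1$ is an \emph{equality} for the homogeneous diagram $D$; hence $\min\deg_v P_L=c_+-c_--s(D)+1$. On the other hand, write $L$ as the closure $\hat\beta$ of a strongly quasipositive braid $\beta$ on $n$ strands with $k$ bands; its Bennequin surface is an embedded Seifert surface with Euler characteristic $n-k$, so $n-k\le\chi(L)$, and Morton's inequality applied to $\hat\beta$ gives $\min\deg_v P_L\ge k-n+1\ge-\chi(L)+1=c_++c_--s(D)+1$. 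Comparing the two, $c_+-c_--s(D)+1\ge c_++c_--s(D)+1$, that is $c_-\le 0$, so $c_-=0$.

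I expect the only real work to be bookkeeping rather than a new idea: one must check that Cromwell's two ingredients --- the genus statement and the sharpness of the extreme $v$-degree of the HOMFLY polynomial --- together with the Morton inequality hold verbatim for link diagrams, which should be routine since Cromwell already phrases them for links, and one must use the non-split hypothesis in the right place. That hypothesis is the subtle point: it is exactly what lets one read $\chi(L)$ off the connected diagram $D$ and off the quasipositive braid at the same time, and it is genuinely necessary, since the two-component unlink is positive and homogeneous but not strongly quasipositive. As no essentially new difficulty arises, Baader's proof transfers to links, which is what the ``cf.'' in the statement signals.
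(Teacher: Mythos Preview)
Your argument is correct and is precisely Baader's original proof, carried over to links; this is exactly what the paper intends by ``one can obviously apply Baader's proof to the case of links,'' so in that sense you have reproduced the paper's proof of Theorem~\ref{baader}.

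That said, the paper does give its \emph{own} proof of the equivalence $(1)\Leftrightarrow(2)$, not via HOMFLY but as a special case of Theorem~\ref{theorem:positivity2}, and the route is genuinely different. In place of the Morton--Franks--Williams inequality and Cromwell's sharpness of $\min\deg_v P_L$, the paper uses the $s$-invariant: for a connected homogeneous diagram $D$ the Kawamura--Lobb bounds collapse (Theorem~\ref{theorem:Rasmussen-homog}), giving $s(L)=w(D)-O(D)+2O_+(D)-1$; for a non-split strongly quasipositive link one has $s(L)=2g(L)+\sharp L-1$ (Theorem~\ref{theorem:positivity}); and since the homogeneous diagram realises the genus, these two expressions combine to force $O_+(D)-1=c_-(D)$, which for a reduced diagram means $c_-(D)=0$ (Lemma~\ref{lem:positivity}). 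Your HOMFLY argument and the paper's $s$-invariant argument are formally parallel---both compare a diagram-level quantity computed exactly on the homogeneous side against a bound coming from the quasipositive surface---but the $s$-invariant version is what lets the paper weaken the hypothesis from ``strongly quasipositive'' all the way down to condition~(4), where no quasipositivity is assumed and only the numerical value $s(L)=2g(L)+\sharp L-1$ is retained. Your approach, by contrast, is self-contained in classical HOMFLY machinery and needs none of Sections~\ref{sec:kawamura-lobb-inequ}--\ref{sec:strong-quasi}.

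One small quibble: your closing remark that the two-component unlink is ``not strongly quasipositive'' is dubious under most conventions (the identity $2$-braid exhibits it), so it is not the cleanest witness that the non-split hypothesis is needed; the real reason you need non-splitness is so that the homogeneous diagram is connected and $\chi(L)$ is read off simultaneously from $D$ and from the quasipositive braid, as you correctly identify earlier.
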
 
We generalize the above theorem as follows:
\begin{thm}
\label{theorem:positivity2}
Let $L$ be a non-split link with $\sharp L$ components. Then $(1)$--$(4)$ are equivalent. \\
$(1)$ $L$ is positive,\\
$(2)$ $L$ is homogeneous and strongly quasipositive,\\
$(3)$ $L$ is homogeneous, quasipositive and $g_{*}(L)=g(L)$,\\
$(4)$ $L$ is homogeneous and $s(L)=2g_{*}(L)+\sharp L-1=2g(L)+\sharp L-1$, \\
where $s(L)$ is Rasmussen and Beliakova-Wehrli's $s$-invariant of $L$, $g_{*}(L)$ is the four-ball genus of $L$ and $g(L)$ is the three-genus of $L$. 
\end{thm}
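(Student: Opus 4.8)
\medskip

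\noindent\textbf{Proof proposal.}
Since Theorem~\ref{baader} already gives the equivalence $(1)\Leftrightarrow(2)$, the plan is to close the loop by proving $(2)\Rightarrow(3)\Rightarrow(4)\Rightarrow(1)$. The implications $(2)\Rightarrow(3)$ and $(3)\Rightarrow(4)$ are the routine directions. For $(2)\Rightarrow(3)$: a strongly quasipositive link is in particular quasipositive, so it only remains to check $g_*(L)=g(L)$; presenting $L$ as the closure of a strongly quasipositive braid and letting $F$ be the associated Bennequin (quasipositive) surface, the slice--Bennequin inequality of Rudolph (via the Kronheimer--Mrowka adjunction inequality) shows $F$ is $\chi$-maximal among connected surfaces in $B^4$ bounding $L$, so $g(F)=g_*(L)$; since $g(L)\le g(F)$ trivially and $g_*(L)\le g(L)$ always, all three coincide. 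For $(3)\Rightarrow(4)$: homogeneity is carried along and $g_*(L)=g(L)$ is assumed, so it suffices to prove $s(L)=2g_*(L)+\sharp L-1$ for quasipositive $L$. The bound $s(L)\le 2g_*(L)+\sharp L-1$ is the slice-genus estimate for the $s$-invariant of links (Beliakova--Wehrli; see also Pardon, Kawamura); for the reverse, write $L$ as the closure of a quasipositive $n$-braid that is a product of $k$ band generators, so that its closure diagram has exponent sum $k$ and $n$ Seifert circles, giving $s(L)\ge k-n+1$ by the slice--Bennequin inequality for $s$, while the associated quasipositive surface has Euler characteristic $n-k$, whence $k-n+1\ge 2g_*(L)+\sharp L-1$. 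Combining, $s(L)=2g_*(L)+\sharp L-1=2g(L)+\sharp L-1$.

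The heart of the argument is $(4)\Rightarrow(1)$. Assume $L$ is homogeneous with $s(L)=2g(L)+\sharp L-1$; note this equality already forces $g_*(L)=g(L)$, since $s(L)\le 2g_*(L)+\sharp L-1\le 2g(L)+\sharp L-1$ always. Fix a connected homogeneous diagram $D$ of $L$ (a non-split link has only connected diagrams, and it has a homogeneous one by hypothesis), with $c_+(D)$ positive crossings, $c_-(D)$ negative crossings, and $O(D)$ Seifert circles. By Cromwell's theorem, Seifert's algorithm on a homogeneous diagram yields a minimal genus Seifert surface, so
\[
2g(L)+\sharp L-1=c_+(D)+c_-(D)-O(D)+1 .
\]
Along its Seifert surface, $D$ is an iterated Murasugi sum of the ``special'' diagrams attached to the blocks of its Seifert graph, and homogeneity means each such summand has all of its crossings of one sign, i.e.\ is a positive or a negative diagram. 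If $D$ has no negative summand then $D$ is itself a positive diagram and $L$ is positive. So it suffices to show that a single negative summand forces the \emph{strict} inequality $s(L)<c_+(D)+c_-(D)-O(D)+1$, contradicting the display above. I would deduce this from a sharp slice--Bennequin-type inequality for the $s$-invariant on homogeneous diagrams (in the spirit of Kawamura and Lobb), refining $s(L)\ge w(D)-O(D)+1$ on the one side and, on the other, bounding $s(L)$ strictly below the genus bound $c_+(D)+c_-(D)-O(D)+1=w(D)-O(D)+1+2c_-(D)$ whenever $c_-(D)>0$; concretely, one checks that a negative special link realizes the \emph{upper} Bennequin-type bound $w+O-1$ rather than the lower one (being the mirror of the positive case), so that Murasugi summing such a piece into $D$ costs strictly more in $s$ than it contributes to $-\chi$ of the surface. (As a sanity check, $D$ a positive trefoil diagram plumbed with a negative Hopf band gives $s(L)=1<3=2g(L)+\sharp L-1$.)

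The step I expect to be the main obstacle is precisely this last one: establishing the sharp slice--Bennequin inequality for the $s$-invariant of a homogeneous diagram --- equivalently, controlling $s$ under Murasugi sum of positive and negative special pieces --- with enough precision to recover \emph{strictness} when negative crossings are present. The likely route is to run the computation blockwise, using that $s$ is additive under connected sum of links and that Lee/Bar-Natan homology of a special summand is pinned down by its sign, and then to track how the generators of Lee homology interact under the plumbing. Everything else is a bookkeeping combination of Theorem~\ref{baader}, Cromwell's minimality theorem, Rudolph's slice--Bennequin inequality, and the standard $s$-invariant estimates for links.
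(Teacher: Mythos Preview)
Your overall architecture matches the paper's: the implications $(1)\Rightarrow(2)\Rightarrow(3)\Rightarrow(4)$ are handled exactly as you outline, and the substance lies in $(4)\Rightarrow(1)$, where you correctly aim to show that a reduced homogeneous diagram with a negative crossing must satisfy $s(L)<c(D)-O(D)+1=2g(L)+\sharp L-1$.

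Where your proposal falls short is that you leave this last step as an acknowledged ``main obstacle'' and propose to attack it by tracking Lee generators through Murasugi sums. The paper's resolution is both simpler and more direct than you anticipate, and does not require any new control of $s$ under plumbing. The point is that for a \emph{homogeneous} diagram $D$ the Kawamura--Lobb lower and upper bounds coincide (this is the content of the paper's Theorem~\ref{theorem:Rasmussen-homog}, proved by the very $*$-product additivity you allude to), giving the \emph{exact} formula
\[
s(L)=w(D)-O(D)+1+2\bigl(O_{+}(D)-1\bigr),
\]
where $O_{+}(D)$ is the number of connected components of the diagram obtained by smoothing all negative crossings. Combining this with Cromwell's $2g(L)+\sharp L-1=c(D)-O(D)+1$, the hypothesis $s(L)=2g(L)+\sharp L-1$ becomes the purely combinatorial equality $O_{+}(D)-1=c_{-}(D)$. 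But in a connected reduced diagram with at least one negative crossing $p$, that crossing is non-nugatory, so smoothing it keeps the diagram connected; smoothing the remaining $c_{-}(D)-1$ negative crossings then yields at most $c_{-}(D)$ components, i.e.\ $O_{+}(D)-1\le c_{-}(D)-1<c_{-}(D)$. This contradiction is the paper's Lemma~\ref{lem:positivity}. So the ``sharp slice--Bennequin-type inequality'' you were looking for is already an equality on homogeneous diagrams, and the strictness when $c_{-}(D)>0$ comes from the elementary count $O_{+}(D)\le c_{-}(D)$, not from any delicate analysis of Lee homology under Murasugi sum.
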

For the definition of Rasmussen and Beliakova-Wehrli's $s$-invariant, see \cite{cat_colored, rasmussen1}. 
Theorem~$\ref{theorem:positivity2}$ is a generalization of \cite[Theorem~1.3]{abe2}. 
We prove Theorem~$\ref{theorem:positivity2}$ in Section~$\ref{sec:positive}$. 
The key of the proof is the computation of the $s$-invariants
of homogeneous links (see Sections~\ref{sec:homogeneous}-\ref{sec:positive}). 
\par
In this paper, 
we also study almost positive links. 
An {\it almost positive link} is a non-positive link which is represented by a diagram with exactly one negative crossing. 
In general, it is hard to distinguish almost positive links from positive links. 
We consider the following question. 
\begin{question}
Find similarities and differences between positive links and almost positive links. 
\end{question}
There are some similarities between them
(see \cite{cochran-gompf}, \cite{homogeneous}, \cite{positive_signature}, \cite{almostpositive}, \cite{almost_stoimenow}, and \cite{negative_signature}).
One of the interesting and expected similarities is Stoimenow's question: 
\begin{question}[{\cite[Question 4]{stoimenow1}}]\label{conj:stoimenow}
Is any almost positive link strongly quasipositive, or at least quasipositive? 
\end{question}
We give an evidence towards an affirmative answer to Question~\ref{conj:stoimenow} as follows.
\begin{thm}
\label{main2}
Let $L$ be a non-split  link with 
$\sharp L$ components.
If $L$ is almost positive or strongly quasipositive, then
\[s(L) =2g_{*}(L)+\sharp L-1=2g(L)+\sharp L-1. 
\]
\end{thm}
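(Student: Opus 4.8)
I need to prove: for a non-split link $L$ with $\sharp L$ components that is almost positive or strongly quasipositive,
$$s(L) = 2g_*(L) + (\sharp L - 1) = 2g(L) + (\sharp L - 1).$$

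Let me think about what I know:
- For strongly quasipositive links, there's a classical result (Rudolph, and Rasmussen-type bounds) relating the $s$-invariant to the Seifert genus.
- $g_*(L) \le g(L)$ always (the Seifert surface is a surface in $B^4$).
- There's a general slice-Bennequin-type inequality: $s(L) \le 2g_*(L) + (\sharp L - 1)$ or similar.

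Let me recall: For a knot $K$, Rasmussen's $s$-invariant satisfies $|s(K)| \le 2g_*(K)$, and for strongly quasipositive knots, $s(K) = 2g_*(K) = 2g(K)$. For links, Beliakova-Wehrli generalized $s$, and there should be an inequality $s(L) \le 2g_*(L) + (\sharp L - 1)$.

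For strongly quasipositive links: A strongly quasipositive link bounds a quasipositive surface (a piece of the fiber surface of a torus link, built from bands). Rudolph showed these surfaces are of minimal genus — i.e., $g(L)$ is realized by a quasipositive Seifert surface, so actually the Bennequin surface. The key: for a strongly quasipositive link with quasipositive braid representative on $n$ strands with $k$ bands (positive band generators), the Seifert surface from Seifert's algorithm on the "braided" diagram has Euler characteristic $n - k$, and there's a slice-Bennequin inequality that says $s(L) \ge (\text{writhe}) - (\text{Seifert circles}) + 1$ — wait, let me be careful about the link normalization.

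**Key steps:**

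First, I'd establish the upper bound. The slice-Bennequin inequality for the $s$-invariant (Beliakova-Wehrli, Kawamura, Lobb, Plamenevskaya generalizations — the paper mentions "Kawamura-Lobb inequality" in Section \ref{sec:kawamura-lobb-inequ}) gives, for any diagram $D$ of $L$ with writhe $w(D)$ and $O(D)$ Seifert circles: $s(L) \le w(D) - O(D) + 1$. Actually for the upper bound I want $s(L) \le 2g_*(L) + \sharp L - 1$, which should follow from the general fact that $s$ gives a lower bound on the slice Euler characteristic: $s(L) \le 1 - \chi_*(L) = 1 - (2 - 2g_*(L) - \sharp L) = 2g_*(L) + \sharp L - 1$. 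Combined with $g_*(L) \le g(L)$, the whole chain of equalities will follow once I prove the reverse inequality $s(L) \ge 2g(L) + \sharp L - 1$ in each case.

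Second, the lower bound for strongly quasipositive links. If $L$ is strongly quasipositive, it is represented as the closure of a quasipositive braid word $\beta$ on $n$ strands that is a product of $k$ band generators $\sigma_{i,j}$. The associated Bennequin surface $F$ is a Seifert surface built from $n$ disks and $k$ positive bands, with $\chi(F) = n - k$. On one hand, Rudolph's theorem (slice-Bennequin) shows this surface realizes the four-ball genus: $-\chi(F) = 2g_*(L) + \sharp L - 2$, hence $n - k = 2 - 2g_*(L) - \sharp L$. Actually I should also use that for strongly quasipositive links $g_*(L) = g(L)$ (Bennequin surface is minimal genus — this follows from the HOMFLY/Bennequin bound, or is stated earlier). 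On the other hand, the Kawamura–Lobb inequality applied to the braid diagram (which is a positive diagram in the band-generator sense) gives $s(L) \ge w - O + 1 = k - n + 1 = 2g_*(L) + \sharp L - 1$. So combined with the upper bound, equality holds. I should double check that Kawamura-Lobb applies to band-generator-positive diagrams, not just crossing-positive ones — this is exactly the generalization the paper develops, so I may invoke it.

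Third, and this is where the real work is, the lower bound for almost positive links. Here is where I expect the main obstacle. Let $D$ be a diagram of $L$ with exactly one negative crossing $c$. Let $D_+$ be the diagram obtained by changing $c$ to a positive crossing (this gives a positive link, or at least a positive diagram), and $D_0$ the oriented resolution. There's a skein-type inequality for the $s$-invariant under a crossing change: changing a negative crossing to a positive one changes $s$ by $0$ or $2$ — more precisely $s(D_+) - 2 \le s(D) \le s(D_+)$, or in the other convention. Meanwhile $D_0$ is a positive diagram. I'd relate $g(L)$, $g_*(L)$ to the genera of the positive links $L_+$ (closure of $D_+$) and $L_0$ (closure of $D_0$), using that for a positive diagram Seifert's algorithm gives a minimal genus surface (Cromwell), and track how smoothing/changing the crossing affects Euler characteristic and component number. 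The delicate point: I must show $s(L) = 2g(L) + \sharp L - 1$ exactly, which means pinning down both the skein inequality direction and showing $g(L) = g_*(L)$ for almost positive links — the latter is itself nontrivial (Stoimenow and others studied it; it should follow from signature/HOMFLY bounds or from the structure of the almost-positive Seifert surface obtained from Seifert's algorithm on $D$, which has $\chi = O(D) - (c_+(D) + 1)$ where $c_+$ is the number of positive crossings). I'd argue that Seifert's algorithm on an almost positive diagram still yields a minimal-genus Seifert surface (this is known, cf. the references on almost positive links) so that $2g(L) + \sharp L - 1 = c_+(D) + 1 - O(D) + (\text{components bookkeeping})$, and then show the Kawamura-Lobb-type lower bound $s(L) \ge c_+(D) - 1 - O(D) + \text{something}$ matches — the "$-1$" coming from the single negative crossing — giving $s(L) \ge 2g(L) + \sharp L - 1$, which together with the universal upper bound $s(L) \le 2g_*(L) + \sharp L - 1 \le 2g(L) + \sharp L - 1$ forces all three to coincide.

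**Main obstacle.** The hardest part is the almost positive case: I must (a) prove or cite that Seifert's algorithm on an almost positive diagram gives a minimal genus surface so that the three-genus is computed combinatorially, (b) establish the precise Kawamura-Lobb-type lower bound for $s$ on a diagram with a single negative crossing with no loss, and (c) conclude $g_* = g$ for almost positive links. Steps (a) and (c) may require a separate genus computation (perhaps via the HOMFLY polynomial bound of Morton-Franks-Williams or a signature estimate), and getting the $s$-bound in (b) sharp — rather than off by $2$ — is the crux; I expect it to hinge on carefully applying the Kawamura-Lobb inequality to $D$ itself after checking its hypotheses survive the presence of one negative crossing, which is presumably precisely what Sections \ref{sec:kawamura-lobb-inequ}--\ref{sec:positive} set up.
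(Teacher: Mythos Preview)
Your treatment of the strongly quasipositive case is essentially the paper's argument (Theorem~\ref{theorem:positivity}): the Kawamura--Lobb lower bound applied to the quasipositive braid diagram matches the upper bound coming from the quasipositive Seifert surface.

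For the almost positive case, however, there is a genuine gap. First, your claim that ``Seifert's algorithm on an almost positive diagram still yields a minimal-genus Seifert surface'' is false in general: by Stoimenow's result (Theorem~\ref{stoimenow1}), if the negative crossing $p$ shares its pair of Seifert circles with some positive crossing, then $g(L)=g(D)-1$, so the canonical surface is \emph{not} minimal. The paper's proof therefore splits into two cases according to this dichotomy. Second, and more seriously, your proposed tool---applying Kawamura--Lobb directly to the almost positive diagram $D$---does not resolve the ``off by $2$'' problem in the other case. When no positive crossing joins the same two Seifert circles as $p$, one has $g(L)=g(D)$, and you need $s(L)\geq 2g(D)+\sharp L-1$. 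But the two Seifert circles at $p$ may still be connected through a longer path in the Seifert graph, giving $O_{+}(D)=1$; then Kawamura--Lobb yields only $s(L)\geq w(D)-O(D)+1=2g(D)+\sharp L-3$, which is too weak by exactly $2$.

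The paper closes this gap not with a diagrammatic bound but with a Khovanov homology computation: Lemma~\ref{lem1} shows that in this case $H_{Kh}^{0,\,2g(D)+\sharp L-4}(L)=0$, and then Proposition~\ref{key} (which says $H_{Kh}^{0,s(L)-1}(L)\neq 0$) rules out $s(L)=2g(D)+\sharp L-3$. Combined with the crossing-change inequality $s(L_{+})-2\leq s(L)\leq s(L_{+})$ and $s(L_{+})=2g(D)+\sharp L-1$, this forces $s(L)=2g(D)+\sharp L-1$. You correctly flagged that pinning down the sharp lower bound is the crux, but the mechanism is a vanishing result in Khovanov homology, not a refinement of Kawamura--Lobb.
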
 
Moreover, we determine the $s$-invariant of an almost positive link in terms of its almost positive diagram (see Theorem~$\ref{link-rem}$). 
We also confirm Question~\ref{conj:stoimenow} for 
 fibered almost positive knots  (Theorem~$\ref{thm:fiber-almost}$)
and almost positive knots up to $12$ crossings in Section~$\ref{appendix}$. 
\par
On the other hand, 
there are some differences between 
positive links and almost positive links. 
In this paper, we give a significant difference between them. 
In fact, we prove the following. 
\begin{cor}%
\label{cor2}
Any almost positive link is not homogeneous. 
\end{cor}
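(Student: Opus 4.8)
The plan is to derive this as a consequence of the two main results, Theorems \ref{theorem:positivity2} and \ref{main2}, by an argument by contradiction. Suppose $L$ is an almost positive link that is homogeneous. First I would recall that, by definition, an almost positive link is in particular \emph{not} a positive link. On the other hand, Theorem \ref{main2} applies to almost positive links and gives
\[
s(L)=2g_{*}(L)+(\sharp L-1)=2g(L)+(\sharp L-1).
\]
Thus $L$ satisfies condition $(4)$ of Theorem \ref{theorem:positivity2}: it is homogeneous and its $s$-invariant realizes the extremal value $2g_{*}(L)+(\sharp L-1)=2g(L)+(\sharp L-1)$.

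Next I would invoke the equivalence $(4)\Rightarrow(1)$ in Theorem \ref{theorem:positivity2}, which forces $L$ to be positive. This contradicts the fact that $L$, being almost positive, is by definition non-positive. Hence no almost positive link can be homogeneous.

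There is one subtlety to address: Theorem \ref{theorem:positivity2} is stated for \emph{non-split} links, so I should make sure that an almost positive link may be assumed non-split, or handle the split case separately. A split link has a diagram that is a disjoint union of diagrams of its split components; if such a link were almost positive, the single negative crossing lies in exactly one split component, and every other split component would be presented by a positive diagram. A split link is never positive (a positive diagram of a split link would, after reduction, still be split, but one checks positivity is incompatible with a split positive diagram having more than the trivial structure — more simply, positive links are non-split by results on the Seifert form, or one reduces to the non-split factor). The cleanest route is: if $L$ is split, pass to the almost positive split component $L_0$; then $L_0$ is a non-split almost positive link, and homogeneity of $L$ implies homogeneity of $L_0$, so the non-split case applies to $L_0$ and yields a contradiction. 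I expect this bookkeeping about the split case to be the only real obstacle; the core of the argument is the immediate chaining of Theorem \ref{main2} into condition $(4)$ of Theorem \ref{theorem:positivity2}.
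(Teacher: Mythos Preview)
Your core argument is correct and is exactly the paper's proof: combine the $s$-invariant computation for almost positive links (the paper cites Theorem~\ref{link-rem} directly, you cite Theorem~\ref{main2}, which is equivalent here) with the implication $(4)\Rightarrow(1)$ of Theorem~\ref{theorem:positivity2}, and observe that an almost positive link is by definition not positive.

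Your handling of the split case, however, contains errors. The assertion that ``positive links are non-split'' is false: the disjoint union of two right-handed trefoils is a split positive link. Your fallback reduction to a non-split component $L_{0}$ is also not fully justified, since a homogeneous diagram of a split link $L$ need not itself be a split diagram, so it is not immediate that homogeneity of $L$ passes to $L_{0}$. That said, the paper does not address this point either: both Theorem~\ref{theorem:positivity2} and Theorem~\ref{main2} are stated only for non-split links, so the corollary should be read with the same standing hypothesis. Your instinct to flag the issue is good; just drop the incorrect claim about positive links and note instead that the non-split hypothesis is inherited from the two theorems being invoked.
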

Note that positive links are homogeneous
and this corollary follows from Theorems~\ref{theorem:positivity2} and \ref{main2}, 
see Section~$\ref{sec:almost-positive-link}$. 
Moreover, using Corollary \ref{cor2}, 
we give infinitely many knots which are pseudo-alternating and are not homogeneous
(which are counterexamples of Kauffman's conjecture (Conjecture~\ref{kauffman_conjecture})). 
\begin{prop}%
\label{cor3}
There are infinitely many knots which are pseudo-alternating and are not homogeneous. 
\end{prop}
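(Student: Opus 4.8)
The plan is to reduce the statement to the construction of an infinite family of knots that are simultaneously pseudo-alternating and not homogeneous, and for the latter property I will produce knots that are \emph{almost positive}, so that Corollary~\ref{cor2} applies directly. (Such knots are then counterexamples to Conjecture~\ref{kauffman_conjecture}.)

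The first and, I expect, hardest step is to fix a single \emph{prime} knot $K_{0}$ that is at once almost positive and pseudo-alternating. To find such a $K_{0}$ one starts from an almost positive diagram --- a diagram with exactly one negative crossing representing a non-positive knot --- and exhibits a spanning surface of it as a Murasugi sum (plumbing) of primitive flat surfaces, i.e.\ of Seifert surfaces of special alternating diagrams; concretely such a $K_{0}$ can be located among the almost positive knots of small crossing number treated in Section~\ref{appendix}. Here the genuinely hands-on part is the verification of pseudo-alternateness for this one knot: it asks for an explicit plumbing decomposition into special alternating pieces, which is a property of a surface decomposition rather than something visible in a minimal diagram or in a standard polynomial or genus invariant. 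By Corollary~\ref{cor2}, once $K_{0}$ is almost positive it is not homogeneous.

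Next I would define $K_{n}:=K_{0}\#\,T(2,2n+1)$ for $n\ge 1$, where $T(2,2n+1)$ is the positive $(2,2n+1)$-torus knot; this is a knot, and the $K_{n}$ are pairwise distinct because Seifert genus is additive under connected sum, so $g(K_{n})=g(K_{0})+n$. Each $T(2,2n+1)$ is special alternating, hence pseudo-alternating, and the connected sum of pseudo-alternating links is pseudo-alternating --- the Murasugi sum of the two primitive-flat decompositions is one for $K_{n}$ --- so every $K_{n}$ is pseudo-alternating. Finally, $K_{n}$ is not homogeneous: if it were, then its prime summand $K_{0}$ would be homogeneous, since homogeneity passes to prime factors (part of the connected-sum behaviour analysed by Cromwell~\cite{homogeneous}), contradicting the previous paragraph. (Alternatively, $K_{n}$ is in fact almost positive --- it has a diagram with a single negative crossing, and it cannot be positive, since positivity would likewise descend to the prime summand $K_{0}$ --- and then Corollary~\ref{cor2} applies to $K_{n}$ directly.) This produces infinitely many pseudo-alternating non-homogeneous knots.

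In short, the argument rests on two ingredients: (a) the existence of one prime, almost positive, pseudo-alternating knot --- the step absorbing essentially all of the work, and the main obstacle --- and (b) the behaviour of the pseudo-alternating and the homogeneous (equivalently positive) classes under connected sum, which is classical. Given these, Corollary~\ref{cor2} finishes the proof.
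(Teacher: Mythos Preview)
Your approach is correct but takes a genuinely different route from the paper. The paper does not bootstrap via connected sums: it invokes Stoimenow's infinite family $K_n$ of Figure~\ref{Kn2}, each member already shown almost positive in \cite[Example~6.1]{almost_stoimenow}, then exhibits a generalized flat surface for every $K_n$ by one explicit surface isotopy (Figure~\ref{Kn}), and applies Corollary~\ref{cor2}. Your construction is more economical on the geometric side --- the seed you flag as the main obstacle is already supplied by Silvero's observation \cite{silvero}, quoted in Section~\ref{sec:app}, that $10_{145}$ is pseudo-alternating, so no new plumbing check is needed --- but it spends that saving on the lemma that positivity (or homogeneity) descends to prime summands. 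This lemma is correct, though you should verify that \cite{homogeneous} really contains it in the form you use; the usual route is via visual primeness of the relevant class of diagrams. The paper's argument avoids this issue altogether by exhibiting almost positive knots term by term rather than manufacturing them by connected sum.
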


\par
This manuscript is organized as follows: 
In Section~$\ref{sec:homogeneous}$, we recall Kawamura-Lobb's inequality and homogeneous links. 
In Section~$\ref{sec:strong-quasi}$, we recall strongly quasipositive links. 
In Section~$\ref{sec:positive}$, we give a characterization of positive links. 
In Section~$\ref{sec:almost-positive-link}$, we compute the $s$-invariants of almost positive links. 
As a corollary, we prove that any almost positive link is not homogeneous (Corollary~$\ref{cor2}$). 
In Section~$\ref{appendix}$, 
we consider the strong quasipositivities of almost positive knots with up to $12$ crossings. 
In Section~$\ref{sec:app}$, 
we give infinitely many counterexamples of Kauffman's conjecture on pseudo-alternating links and alternative links. 
\par
Throughout this paper, we call Rasmussen and Beliakova-Wehrli's invariant by $s$-invariant. Also, we assume that all links and diagrams are oriented. 
\par
\section{Kawamura-Lobb's inequality and homogeneous links}\label{sec:homogeneous}
In this section, we recall homogeneous links and their properties. 
\subsection{Kawamura-Lobb's inequality for the $s$-invariant}
In this subsection, we recall Kawamura-Lobb's inequality
for the $s$-invariant. 
\par
Here we recall some definitions.
For a connected diagram $D$,
let  $w(D)$ be the writhe of $D$,  
$O(D)$ the number of Seifert circles for $D$  
and $O_{+}(D)$ (resp.~$O_{-}(D)$)  the number of connected components of the diagram 
obtained from $D$ by smoothing all negative (resp.~positive) crossings of $D$.
Kawamura \cite{kawamura2} and Lobb \cite{lobb1} gave estimations for the $s$-invariant of a link 
independently, which turned out to be the same estimation. 
The statement is the following.
\begin{thm} [{\cite{kawamura2}, \cite[Theorem~1.10]{lobb1}}] \label{them:kawaumura}
Let $D$ be a connected diagram of a link $L$.
Then, we obtain 
\[ w(D)-O(D)+1+2(O_{+}(D)-1) \le s(L) \le w(D) +O(D)-1-2(O_{-}(D)-1).\]
\end{thm}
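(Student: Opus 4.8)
The plan is to prove the two inequalities in Theorem~\ref{them:kawaumura} by constructing explicit cobordisms in the four-ball and applying the slice-genus bound of Theorem~\ref{fact:1.1}, together with the normalization that the $s$-invariant of an unlink on $k$ components is $1-k$. I would handle the lower bound $w(D)-O(D)+1+2(O_{+}(D)-1)\le s(L)$ first, and then obtain the upper bound by applying the lower bound to the mirror image $\bar L$ (with the mirrored diagram $\bar D$) and invoking $s(L)+s(\bar L)\le 0$ from Theorem~\ref{fact:connect}; note that under mirroring one has $w(\bar D)=-w(D)$, $O(\bar D)=O(D)$, and $O_{+}(\bar D)=O_{-}(D)$, so the upper bound drops out of the lower bound after a short computation.

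For the lower bound, the key geometric input is a cobordism built from the positive-crossing smoothings. First I would smooth every negative crossing of $D$; the resulting diagram $D'$ represents a link $L'$ whose diagram has only positive crossings and whose underlying planar curves fall into $O_{+}(D)$ connected components, so $D'$ is a diagram of a positive link that is a ``Murasugi-sum-like'' split union of $O_{+}(D)$ positive pieces. Smoothing a negative crossing is realized by a single band (a saddle), so there is a cobordism $S_1$ from $L$ to $L'$ with $\chi(S_1)=-n_-$ (where $n_-$ is the number of negative crossings), and by Theorem~\ref{fact:1.1} this gives $s(L)-n_-\le s(L')$. Next I would bound $s(L')$ from below: since $L'$ is a positive link (indeed strongly quasipositive), its $s$-invariant is computed from its positive diagram via the standard Seifert-algorithm surface, giving $s(L')=w(D')-O(D')+1+ \text{(number of split components}-1)\cdot(-1)\cdot\ldots$; more precisely one uses that for a positive diagram the Seifert surface realizes the slice genus and $s$ equals $w(D')-O(D')+2O_+(D)-1$, which one checks by capping off with disks the $O(D')$ Seifert circles and tracking Euler characteristics. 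Finally, $w(D')=w(D)-n_-\cdot(\text{contribution})$—in fact $w(D')=n_+-0$ since the negative crossings were removed, and $w(D)=n_+-n_-$—and $O(D')=O(D)$ because smoothing a negative crossing oriented-compatibly does not change the Seifert circles (it does nothing to the Seifert picture). Assembling these identities yields $s(L)\le n_- + (n_+ - O(D) + 2O_+(D)-1) = w(D)-O(D)+1+2(O_+(D)-1)$ is reversed into the desired lower bound; I would be careful here about the direction of each inequality, since the saddle moves point the ``wrong'' way and one must instead run the cobordism from $L'$ back to $L$ or use a split-union refinement.

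The main obstacle I anticipate is precisely this bookkeeping of cobordism directions and Euler characteristics, and in particular correctly handling the $O_+(D)-1$ split-component term: one cannot simply smooth all negative crossings and quote the positive-link formula, because the intermediate link $L'$ is a \emph{split} union and one must account for the $s$-invariant of a split union via $s(L_1\amalg L_2)=s(L_1)+s(L_2)-1$ from Theorem~\ref{fact:connect}, which is exactly where the factor $2(O_+(D)-1)$ (versus $O_+(D)-1$) enters after combining with the parity/normalization. A cleaner route, which I would actually pursue, is to mimic the original Kawamura--Lobb argument: build a single connected oriented cobordism $S$ in $B^4$ from $L$ to an unlink by first resolving all negative crossings (saddles) and then capping the resulting $O_+(D)$-component positive ``brick'' diagram down to an unlink, count $\chi(S)$ carefully in terms of $n_+$, $n_-$, $O(D)$, $O_+(D)$, and apply Theorem~\ref{fact:1.1} once; then symmetrize via the mirror. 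The delicate point throughout is ensuring the cobordism is built from bands and births/deaths in a way that every component has boundary on $L$ (the hypothesis of Theorem~\ref{fact:1.1}), which forces a specific order of the saddle moves.
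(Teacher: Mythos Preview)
Your lower-bound argument has a genuine gap: smoothing all $n_-$ negative crossings and invoking the split-union formula yields only the weaker Seifert-type bound $w(D)-O(D)+1\le s(L)$, not the full Kawamura--Lobb bound. Here is the arithmetic. After smoothing all negative crossings you obtain a positive diagram $D'$ with $O_+(D)$ connected components, $n_+$ crossings, and $O(D')=O(D)$ Seifert circles. By Lemma~\ref{lem:1} on each component and the split-union formula $s(L_1\amalg L_2)=s(L_1)+s(L_2)-1$, you get $s(L')=\sum_i\bigl(c(D'_i)-O(D'_i)+1\bigr)-(O_+(D)-1)=n_+-O(D)+1$. The $n_-$ saddles give a cobordism with $\chi=-n_-$, and since it consists only of saddles you may run it from $L'$ to $L$ to obtain $s(L)\ge s(L')-n_-=w(D)-O(D)+1$. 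The $2(O_+(D)-1)$ term is simply absent; it does \emph{not} arise from the split-union correction, which in fact contributes with the wrong sign. Your second route (cobording all the way to an unlink and counting $\chi$) fares no better, for the same reason.

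The idea you are missing is the one the paper exploits: do not smooth all negative crossings. Order them and smooth one at a time; exactly $O_+(D)-1$ of these smoothings would disconnect the running diagram (equivalently, those crossings are nugatory at the moment you reach them). Skip those $O_+(D)-1$ crossings. You then have a \emph{connected} diagram $D'$ with $O_+(D)-1$ remaining negative crossings, all nugatory; reducing a nugatory crossing is a planar isotopy (no cobordism cost) that removes one Seifert circle. The resulting connected positive diagram $D''$ has $c(D'')=n_+$ and $O(D'')=O(D)-(O_+(D)-1)$, so $s(L')=n_+-O(D)+O_+(D)$. The cobordism now uses only $n_--(O_+(D)-1)$ saddles, and you get $s(L)\ge n_+-O(D)+O_+(D)-n_-+(O_+(D)-1)=w(D)-O(D)+1+2(O_+(D)-1)$. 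Each skipped crossing buys you $+2$: one unit from the larger $s(L')$ (fewer Seifert circles) and one unit from the shorter cobordism.

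Your plan for the upper bound---apply the lower bound to $\bar L$ and use $s(L)+s(\bar L)\le 0$ from Theorem~\ref{fact:connect}---is correct and is a legitimate shortcut compared to the paper's direct construction of a cobordism to a non-split negative link (Lemma~\ref{lem:2}). For knots this is standard since $s(\bar K)=-s(K)$; for links the inequality $s(L)+s(\bar L)\le 0$ suffices, as you observed.
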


\subsection{Homogeneous links}
For a fixed diagram $D$,
we consider when the upper bound and the lower bound of Kawamura-Lobb's inequality coincide.
The answer is when $D$ is homogeneous. 
In particular, the $s$-invariant of any homogeneous link is determined by its homogeneous diagram and Kawamura-Lobb's inequality. 
This result was given by the first author \cite{abe2}. 
In this section, we see this result in terms of $*$-product. 
\par
We recall the definition of $\ast$-product of diagram (see also \cite{homogeneous}). 
The Seifert circles of a diagram is divided into two types:
a Seifert circle is of \textit{type~1}
 if it does not contain any other Seifert circles in one of the 
complementary regions of the Seifert circle in $\R ^{2}$,
otherwise it is of \textit{type~2}.
Let $D \subset \R^{2}$ be a knot diagram and $C$ 
 a type~2 Seifert circle of $D$.
Then $C$ separates $\R^{2}$ into two components $U$ and $V$ such that
$U \cup V =\R^{2}$ and $U \cap V = \partial U=\partial V=C$.
Let $D_1$ and $D_2$ be the diagrams obtained form $D \cap U$ and 
$D \cap V$ by adding suitable arcs from $C$, respectively.
Then $C$ \textit{decomposes} $D$ into
a {\it $*$-product} of $D_1$ and $D_2$, which is denoted by $D=D_1 * D_2$.
We call this decomposition a {\it $*$-product decomposition} of $D$.
A diagram is \textit{special} if $D$ has no Seifert circles
of type~2.
It is not hard to see that a special positive (or negative) diagram is alternating and a special alternating diagram is positive or negative. 
Clearly, any diagram is decomposed into 
\[ D_1 * D_2* \cdots * D_n, \]
where $D_i$ is a special diagram.
\par
For a diagram, any simple closed curve in $\mathbf{R}^{2}$ meeting the diagram transversely at two points cuts the diagram into two parts. 
A diagram is {\it strongly prime} if one of such parts has no crossing for any simple closed curve meeting the diagram transversely at two points (see \cite{knot-gtm}). 
If $D$ is not strongly prime,
 $D$ is represented as a connected sum of non-trivial diagrams $D_{1}$ and $D_{2}$ on $\mathbf{R}^{2}$. 
Then we also write $D=D_1 * D_2$.
Any diagram $D$ is decomposed into 
\[ D_1 * D_2* \cdots * D_n, \]
where $D_i$ is a strongly prime diagram.
\par
As a result, any diagram is decomposed into 
\[ D_1 * D_2* \cdots * D_n, \]
where $D_i$ is a special and strongly prime diagram. 
This $*$-product decomposition of $D$ depends only on $D$. 
On the other hand, for given diagrams $D_{1}$ and $D_{2}$, a $*$-product $D_{1}*D_{2}$ is not well defined. 
Throughout this section, if we write $D=D_{1}*D_{2}$, it is one of the diagrams which have such a $*$-product decomposition. 
\par
Let $L(D)$ and $U(D)$ be the lower bound and the upper bound of Kawamura-Lobb's inequality, respectively. 
Namely, 
\[ L(D)=w(D)-O(D)+1+2(O_{+}(D)-1), \]
\[ U(D)= w(D) +O(D)-1-2(O_{-}(D)-1).\]
\begin{lem} \label{lem:star}
Let $D_1*D_2$ be a connected link diagram which has a $\ast$-product decomposition of two diagrams $D_{1}$ and $D_{2}$. 
Then, we have 
\begin{align*}
L(D_1*D_2)&=L(D_1)+L(D_2), \\ 
U(D_1*D_2)&=U(D_1)+U(D_2). 
\end{align*} 
\end{lem}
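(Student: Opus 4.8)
The plan is to reduce everything to the behaviour of the three quantities $w$, $O$, $O_{+}$, $O_{-}$ under a $*$-product, since $L(D)$ and $U(D)$ are explicit linear combinations of these. Recall that a $*$-product $D = D_{1} * D_{2}$ is obtained either by gluing along a type~2 Seifert circle or as a connected sum of diagrams in $\mathbf{R}^{2}$; in either case the underlying operation is to identify an arc of $D_{1}$ with an arc of $D_{2}$ (and, in the type~2 Seifert-circle case, to merge one Seifert circle of $D_{1}$ with one of $D_{2}$). First I would record the obvious additivity of the writhe: no crossings are created or destroyed, so $w(D_{1}*D_{2}) = w(D_{1}) + w(D_{2})$. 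Next I would treat the Seifert circle count. In the connected-sum case the Seifert circle through the connect-sum arc of $D_{1}$ is identified with the corresponding one of $D_{2}$, so $O(D_{1}*D_{2}) = O(D_{1}) + O(D_{2}) - 1$; in the type~2 decomposition case the distinguished type~2 Seifert circle $C$ is precisely the circle being shared, and again exactly one circle is merged, giving the same formula $O(D_{1}*D_{2}) = O(D_{1}) + O(D_{2}) - 1$.

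The substantive point is the behaviour of $O_{+}$ and $O_{-}$. Smoothing \emph{all} negative crossings of $D = D_{1}*D_{2}$ is the same as smoothing all negative crossings of $D_{1}$ and of $D_{2}$ separately and then performing the same gluing. In the connected-sum case the gluing arc survives the smoothing and joins the two resulting diagrams into one connected piece along an arc, so the number of connected components drops by exactly one: $O_{+}(D_{1}*D_{2}) = O_{+}(D_{1}) + O_{+}(D_{2}) - 1$. In the type~2 Seifert-circle case, the shared circle $C$ contains no crossings of $D$ on it in the relevant sense — more precisely, after smoothing, the component of the smoothed $D_{1}$ containing (the image of) $C$ and the component of the smoothed $D_{2}$ containing $C$ are glued along the arcs of $C$, again merging exactly two connected components into one. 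Either way $O_{+}(D_{1}*D_{2}) = O_{+}(D_{1}) + O_{+}(D_{2}) - 1$, and by the symmetric argument (or by applying this to the mirror image) $O_{-}(D_{1}*D_{2}) = O_{-}(D_{1}) + O_{-}(D_{2}) - 1$.

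Granting these four identities, the conclusion is pure bookkeeping. For the lower bound,
\begin{align*}
L(D_{1}*D_{2}) &= w(D_{1}*D_{2}) - O(D_{1}*D_{2}) + 1 + 2\bigl(O_{+}(D_{1}*D_{2}) - 1\bigr)\\
&= \bigl(w(D_{1}) + w(D_{2})\bigr) - \bigl(O(D_{1}) + O(D_{2}) - 1\bigr) + 1 + 2\bigl(O_{+}(D_{1}) + O_{+}(D_{2}) - 2\bigr)\\
&= \Bigl(w(D_{1}) - O(D_{1}) + 1 + 2(O_{+}(D_{1}) - 1)\Bigr) + \Bigl(w(D_{2}) - O(D_{2}) + 1 + 2(O_{+}(D_{2}) - 1)\Bigr)\\
&= L(D_{1}) + L(D_{2}),
\end{align*}
where the middle line uses $-(-1) + 1 + 2(-2) = -2 = -1 - 1$ to split the constants symmetrically, and the identical computation with $O_{+}$ replaced by $O_{-}$ and signs adjusted gives $U(D_{1}*D_{2}) = U(D_{1}) + U(D_{2})$.

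The main obstacle is the bookkeeping for $O_{+}$ and $O_{-}$ across the two flavours of $*$-product simultaneously — one must check that in the type~2 Seifert-circle case the shared circle $C$ genuinely survives the all-negative (resp.\ all-positive) smoothing as a connecting arc, so that exactly one connected component is lost, rather than zero or two; this is where the definitions of \emph{special} and \emph{strongly prime} and the structure of the $*$-product decomposition are actually used, and it is the only step that is not a one-line computation.
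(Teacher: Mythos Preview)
Your proof is correct and follows exactly the paper's approach: the paper simply records the four identities $w(D_1*D_2)=w(D_1)+w(D_2)$, $O(D_1*D_2)=O(D_1)+O(D_2)-1$, $O_{\pm}(D_1*D_2)=O_{\pm}(D_1)+O_{\pm}(D_2)-1$ and declares the lemma immediate, while you supply the (correct) justification for these and the explicit algebra. One small correction to your closing remark: the properties \emph{special} and \emph{strongly prime} play no role here --- the $O_{\pm}$ identity holds for any $*$-product, since smoothing all negative (resp.\ positive) crossings is governed by the Seifert graph with only positive (resp.\ negative) edges, and gluing two such graphs at the single shared vertex $C$ always drops the component count by exactly one.
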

\begin{proof}
It follows from the following facts: 
\begin{align*}
\omega (D_1*D_2)&= \omega(D_1) +\omega(D_2), \\
O(D_1*D_2)&= O(D_1) +O(D_2)-1, \\
O_{+}(D_1*D_2)&= O_{+}(D_1) +O_{+}(D_2)-1, \\
O_{-}(D_1*D_2)&= O_{-}(D_1) +O_{-}(D_2)-1. 
\end{align*}
\end{proof}
A diagram is {\it homogeneous} if it has a $*$-product decomposition whose factors are some special alternating diagrams.
A {\it homogeneous link} is a link represented by a homogeneous diagram (\cite{homogeneous}, and see also \cite{baader}, \cite{banks1} and \cite{manch1}). 
Note that positive or negative links are homogeneous. 
\par
Let $\Delta(D)$ be the half of the difference between $U(D)$ and $L(D)$, that is, 
\begin{align*}
\Delta (D):=(U(D)-L(D))/2=O(D)+1-O_{+}(D)-O_{-}(D). 
\end{align*}
The following result ensures that $\Delta(D)=0$ for any homogeneous diagram $D$. 
\begin{thm}\label{theorem:Rasmussen-homog}
Let $D=D_1*D_2* \cdots * D_n$ be a connected homogeneous diagram of a link $L$, where each $D_{i}$ is a special alternating diagram. 
Then we obtain $\Delta(D)=0$. 
\end{thm}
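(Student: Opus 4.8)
The plan is to reduce the claim to the case of a single special alternating factor, then verify that case directly. By Lemma~\ref{lem:star}, both $L$ and $U$ are additive under the $*$-product, hence so is $\Delta$: if $D = D_1 * D_2 * \cdots * D_n$ then $\Delta(D) = \sum_{i=1}^{n}\Delta(D_i)$. Therefore it suffices to prove $\Delta(D_i) = 0$ for each special alternating diagram $D_i$. So the first step is simply to invoke Lemma~\ref{lem:star} and pass to a single special alternating factor.

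For the base case, let $E$ be a connected special alternating diagram. As noted in the text, a special alternating diagram is either positive or negative. Thus I would first treat the connected special \emph{positive} diagram case: here every crossing is positive, so smoothing all negative crossings does nothing and leaves a connected diagram, giving $O_{+}(E) = 1$; and since $E$ is special, smoothing all positive crossings with respect to the Seifert-algorithm orientation recovers exactly the Seifert circles, so $O_{-}(E) = O(E)$. Plugging into $\Delta(E) = O(E) + 1 - O_{+}(E) - O_{-}(E)$ gives $\Delta(E) = O(E) + 1 - 1 - O(E) = 0$. The negative case is symmetric, exchanging the roles of $O_{+}$ and $O_{-}$ (or one may simply pass to the mirror diagram, under which $\Delta$ is clearly invariant since $O$, $O_{+}$, $O_{-}$ are permuted/preserved appropriately). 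This is essentially the content of the lemma immediately preceding Lemma~\ref{lem:star} in the excerpt, so I would cite that computation.

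The step I expect to be the real crux — the one deserving genuine care rather than a one-line citation — is the identity $O_{-}(E) = O(E)$ (resp.\ $O_{+}(E) = O(E)$) for a connected special diagram $E$. The point is that when all Seifert circles of $E$ are of type~1 except for a nested structure with no decomposing type~2 circle, the Seifert state (smoothing \emph{all} crossings compatibly with the orientation) has exactly $O(E)$ components by definition, and for a special positive diagram every crossing is positive so ``smoothing all positive crossings'' coincides with the Seifert smoothing. The only thing to check is that this smoothing is counted with multiplicity correctly and that $E$ being special is what forces no collapse of components to occur in the complementary smoothing — but since for a positive diagram \emph{all} crossings are positive, the statement $O_{-}(E) = O(E)$ is immediate from the definition of $O_{-}$ and the Seifert state, with specialness not even needed for this direction; specialness enters only to guarantee $E$ is alternating, which is irrelevant once we know it is positive or negative. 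So in fact the base case is genuinely elementary, and the whole proof is: additivity of $\Delta$ (Lemma~\ref{lem:star}) plus the two-line positive/negative computation of $\Delta$ on a special alternating diagram.

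\begin{proof}
By Lemma~\ref{lem:star}, $U$ and $L$ are additive under the $*$-product, hence
\[
\Delta(D) = \sum_{i=1}^{n} \Delta(D_i).
\]
It therefore suffices to show $\Delta(D_i) = 0$ for each special alternating diagram $D_i$. Since a special alternating diagram is positive or negative, and by the preceding lemma $L(D_i) = U(D_i)$ in either case, we conclude $\Delta(D_i) = (U(D_i) - L(D_i))/2 = 0$. Summing over $i$ gives $\Delta(D) = 0$.
\end{proof}
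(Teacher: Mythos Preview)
Your proof is correct and follows essentially the same approach as the paper: invoke the additivity of $L$ and $U$ under $*$-product from Lemma~\ref{lem:star}, then use the preceding lemma (that $L(D)=U(D)$ for connected positive or negative diagrams) together with the fact that special alternating diagrams are positive or negative. Your extended discussion of the base case is unnecessary but harmless, since you correctly conclude that it reduces to the lemma already stated.
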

\begin{proof}
We have $\Delta(D_{i})=0$ for $i=1, \dots, n$ since any special alternating diagram is positive or negative. 
By Lemma~$\ref{lem:star}$ we obtain 
\[ L(D) = \sum L(D_{i}) = \sum U(D_{i})=U(D).\]
\end{proof}
\begin{cor}
Let $D=D_1*D_2* \cdots *D_n$ be a connected homogeneous diagram of a link $L$, where each $D_{i}$ is a special alternating diagram. 
Then, we have 
\[ s(L)=\sum _{i=1}^{n} {s(D_i)} =L(D)=U(D).\] 
In particular,  $s(\overline{L})=-s(L)$.
\end{cor}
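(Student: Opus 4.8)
The plan is to leverage the previous theorem (Theorem~\ref{theorem:Rasmussen-homog}), which already gives $\Delta(D) = 0$, i.e. $L(D) = U(D)$, for a connected homogeneous diagram $D = D_1 * D_2 * \cdots * D_n$ with each $D_i$ special alternating. Since Kawamura-Lobb's inequality (Theorem~\ref{them:kawaumura}) reads $L(D) \le s(L) \le U(D)$, the equality $L(D) = U(D)$ forces $s(L) = L(D) = U(D)$ immediately. So the only remaining content is the additivity statement $s(L) = \sum_{i=1}^{n} s(D_i)$ and the mirror identity $s(\overline{L}) = -s(L)$.

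For the additivity, I would argue as follows. Each special alternating diagram $D_i$ is positive or negative, hence by Lemma~\ref{lem:1} or Lemma~\ref{lem:2} its $s$-invariant is determined, and in all cases $s(D_i) = L(D_i) = U(D_i)$ (this is exactly the first lemma of Section~\ref{sec:homogeneous}, applied to each factor). Here by $s(D_i)$ I mean the $s$-invariant of the link represented by $D_i$. Then Lemma~\ref{lem:star} (together with its iterated version over $n$ factors, via an obvious induction on $n$) gives $L(D) = \sum_{i=1}^n L(D_i)$. Combining,
\[
s(L) = L(D) = \sum_{i=1}^{n} L(D_i) = \sum_{i=1}^{n} s(D_i),
\]
which is the first display. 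The equalities $s(L) = L(D) = U(D)$ are then recorded as noted above.

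For the mirror identity, observe that the mirror image $\overline{L}$ is represented by $\overline{D} = \overline{D_1} * \overline{D_2} * \cdots * \overline{D_n}$, which is again a connected homogeneous diagram (mirroring sends a special alternating diagram to a special alternating diagram, swapping positive and negative factors). Applying the first part of the corollary to $\overline{D}$ gives $s(\overline{L}) = \sum_i s(\overline{D_i})$. For each factor, $s(\overline{D_i}) = -s(D_i)$: if $D_i$ is a connected positive diagram with $c$ crossings and $O$ Seifert circles, then $\overline{D_i}$ is a connected negative diagram with the same $c$ and $O$, so Lemmas~\ref{lem:1} and~\ref{lem:2} give $s(D_i) = c - O + 1$ and $s(\overline{D_i}) = -c + O - 1 = -s(D_i)$, and symmetrically when $D_i$ is negative. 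Summing yields $s(\overline{L}) = -\sum_i s(D_i) = -s(L)$.

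The step requiring the most care is bookkeeping in the induction for Lemma~\ref{lem:star} across $n$ factors, since a $*$-product decomposition of $D$ is not canonical when presented with specified factors; but since we fix the actual decomposition of $D$ (as stated in Section~\ref{sec:homogeneous}, the $*$-product decomposition depends only on $D$), the telescoping sums $O(D) = \sum O(D_i) - (n-1)$, $O_{\pm}(D) = \sum O_{\pm}(D_i) - (n-1)$, and $w(D) = \sum w(D_i)$ follow by iterating the four identities in the proof of Lemma~\ref{lem:star}, and the resulting $-(n-1)$ shifts cancel exactly in the combination defining $L(D)$. I do not expect any genuine obstacle here — the corollary is essentially a formal consequence of Theorem~\ref{theorem:Rasmussen-homog} and the lemmas already in place.
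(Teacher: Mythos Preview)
Your proposal is correct and follows exactly the approach the paper intends: the paper states this corollary without proof, immediately after Theorem~\ref{theorem:Rasmussen-homog}, whose proof already contains the key identity $L(D)=\sum L(D_i)=\sum U(D_i)=U(D)$. Your argument simply combines that with Kawamura--Lobb's inequality to pin down $s(L)$, identifies each $s(D_i)$ with $L(D_i)$ via Lemmas~\ref{lem:1}/\ref{lem:2}, and handles the mirror by applying the same reasoning to $\overline{D}$; this is precisely the intended (but omitted) justification.
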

The following theorem was proved by the first author. 
From Theorem~$\ref{theorem:Rasmussen-homog}$ and Theorem~$\ref{lem:Rasmussen-homog}$ below, we see that $\Delta(D)=0$ if and only if $D$ is homogeneous. 
\begin{thm}[\cite{abe2}]\label{lem:Rasmussen-homog}
Let $D$ be a connected diagram of a link $L$.
If $\Delta(D)=0$, then  $D$ is homogeneous. %
\end{thm}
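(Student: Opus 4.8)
The plan is to argue by induction on the number of crossings of $D$, using the $*$-product decomposition $D = D_1 * D_2 * \cdots * D_n$ into special, strongly prime factors. By Lemma~\ref{lem:star} we have $\Delta(D) = \sum_{i=1}^n \Delta(D_i)$, and since each $D_i$ is a diagram of a connected (hence split-free) sublink with $O(D_i) + 1 - O_+(D_i) - O_-(D_i) = \Delta(D_i) \ge 0$ — the nonnegativity being exactly the content of Kawamura--Lobb's inequality $L(D_i) \le U(D_i)$ — the hypothesis $\Delta(D)=0$ forces $\Delta(D_i)=0$ for every $i$. So it suffices to treat the case where $D$ itself is special and strongly prime. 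In that situation I want to show directly that $\Delta(D)=0$ implies $D$ is positive or negative, which (being special) makes it alternating, and in particular homogeneous.

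First I would set up the Seifert graph $G(D)$: vertices are the Seifert circles of $D$, and each crossing contributes an edge, labelled $+$ or $-$ according to its sign. Since $D$ is connected, $G(D)$ is connected; since $D$ is special and strongly prime, $G(D)$ has no cut vertex, i.e.\ it is $2$-connected (a decomposing type-2 Seifert circle is precisely a cut vertex of $G(D)$, and a strongly-prime obstruction corresponds to a separating pair that splits off a crossing-free part). The key translation is that $O_+(D)$ (resp.\ $O_-(D)$) equals the number of connected components of the graph obtained from $G(D)$ by \emph{contracting} all the $+$-edges (resp.\ $-$-edges) — contracting an edge of the Seifert graph is exactly what smoothing the corresponding crossing does to the collection of Seifert circles of the smoothed diagram. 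Writing $V = O(D)$ for the number of vertices and $e_\pm$ for the number of $\pm$-edges, contracting the $e_-$ negative edges in a connected graph on $V$ vertices yields a graph with at least $V - e_-$ components, with equality iff the negative edges form a forest; so $O_+(D) \ge V - e_- $ with equality iff the $-$-edges span no cycle, and symmetrically $O_-(D) \ge V - e_+$. Adding these, $O_+(D) + O_-(D) \ge 2V - (e_+ + e_-) = 2V - c(D)$; but I also need the reverse estimate $\Delta(D) \ge 0$, which rearranges to $O_+(D) + O_-(D) \le V + 1$.

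Combining, $\Delta(D) = 0$ says $O_+(D) + O_-(D) = V+1$, and I want to deduce that either $e_+ = 0$ or $e_- = 0$. The heart of the argument is a graph-theoretic claim: if $G$ is a $2$-connected graph whose edges are coloured in two colours, and if contracting the edges of one colour leaves a forest on the other colour's "quotient" in the precise sense above and vice versa, then all edges have a single colour — equivalently, a $2$-connected graph cannot be partitioned into two nonempty spanning forests unless it is a single edge, and more relevantly, the equality case $O_+(D)+O_-(D)=V+1$ cannot hold when both colours are present in a $2$-connected graph with more than one edge. I expect to prove this by considering a cycle in $G$ (which exists and, by $2$-connectivity, every edge lies on a cycle) and tracking how a monochromatic vs.\ bichromatic cycle affects the component counts after contraction: a cycle all of whose edges are $+$ forces a component drop when contracting $+$-edges, eating into the budget; a careful bookkeeping of independent cycles (a cycle basis, of size $c(D) - V + 1$) against the deficiencies $O_+(D) - (V - e_-)$ and $O_-(D) - (V - e_+)$ pins down exactly when equality is achievable. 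The main obstacle — and where I'd spend the real work — is this last combinatorial equality analysis: making precise that in a $2$-connected (no cut vertex) Seifert graph the only way to simultaneously make both the $+$-edges and the $-$-edges "acyclic after the relevant contraction" is to have one colour class empty. Once that is done, $D$ special with all crossings of one sign is alternating, hence homogeneous, completing the induction.
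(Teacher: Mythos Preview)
The paper does not give its own proof of this statement; it simply cites \cite{abe2}. So there is no in-paper argument to compare against, and your proposal has to be judged on its own merits.

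Your overall architecture is correct and is essentially the standard one: use the $*$-product decomposition into special, strongly prime pieces, invoke Lemma~\ref{lem:star} together with $\Delta(D_i)\ge 0$ to reduce to a single special, strongly prime factor, and then show that such a factor with $\Delta=0$ is positive or negative (hence special alternating). That reduction is fine.

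The graph-theoretic execution, however, has genuine errors. Smoothing a negative crossing in the oriented way leaves the Seifert circles unchanged and simply removes that crossing; in the Seifert graph $G(D)$ this is \emph{deletion} of the corresponding $-$-edge, not contraction. Consequently $O_{+}(D)$ is the number of components of the spanning subgraph of $G(D)$ on the $+$-edges, i.e.\ $O_{+}(D)=V-r_{+}$ where $r_{+}$ is the rank of the $+$-edge set in the cycle matroid. Your stated inequality ``$O_{+}(D)\ge V-e_{-}$, with equality iff the $-$-edges span no cycle'' is wrong on both counts (the trivial bound is $O_{+}(D)\ge V-e_{+}$, and the relevant inequality is matroid submodularity $r_{+}+r_{-}\ge r(E)=V-1$, which is exactly $\Delta(D)\ge 0$). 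Everything downstream of this mistranslation is correspondingly garbled.

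Once the dictionary is fixed, the core step has a clean formulation you are circling but never reach: $\Delta(D)=0$ says $r_{+}+r_{-}=r(E)$, i.e.\ the $+$-edges (equivalently the $-$-edges) form a \emph{separator} of the cycle matroid $M(G(D))$. But $M(G)$ is a connected matroid (no nontrivial separator) exactly when $G$ is $2$-connected; equivalently, in a $2$-connected graph any two edges lie on a common cycle, so if both colours are present there is a bichromatic cycle, and a bichromatic cycle forces $r_{+}+r_{-}>r(E)$. That single observation replaces your ``cycle-basis bookkeeping'' paragraph entirely. The assertion that the Seifert graph of a special, strongly prime diagram is $2$-connected is correct, but it does require a short planarity argument (blocks at a cut vertex occupy consecutive sectors, yielding a $2$-point curve violating strong primeness); you should not leave it as a bare claim.
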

%
%
\subsection{Kawamura's inequality}\label{sec:good-diagram}
Kawamura \cite{kawamura1} gave another estimation for the $s$-invariant for any non-positive and non-negative knot. 
The first author \cite{abe1} gave an alternative proof of the estimation by using state cycles of the Lee homology. 
In this section, we determine the difference between Kawamura-Lobb's inequality and Kawamura's inequality.  
\par
Let $D$ be a diagram of a link. 
A Seifert circle of $D$ is {\it strongly negative (resp.~positive)} if it is not adjacent to any positive (resp.~negative) crossing. 
Let $O_{<}(D)$ (resp.~$O_{>}(D)$) be the number of the strongly negative (resp.~positive) circles of $D$. 
Then we obtain the following Kawamura's inequality. 
\begin{thm}[\cite{kawamura1}, see also \cite{abe1}]
Let $D$ be a connected diagram of a non-positive and non-negative link $L$. Then we obtain 
\begin{align*}
w(D)-O(D)+1+2O_{<}(D)\leq s(L)\leq w(D)+O(D)-1-2O_{>}(D). 
\end{align*}
\end{thm}
\begin{rem}
Kawamura \cite{kawamura1} and the first author \cite{abe1} only proved the above theorem for the $s$-invariants of  knots. However, 
both of their methods can be applied to the $s$-invariants for links. 
\end{rem}
Any strongly negative (resp.~positive) circle of $D$ is a connected component of the diagram obtained from $D$ by smoothing all negative (resp.~positive) crossings of $D$. 
Hence, if D is neither positive nor negative, we obtain 
\begin{align*}
O_{<}(D)+1&\leq O_{+}(D), \\
O_{>}(D)+1&\leq O_{-}(D), 
\end{align*}
in particular, we notice that Kawamura-Lobb's inequality is sharper than Kawamura's inequality. 
\par 
Let $D$ be a connected link diagram and $S_{D}$ be the Seifert graph of $D$, that is, the vertices of $S_{D}$ correspond to the Seifert circles of $D$ and two vertices are connected by an edge with the label $+$ (resp.~$-$) if there is a positive (resp.~negative) crossing of $D$ which is adjacent to the circles corresponding to the two vertices. 
Let $S_{D}^{+}$ (resp.~$S_{D}^{-}$) be the graph obtained from $S_{D}$ by removing all the edges with the label $-$ (resp.~$+$) and all the vertices corresponding to the strongly negative (resp.~positive) circles of $D$. 
If $D$ is positive (resp.~negative), the graph $S_{D}^{-}$ (resp.~$S_{D}^{+}$) is empty. 
Then we have the following. 
\begin{lem}
Let $D$ be a connected link diagram. Then we obtain 
\begin{align*}
O_{<}(D)+|S_{D}^{+}|&=O_{+}(D), \\
O_{>}(D)+|S_{D}^{-}|&= O_{-}(D), 
\end{align*}
where $|S_{D}^{+}|$ and $|S_{D}^{-}|$ is the number of the components of $S_{D}^{+}$ and $S_{D}^{-}$, respectively. 
\end{lem}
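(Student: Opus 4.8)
The plan is to identify the connected components of the diagram obtained from $D$ by smoothing all negative crossings with the connected components of a subgraph of the Seifert graph $S_{D}$, and then to read off the contribution of the strongly negative circles. Concretely, I would let $\Gamma^{+}$ denote the subgraph of $S_{D}$ obtained by deleting every $-$-labelled edge while keeping \emph{all} vertices; thus $\Gamma^{+}$ has the Seifert circles of $D$ as vertices, with one edge for each positive crossing of $D$. Let $D_{0}$ be the diagram obtained from $D$ by smoothing all negative crossings. From the local picture of the oriented (Seifert) smoothing, $D_{0}$ is the union of the Seifert circles of $D$ re-glued precisely along the positive crossings; hence two Seifert circles lie in the same connected component of $D_{0}$ if and only if the corresponding vertices lie in the same connected component of $\Gamma^{+}$. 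Therefore $O_{+}(D)$, which is by definition the number of connected components of $D_{0}$, equals the number of connected components of $\Gamma^{+}$. This correspondence — in particular the claim that smoothing the negative crossings neither merges Seifert circles except along surviving positive crossings nor creates extra circles — is the one point that needs a little care, but it is immediate once the smoothing is drawn locally.

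Next I would observe that a vertex of $\Gamma^{+}$ is isolated exactly when the corresponding Seifert circle is adjacent to no positive crossing, i.e.\ exactly when it is strongly negative; there are $O_{<}(D)$ such vertices, each of which is a connected component of $\Gamma^{+}$ by itself. Deleting these isolated vertices from $\Gamma^{+}$ is precisely the operation producing $S_{D}^{+}$ (remove the $-$-edges, then remove the vertices corresponding to strongly negative circles), and the remaining graph has $|S_{D}^{+}|$ components. Hence $\Gamma^{+}$ has $O_{<}(D)+|S_{D}^{+}|$ components, which together with the previous step yields
\[
O_{+}(D)=O_{<}(D)+|S_{D}^{+}|.
\]

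Finally, the second identity $O_{>}(D)+|S_{D}^{-}|=O_{-}(D)$ follows by the mirror-symmetric argument: apply the first identity to the mirror image $\overline{D}$, under which positive and negative crossings — and hence strongly positive and strongly negative circles — are interchanged, while the Seifert circles and the Seifert graph are unchanged up to relabelling of edges. (Equivalently, one repeats the above with $\Gamma^{-}$, the subgraph of $S_{D}$ keeping only the $-$-labelled edges, and with $D$ smoothed at all positive crossings.) The only genuine obstacle is the graph-theoretic bookkeeping in the first paragraph, namely verifying that connected components of the smoothed diagram match connected components of the positive-edge Seifert subgraph exactly, with the strongly negative circles accounting for its isolated vertices; everything else is a direct consequence of the definitions.
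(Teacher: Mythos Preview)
Your proposal is correct and follows essentially the same approach as the paper: both identify $O_{+}(D)$ with the number of connected components of the graph obtained from $S_{D}$ by deleting all $-$-labelled edges, then observe that the isolated vertices of this graph are exactly the strongly negative circles, so that removing them leaves $S_{D}^{+}$. Your write-up is more explicit about why components of the smoothed diagram correspond to components of the positive-edge subgraph, but the argument is the same.
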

\begin{proof}
From the definition, $O_{+}(D)$ is the number of the components of the graph obtained from $S_{D}$ by removing all the edges with the label $-$. It is equal to the number of the strongly negative circles of $D$ and the components of $S_{D}^{+}$. Hence we obtain the first equality. By the same discussion, we have the second one. 
\end{proof}
\begin{cor}\label{cor1}
For any diagram $D$, the graph  $S_{D}^{+}$ (resp.~$S_{D}^{-}$) is connected and not empty if and only if $O_{<}(D)+1=O_{+}(D)$ (resp.~$O_{>}(D)+1=O_{-}(D)$). 
\end{cor}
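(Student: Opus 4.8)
The statement to prove is Corollary~\ref{cor1}: for any oriented diagram $D$, the graph $S_{D}^{+}$ (resp. $S_{D}^{-}$) is connected and non-empty if and only if $O_{<}(D)+1 = O_{+}(D)$ (resp. $O_{>}(D)+1 = O_{-}(D)$).

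The plan is to deduce this directly from the preceding Lemma, which says $O_{<}(D) + |S_D^+| = O_+(D)$ and $O_>(D) + |S_D^-| = O_-(D)$. So $O_<(D)+1 = O_+(D)$ if and only if $|S_D^+| = 1$, and $|S_D^+| = 1$ exactly means $S_D^+$ has exactly one connected component — equivalently $S_D^+$ is connected and non-empty (by the convention that a graph with no vertices has $0$ components, a connected graph is one with exactly one component, so "exactly one component" = "connected and non-empty"). The same reasoning handles the parenthetical statement. The main (minor) obstacle is just being careful about the degenerate case: when $D$ is positive, $S_D^-$ is empty, $|S_D^-| = 0$, and indeed $O_>(D)+1 = O_-(D)$ fails since $O_>(D) = O_-(D)$ in that case (consistent with $S_D^-$ not being "connected and non-empty"). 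I should make sure the convention that the empty graph is not considered connected (or has zero components) is the one being used, which it clearly is given how $|S_D^\pm|$ counts components.

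Here is the proof.

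\begin{proof}[Proof of Corollary~\ref{cor1}]
By the previous Lemma, $|S_{D}^{+}| = O_{+}(D) - O_{<}(D)$. Hence $O_{<}(D)+1 = O_{+}(D)$ if and only if $|S_{D}^{+}| = 1$. Since $|S_{D}^{+}|$ denotes the number of connected components of $S_{D}^{+}$, the condition $|S_{D}^{+}| = 1$ holds precisely when $S_{D}^{+}$ is connected and non-empty. This proves the first equivalence. The second equivalence follows in the same way from the equality $|S_{D}^{-}| = O_{-}(D) - O_{>}(D)$.
\end{proof}
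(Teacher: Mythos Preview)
Your proof is correct and takes essentially the same approach as the paper, which states the corollary without proof as an immediate consequence of the preceding Lemma. Your argument—rewriting the Lemma's identity as $|S_D^+| = O_+(D) - O_<(D)$ and observing that $|S_D^+| = 1$ precisely means $S_D^+$ is connected and non-empty—is exactly the intended one-line deduction.
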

\begin{rem}\label{rem1}
From Theorems~$\ref{theorem:Rasmussen-homog}$ and $\ref{lem:Rasmussen-homog}$, for a link diagram $D$, the lower bound and the upper bound of Kawamura-Lobb's inequality are equal if and only if $D$ is homogeneous. 
On the other hand, from Corollary~$\ref{cor1}$,  the lower bound and the upper bound of Kawamura's inequality are equal if and only if $D$ is homogeneous, and $S_{D}^{+}$ and $S_{D}^{-}$ are connected and non-empty. 
Such a diagram has a $*$-product decomposition whose factors are one positive diagram and one negative diagram. 
In \cite[Remark I.26]{lewark}, Lewark called such a diagram {\it good diagram}. 
\end{rem}
\section{The $s$-invariants of strongly quasipositive links}\label{sec:strong-quasi}
In this section, we give a computation of the $s$-invariant of strongly quasipositive links. 
Recall that, for $n \in \mathbb{Z}_{>0}$, the \textit{$n$-braid group} $B_n$, is a group which has the following 
presentation. 
\begin{equation*}
\left< \sigma_1,  \sigma_2, \dots ,  \sigma_{n-1}  \left| 
\begin{array}{cc}
\sigma_t \sigma_s = \sigma_s \sigma_t & (|t-s| >1)\\ 
\sigma_t \sigma_s \sigma_t = \sigma_s \sigma_t \sigma_s & (|t-s| =1)
\end{array}
\right\rangle \right. .
\end{equation*}
Rudolph introduced the concept of a strongly quasipositive  link
(see \cite{Rudolph4}) as follows: 
For $0<i\leq j-1<n$, we define {\it positive embedded band} $\sigma_{i,j}$ as 
\[ \sigma_{i,j} :=(\sigma_{i}, \cdots, \sigma_{j-2})(\sigma_{j-1})
(\sigma_{i}, \cdots, \sigma_{j-2})^{-1}, 
\]
and 
\[ \sigma_{j-1,j} :=\sigma_{j-1}. 
\] 
A link is \textit{strongly quasipositive} 
if it is represented by the closure of a braid of the form
\[ \beta =\prod_{k=1}^{m} \sigma_{i_{k},j_{k}}.\]
Let $L$ be a strongly quasipositive link represented by the closure of $\beta$. 
Then $L$  bounds a surface $F$ in $S^3$
naturally, called a \textit{quasipositive surface} (see Figure~$\ref{positive_surface}$). 
The Euler characteristic $\chi(F)$ of the surface is equal to 
$n-m$, where $n$ is the number of strands of $\beta$, and $m$ is the number of the positive embedded bands in $\beta$. 
\begin{figure}[htbp]
\begin{center}
\includegraphics[scale=0.5]{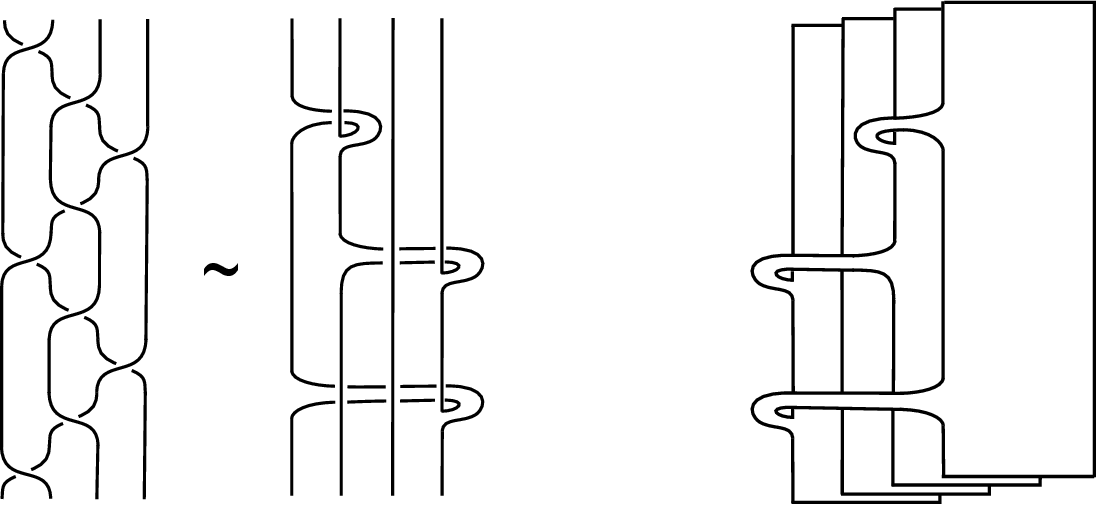}
\end{center}
\caption{An example of a quasipositive surface. The closure of $\sigma_{1}\sigma_{2,4}\sigma_{1,4}$ bounds the right quasipositive surface.} 
\label{positive_surface}
\end{figure}
\par
For a strongly quasipositive knot $K$, Livingston \cite{Liv1} and Shumakovitch \cite{Shumakovitch} proved that 
\begin{align*}
\tau(K)=s(K)/2=g_{*}(K)=g(K)=g(F), 
\end{align*}
where $\tau(K)$ is the Ozsv{\' a}th-Szab{\' o}'s $\tau$-invariant of $K$ (see \cite{tau-inv1} and \cite{rasmussen-thesis}) and $F$ is a quasipositive surface for $K$.
These results are easily generalized to the $s$-invariant for links.
\begin{thm}[\cite{Liv1}]  \label{theorem:positivity}
Let $L$ be a non-split strongly quasipositive link with $\sharp L$ components.
Then 
\begin{align*}
s(L) =2g_{*}(L)+\sharp L-1=2g(L)+\sharp L-1=1-\chi(F),
\end{align*}
where $F$ is a quasipositive surface bounded by $L$. 
\end{thm}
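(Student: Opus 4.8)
The plan is to establish the chain of (in)equalities
\[
2g(L)+(\sharp L-1)\ \le\ 2g_{*}(L)+(\sharp L-1)\ \le\ s(L)\ \le\ -\chi(F)+1\ \le\ 2g(L)+(\sharp L-1),
\]
which forces all of them to be equalities. The first inequality is immediate since any Seifert surface in $S^{3}$ gives a surface in $B^{4}$, so $g_{*}(L)\le g(L)$; together with the standard relation $-\chi(F')+1=2g(F')+(\sharp L-1)$ for a connected surface $F'$ with $\sharp L$ boundary components, this reads off at the level of Euler characteristics. The last inequality is the observation that a quasipositive surface $F$ for $L$ is in particular a Seifert surface of $L$, and since one may take $F$ to be connected (the closure of $\beta$ is a non-split link, so the quasipositive surface built from the bands can be assumed connected), we get $g(L)\le g(F)$, i.e. $-\chi(F)+1\ge 2g(L)+(\sharp L-1)$ is actually the wrong direction—so instead I will use $-\chi(F)+1\le 2g(F)+(\sharp L-1)$ is an equality for connected $F$ and $g(L)\le g(F)$ gives $2g(L)+(\sharp L-1)\le -\chi(F)+1$; combined with the top of the chain this pins everything, provided the middle two inequalities hold.

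The genuine content is the middle: $2g_{*}(L)+(\sharp L-1)\le s(L)$ and $s(L)\le -\chi(F)+1$. For the upper bound $s(L)\le -\chi(F)+1$, I would apply Theorem~\ref{fact:1.1} to the cobordism obtained by pushing the quasipositive surface $F$ into $B^{4}$: this is a cobordism $S$ from the empty link to $L$... more precisely, delete a disk to view $F$ as a cobordism from the unknot to $L$, or simply invoke that $s(\text{unknot})=0$ and $\chi(F\setminus \text{disk})=\chi(F)-1$, whence $0+\chi(F)-1\le s(L)$—again the wrong sign. The correct route: I would present $F$ explicitly from the braid word $\beta=\prod_{k=1}^{m}\sigma_{i_k,j_k}$, realize $L$ as obtained from the $\sharp L$-component unlink by $m-(n-\sharp L)$... in fact the cleanest path is Lemma~\ref{lem:1}-style: the closure of a positive braid has a connected positive diagram $D$ with $c(D)=m$ crossings and $O(D)=n$ Seifert circles, but a strongly quasipositive braid is not positive, so instead I would use that $L$ bounds $F$ in $S^{3}$ and apply Theorem~\ref{fact:1.1} together with the Bennequin-type bound. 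The honest statement is: by Theorem~\ref{fact:1.1} applied to the trace of the quasipositive surface, $s(L)\le 1-\chi(F)$, and separately by the lower bound $s(L)\ge 2g_{*}(L)+(\sharp L-1)$ which follows from applying Theorem~\ref{fact:1.1} to a minimal-genus surface in $B^{4}$.

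So concretely: \textbf{(Step 1)} Lower bound. Let $\Sigma\subset B^{4}$ be a smooth connected oriented surface bounded by $L$ of genus $g_{*}(L)$; removing a disk and applying Theorem~\ref{fact:1.1} to this cobordism from the unknot $U$ (with $s(U)=0$) to $L$ gives $\chi(\Sigma)-1\le s(L)$, i.e. $-(2g_{*}(L)+(\sharp L-1))\cdot(-1)$... rearranging $\chi(\Sigma)=2-2g_{*}(L)-\sharp L$ yields $s(L)\ge 1-\chi(\Sigma')=2g_{*}(L)+(\sharp L -1)$ where $\Sigma'$ is $\Sigma$ with an extra disk; the sign bookkeeping here is the one routine point to get right. \textbf{(Step 2)} Upper bound. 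Push the quasipositive surface $F$ (connected, since $L$ is non-split) into $B^{4}$; Theorem~\ref{fact:1.1} applied to the reversed cobordism, from $L$ to $U$, gives $s(L)+\chi(F\setminus D^{2})\le s(U)=0$, hence $s(L)\le 1-\chi(F)=2g(F)+(\sharp L-1)$. \textbf{(Step 3)} Since $F$ is a Seifert surface, $g(L)\le g(F)$, while a minimal genus Seifert surface has genus $g(L)\ge g_{*}(L)$; chaining Steps 1–3 gives $2g_{*}(L)+(\sharp L-1)\le s(L)\le 2g(F)+(\sharp L-1)$ and $2g_{*}(L)+(\sharp L-1)\ge 2g(L)+(\sharp L-1)\ge 2g_{*}(L)+(\sharp L-1)$ is false unless $g(L)=g_{*}(L)$—so in fact the squeeze forces $g_{*}(L)=g(L)=g(F)$ and $s(L)=2g(L)+(\sharp L-1)=-\chi(F)+1$, completing the proof. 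The main obstacle I anticipate is not any deep step but rather bookkeeping the orientation of the cobordisms so that Theorem~\ref{fact:1.1} applies with "every component of $S$ has boundary in $L$" (for the lower bound) versus in $L'$ (for the upper bound); this is exactly the asymmetry exploited in Corollary~\ref{coro:1.2}, and I would handle it by connecting a small negative trefoil or by the band-surgery argument of Livingston as in the proof of that corollary if the direct cobordism does not have the right component structure.
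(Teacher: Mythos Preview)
Your Step~1 contains a genuine error that cannot be repaired within your framework. When you remove a disk from a minimal-genus slice surface $\Sigma$ and view it as a cobordism $S$ from the unknot $U$ to $L$, Theorem~\ref{fact:1.1} gives $s(U)+\chi(S)\le s(L)$, i.e. $\chi(\Sigma)-1\le s(L)$. Since $\chi(\Sigma)=2-2g_{*}(L)-\sharp L$, this says
\[
s(L)\ \ge\ 1-2g_{*}(L)-\sharp L\ =\ -\bigl(2g_{*}(L)+(\sharp L-1)\bigr),
\]
not $s(L)\ge 2g_{*}(L)+(\sharp L-1)$ as you claim. Your ``rearranging'' step silently flips a sign. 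The inequality you want, $s(L)\ge 2g_{*}(L)+(\sharp L-1)$, is \emph{false for general links} (take any nontrivial negative link), so no amount of orientation bookkeeping or trefoil-summing in the style of Corollary~\ref{coro:1.2} will extract it from Theorem~\ref{fact:1.1} and a generic slice surface alone.

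The paper's proof obtains the lower bound differently: it applies the Kawamura--Lobb inequality (Theorem~\ref{them:kawaumura}) to the specific diagram $D$ given by the closure of the strongly quasipositive braid $\beta=\prod_{k=1}^{m}\sigma_{i_k,j_k}$ on $n$ strands, computing $w(D)-O(D)+1+2(O_{+}(D)-1)=m-n+1=-\chi(F)+1$. This uses the diagrammatic structure of strong quasipositivity in an essential way; it is exactly the ingredient your proposal is missing. Your Step~2 (the upper bound $s(L)\le -\chi(F)+1$ via the quasipositive surface and Theorem~\ref{fact:1.1}) is correct and is what the paper does as well.
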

\begin{rem}
In general, Theorem~$\ref{theorem:positivity}$ does not hold for split links. 
In fact, if $L$ is $2$-component unlink, $s(L)=-1$ and $2g_{*}(L)+\sharp L-1=1$. 
\end{rem}
\begin{rem}\label{rem:quasi1}
A link is \textit{quasipositive} 
if it is the closure of a braid of the form
\[ \beta =\prod_{k=1}^{m} \omega_{k} \sigma_{i_{k}} \omega_{k}^{-1},\]
where $\omega_{k}$ is a word in $B_{n}$.
Let $K$ be a quasipositive knot. 
Then $\tau(K)=s(K)/2=g_{*}(K)$.
This is due to Plamenevskaya \cite{Plamenevskaya2}
 and Hedden \cite{Hedden2}
for $\tau$, and
Plamenevskaya \cite{Plamenevskaya} and Shumakovitch \cite{Shumakovitch} for $s$. 
By the same discussion, we obtain the following: 
Let $L$ be a quasipositive link with $\sharp L$ components. Then we obtain $s(L) = 2g_{*}(L)+\sharp L-1$. 
\end{rem}
%
%
\section{Characterization of positive links}\label{sec:positive}
In this section, we prove characterizations of positive links. 
\par
\begin{lem}  \label{lem:positivity}
Let $D$ be a connected reduced homogeneous diagram of a link $L$ with $\sharp L$ components.
If $s(L)=2g(L)+\sharp L-1$, then 
$D$ has no negative crossings. 
\end{lem}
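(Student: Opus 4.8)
The plan is to encode the hypothesis $s(L)=2g(L)+\sharp L-1$ as the single numerical identity $O_{+}(D)=c_{-}(D)+1$ (where $c_{-}(D)$ denotes the number of negative crossings of $D$), and then to show that this identity is incompatible with $D$ being reduced unless $c_{-}(D)=0$.

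First, since $D$ is connected and homogeneous, Theorem~\ref{theorem:Rasmussen-homog} gives $L(D)=U(D)$, so Kawamura--Lobb's inequality (Theorem~\ref{them:kawaumura}) forces
\[ s(L)=L(D)=w(D)-O(D)+1+2(O_{+}(D)-1). \]
Next, let $F$ be the Seifert surface obtained from $D$ by Seifert's algorithm. As $D$ is connected, $F$ is a connected surface with $\sharp L$ boundary circles and $\chi(F)=O(D)-c(D)$, hence $2g(F)+\sharp L-1=1-\chi(F)=c(D)-O(D)+1$. The crucial input now is Cromwell's theorem~\cite{homogeneous}: a homogeneous diagram is genus-minimizing, so $g(L)=g(F)$ and therefore $2g(L)+\sharp L-1=c(D)-O(D)+1$. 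Combining this with the displayed formula and the hypothesis, and writing $w(D)=c_{+}(D)-c_{-}(D)$ and $c(D)=c_{+}(D)+c_{-}(D)$, a short computation yields
\[ O_{+}(D)=c_{-}(D)+1. \]

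Finally, suppose for contradiction that $c_{-}(D)\geq 1$, and list the negative crossings $c_{1},\dots,c_{c_{-}(D)}$ of $D$. Smoothing them one at a time produces a sequence of diagrams $D=D_{0},D_{1},\dots,D_{c_{-}(D)}$, where $D_{c_{-}(D)}$ is the diagram obtained from $D$ by smoothing all negative crossings; thus $\#D_{c_{-}(D)}=O_{+}(D)=c_{-}(D)+1$, while $\#D_{0}=1$ because $D$ is connected. Since every crossing of a diagram lies in one connected piece of it, smoothing a single crossing changes $\#$ by $0$ or $1$, and by Lemma~\ref{lem:3} the change equals $1$ exactly when that crossing is nugatory in the diagram at that stage. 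As the total change $\#D_{c_{-}(D)}-\#D_{0}=c_{-}(D)$ equals the number of smoothings, every smoothing must increase $\#$; in particular $c_{1}$ is nugatory in $D_{0}=D$, contradicting that $D$ is reduced. Hence $c_{-}(D)=0$, i.e.\ $D$ has no negative crossings.

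The step I expect to be the main obstacle is the middle one: one needs the \emph{exact} equality $2g(L)+\sharp L-1=c(D)-O(D)+1$, not merely the inequality ``$\leq$'' that holds for any connected diagram. This is precisely where homogeneity enters a second time, via Cromwell's result that Seifert's algorithm on a homogeneous diagram produces a minimal-genus surface; without it the chain of inequalities only yields $O_{+}(D)\leq c_{-}(D)+1$, which is automatic and gives no contradiction.
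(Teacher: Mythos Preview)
Your proof is correct and follows essentially the same route as the paper's: both compute $s(L)$ via Theorem~\ref{theorem:Rasmussen-homog}, invoke Cromwell's result that a homogeneous diagram realizes the genus to obtain $O_{+}(D)-1=c_{-}(D)$, and then derive a contradiction with reducedness. Your final step is in fact a more explicit version of what the paper does: the paper simply asserts that a non-nugatory negative crossing forces $O_{+}(D)-1<c_{-}(D)$, whereas you spell this out via the sequence $D_{0},\dots,D_{c_{-}(D)}$ and Lemma~\ref{lem:3}, which is the same mechanism used in the proof of Theorem~\ref{them:kawaumura}.
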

\begin{proof}
Let $D$ be a connected reduced homogeneous diagram of $L$.
Then the genus of $L$ is realized by 
the genus of the surface constructed 
by applying Seifert's algorithm to $D$ (see \cite{homogeneous}).
Therefore, we obtain 
\[ 2g(L)=2-\sharp L+c(D)-O(D),\]
where $c(D)$ denotes the number of crossings of $D$. 
By Theorem \ref{theorem:Rasmussen-homog}, we have
\[s(L)= w(D)-O(D)+2O_{+}(D)-1.\]
By the assumption, $s(L)=2g(L)+\sharp L-1$.
This implies that $O_{+}(D)-1=c_{-}(D)$,
where $c_{-}(D)$ denotes the number of negative crossings of $D$.
If there exists a non-nugatory negative crossing of $D$,
then $O_{+}(D) -1< c_{-}(D)$.
This contradicts the fact that $O_{+}(D)-1=c_{-}(D)$.
Therefore $D$ has no negative crossing.
\end{proof}
\begin{thm}[Theorem~\ref{theorem:positivity2}]
Let $L$ be a non-split link with $\sharp L$ components. Then $(1)$--$(4)$ are equivalent. \\
$(1)$ $L$ is positive.\\
$(2)$ $L$ is homogeneous and strongly quasipositive.\\
$(3)$ $L$ is homogeneous, quasipositive and $g_{*}(L)=g(L)$.\\
$(4)$ $L$ is homogeneous and $s(L)=2g_{*}(L)+\sharp L-1=2g(L)+\sharp L-1$.
\end{thm}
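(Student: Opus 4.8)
The plan is to establish the equivalence by a cycle of implications, leveraging the results already assembled in the earlier sections. The implication $(1)\Rightarrow(2)$ is essentially classical: positive links are homogeneous by definition of the homogeneous class (any positive diagram is its own $*$-product decomposition into special positive, hence special alternating, pieces), and positive links are strongly quasipositive by Nakamura and Rudolph as recalled in the introduction. For $(2)\Rightarrow(3)$, I would use Theorem~\ref{theorem:positivity}: a strongly quasipositive link is quasipositive and satisfies $s(L)=2g_{*}(L)+(\sharp L-1)=2g(L)+(\sharp L-1)$, which in particular forces $g_{*}(L)=g(L)$. For $(3)\Rightarrow(4)$, since $L$ is quasipositive we have $s(L)=2g_{*}(L)+(\sharp L-1)$ by Remark~\ref{rem:quasi1}, and combining with $g_{*}(L)=g(L)$ gives the stated double equality; homogeneity is carried along unchanged.

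The substantive implication is $(4)\Rightarrow(1)$, and this is where Lemma~\ref{lem:positivity} does the work. Starting from a homogeneous link $L$ with $s(L)=2g(L)+(\sharp L-1)$, I would choose a connected homogeneous diagram $D$ of $L$; by removing nugatory crossings (which changes neither the link, nor the homogeneity, nor connectedness, nor the genus-realizing property) I may assume $D$ is reduced. Then Lemma~\ref{lem:positivity} applies directly and tells us $D$ has no negative crossings, i.e. $D$ is a positive diagram, so $L$ is a positive link. One small point to check here is that reducing a homogeneous diagram keeps it homogeneous — this follows because deleting a nugatory crossing corresponds to deleting a trivial factor (a one-crossing special diagram) from the $*$-product decomposition, or can be seen directly at the level of Seifert circles.

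The main obstacle, and the only place where real content enters beyond bookkeeping, is the genus computation underlying Lemma~\ref{lem:positivity}: one needs Cromwell's theorem that a homogeneous diagram realizes the Seifert genus of its link, so that $2g(L)=2-\sharp L+c(D)-O(D)$ for a reduced homogeneous $D$. Granting that (it is quoted from \cite{homogeneous}), the rest of the argument in Lemma~\ref{lem:positivity} is the inequality $O_{+}(D)-1\le c_{-}(D)$ with equality forcing the absence of non-nugatory negative crossings, which is elementary from the structure of the Seifert graph: each negative crossing contributes an edge to $S_D^{+}$'s complement, and smoothing all of them can increase the component count by at most one per crossing, with a strict deficit as soon as some negative crossing is non-nugatory. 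So the proof is: verify $(1)\Rightarrow(2)\Rightarrow(3)\Rightarrow(4)$ by citing Theorems~\ref{theorem:positivity} and the quasipositivity remark, then close the cycle with $(4)\Rightarrow(1)$ via a reduced homogeneous diagram and Lemma~\ref{lem:positivity}.
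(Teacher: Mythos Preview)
Your proposal is correct and follows essentially the same cycle of implications as the paper's proof, citing the same ingredients (Nakamura--Rudolph for $(1)\Rightarrow(2)$, Theorem~\ref{theorem:positivity} for $(2)\Rightarrow(3)$, Remark~\ref{rem:quasi1} for $(3)\Rightarrow(4)$, and Lemma~\ref{lem:positivity} for $(4)\Rightarrow(1)$). You are in fact slightly more careful than the paper in explicitly noting that one must pass to a \emph{reduced} homogeneous diagram before invoking Lemma~\ref{lem:positivity}, and in justifying that this reduction preserves homogeneity.
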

\begin{proof}
$(1) \Rightarrow  (2)$ A positive link is strongly quasipositive
(see \cite{Nakamura} and \cite{Rudolph}) and homogeneous.\\
$(2) \Rightarrow  (3)$
If $L$ is strongly quasipositive, obviously $L$ is quasipositive. 
Moreover, from Theorem~$\ref{theorem:positivity}$, we have $g_{*}(L)=g(L)$. 
\\
$(3) \Rightarrow  (4)$ 
Since $L$ is a quasipositive link,
$s(L)=2g_{*}(L)+\sharp L-1$ (see Remark~$\ref{rem:quasi1}$).
By the assumption, $g_{*}(L)=g(L).$
Therefore $s(L)=2g_{*}(L)+\sharp L-1=2g(L)+\sharp L-1$.\\
$(4) \Rightarrow  (1)$
By Lemma \ref{lem:positivity}, 
a homogeneous diagram of $L$ with $s(L)=2g(L)+\#L-1$ is a positive diagram. 
\end{proof}
\begin{cor} \label{cor:}
Let $L$ be an alternating  link $L$ with $\sharp L$ components.
Then $L$ is positive if and only if
$s(L)=2g(L)+\sharp L-1$.
\end{cor}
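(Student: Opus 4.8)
The plan is to derive Corollary~\ref{cor:} directly from Theorem~\ref{theorem:positivity2} by showing that an alternating link is automatically homogeneous, so that condition $(4)$ of the theorem collapses to the single numerical condition $s(L)=2g(L)+\sharp L-1$ together with the (automatic) identity $g_*(L)=g(L)$ for alternating links. First I would recall that any reduced alternating diagram is, in the terminology of Section~\ref{sec:homogeneous}, a homogeneous diagram: its $*$-product decomposition into special, strongly prime factors has all factors special alternating (a special alternating diagram being precisely a positive or negative special diagram). Hence every alternating link is a homogeneous link, and condition $(4)$ of Theorem~\ref{theorem:positivity2} becomes ``$s(L)=2g_*(L)+(\sharp L-1)=2g(L)+(\sharp L-1)$''.

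Next I would handle the equality $g_*(L)=g(L)$ for alternating links, so that the two numerical equalities in $(4)$ reduce to the one stated in the corollary. In one direction $g_*(L)\le g(L)$ always holds, since a Seifert surface in $S^3$ can be pushed into $B^4$. For the reverse, I would invoke the fact that for an alternating link a reduced alternating diagram realizes the genus via Seifert's algorithm, combined with the lower bound on $g_*$ coming from the $s$-invariant: by Theorem~\ref{fact:1.1} (applied to a cobordism from $L$ to the unknot through a minimal-genus surface in $B^4$) one gets $s(L)\le 2g_*(L)+(\sharp L-1)$, and the computation of $s(L)$ for the homogeneous (alternating) diagram via Theorem~\ref{theorem:Rasmussen-homog} pins things down. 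Concretely, if $s(L)=2g(L)+\sharp L -1$ then $2g(L)+\sharp L-1 = s(L)\le 2g_*(L)+(\sharp L-1)\le 2g(L)+(\sharp L-1)$, forcing $g_*(L)=g(L)$; this simultaneously verifies all the equalities in $(4)$.

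Putting the pieces together: $(\Leftarrow)$ Assume $L$ is alternating with $s(L)=2g(L)+\sharp L-1$. Then $L$ is homogeneous, and by the squeeze above $g_*(L)=g(L)$, so condition $(4)$ of Theorem~\ref{theorem:positivity2} holds; hence $L$ is positive. $(\Rightarrow)$ If $L$ is positive then by Theorem~\ref{theorem:positivity2} (equivalence $(1)\Leftrightarrow(4)$) we have $s(L)=2g(L)+\sharp L-1$, with no alternating hypothesis needed. This establishes the biconditional.

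The main obstacle I anticipate is the bookkeeping needed to justify rigorously that ``reduced alternating $\Rightarrow$ homogeneous'' at the level of the $*$-product decomposition of Section~\ref{sec:homogeneous}, i.e. that every special strongly prime factor of a reduced alternating diagram is itself special alternating and therefore positive or negative; this is essentially Cromwell's observation but must be cited or argued carefully. A secondary subtlety is the non-split/reduced hypothesis: one should note that an alternating link with a connected reduced alternating diagram is non-split, so Theorem~\ref{theorem:positivity2} applies, and that passing to a reduced alternating diagram does not change the link. Neither point is deep, but both are where the proof must be precise rather than hand-wavy.
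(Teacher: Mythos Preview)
Your proof is correct and follows the same approach as the paper: cite Cromwell's observation that alternating diagrams are homogeneous, then invoke Theorem~\ref{theorem:positivity2}. Your squeeze argument for $g_*(L)=g(L)$ is fine but in fact unnecessary, since the implication $(4)\Rightarrow(1)$ in Theorem~\ref{theorem:positivity2} (via Lemma~\ref{lem:positivity}) only uses the equality $s(L)=2g(L)+\sharp L-1$; the paper's proof accordingly skips this step.
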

\begin{proof}
Cromwell \cite{homogeneous} showed that alternating link diagrams are homogeneous. 
From Theorem~$\ref{theorem:positivity2}$, an alternating link $L$ is positive if and only if $L$ satisfies $s(L)=2g(L)+\sharp L-1$. 
\end{proof}
The following was proved by Nakamura \cite{nakamura2}. 
\begin{cor}[{\cite{nakamura2}}] \label{cor:Nakamura}
Let $L$ be a positive and alternating link.
Then any reduced alternating diagram of $L$ is positive. 
\end{cor}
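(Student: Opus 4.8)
The plan is to deduce Corollary~\ref{cor:Nakamura} from Corollary~\ref{cor:} together with the special behaviour of reduced alternating diagrams. Let $L$ be a positive and alternating link, and let $D$ be any reduced alternating diagram of $L$. First I would recall that, by a theorem of Cromwell \cite{homogeneous}, every alternating diagram is homogeneous, so $D$ is a connected reduced homogeneous diagram of $L$ (after reducing nugatory crossings, which only removes kinks and changes nothing essential; in a reduced diagram there are none). Moreover, since $D$ is alternating and reduced, the surface produced by applying Seifert's algorithm to $D$ realises the genus $g(L)$ (this is exactly the input used in the proof of Lemma~\ref{lem:positivity}).

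Next I would invoke Corollary~\ref{cor:}: because $L$ is positive and alternating, we have $s(L)=2g(L)+\sharp L-1$. Now apply Lemma~\ref{lem:positivity} directly to the diagram $D$: it is a connected reduced homogeneous diagram of $L$, and the equality $s(L)=2g(L)+\sharp L-1$ holds, so the conclusion of the lemma tells us that $D$ has no negative crossings. A reduced alternating diagram with no negative crossings is a positive diagram, which is exactly the assertion. In fact Lemma~\ref{lem:positivity} is stated for a homogeneous diagram and outputs ``no negative crossings''; for an alternating diagram this forces all crossings positive, hence $D$ is a positive alternating diagram, as claimed.

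The main obstacle — really the only subtlety — is making sure the genus hypothesis in Lemma~\ref{lem:positivity} is met by the \emph{given} diagram $D$ rather than by some auxiliary one. This is handled by Cromwell's result that the Seifert-algorithm surface of a homogeneous (in particular, reduced alternating) diagram is of minimal genus, so that $2g(L)=2-\sharp L+c(D)-O(D)$ for our $D$; then Lemma~\ref{lem:positivity} applies verbatim. Everything else is a direct chaining of the already-established facts: Cromwell's homogeneity of alternating diagrams, Corollary~\ref{cor:} for the value of $s(L)$, and Lemma~\ref{lem:positivity} applied to $D$.

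So the argument is short: $D$ reduced alternating $\Rightarrow$ $D$ homogeneous (Cromwell) and of minimal Seifert genus; $L$ positive and alternating $\Rightarrow$ $s(L)=2g(L)+\sharp L-1$ (Corollary~\ref{cor:}); apply Lemma~\ref{lem:positivity} to $D$ to conclude $D$ has no negative crossings, i.e.\ $D$ is positive.
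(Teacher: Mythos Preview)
Your argument is correct and matches the paper's proof essentially line for line: homogeneity of reduced alternating diagrams (Cromwell), the equality $s(L)=2g(L)+\sharp L-1$ for positive $L$, and then Lemma~\ref{lem:positivity} applied to $D$. Your discussion of the minimal-genus property of $D$ is redundant, since that fact is already invoked inside the proof of Lemma~\ref{lem:positivity} itself and is not an extra hypothesis you need to verify.
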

\begin{proof}
It is known that a reduced alternating link diagram $D$ of $L$ are homogeneous. 
If $L$ is positive, we have $s(L)=2g(L)+\sharp L-1$. 
By Lemma~$\ref{lem:positivity}$, the diagram $D$ has no negative crossing, that is, $D$ is positive. 
\end{proof}
%
%
%
\section{The $s$-invariants of almost positive links}\label{sec:almost-positive-link}
In this section, we compute the $s$-invariants of almost positive links. 
\par
A diagram is {\it almost positive} if it has exactly one negative crossing. 
Then, we can see that an almost positive link is not positive and is represented by an almost positive diagram. 
\par
It is known that, for any link $L$, we obtain $s(L)\leq 2g_{*}(L)+\sharp L-1$. 
On the other hand, for an almost positive link diagram $D$ of a non-split link $L$, we can check $H_{Kh}^{0,j}(L)=0$ if $j<-O(D)+w(D)=2g(D)+\sharp L-4$, where $H_{Kh}^{i,j}(L)$ is the Khovanov homology of L \cite{khovanov1} and $g(D)$ is the genus of the Seifert surface obtained from $D$ by Seifert's algorithm. 
Hence, we obtain 
\begin{align*}
2g(D)+\sharp L-3 \leq s(L) &\leq 2g_{*}(L)+\sharp L-1\\
&\leq 2g(L)+\sharp L-1\\
&\leq 2g(D)+\sharp L-1. 
\end{align*}
Stoimenow proved that the three-genera of almost positive links are computed from their almost positive diagrams as follows. 
\begin{thm}[{\cite[Corollary~$5$ and the proof of Theorems~$5$ and $6$]{stoimenow1}}]\label{stoimenow1}
Let $D$ be an almost positive diagram of a non-split link $L$ with a negative crossing $p$. 
\begin{enumerate}
\item If there is no (positive) crossing joining the same two Seifert circles of $D$ as the circles which are connected by the negative crossing $p$, we have $g(L)=g(D)$ (see the left of Figure~$\ref{fig:negative}$). 
\item If there is a (positive) crossing joining the same two Seifert circles of $D$ as the circles which are connected by the negative crossing $p$, we have $g(L)=g(D)-1$ (see the right of Figure~$\ref{fig:negative}$). 
\end{enumerate}
\end{thm}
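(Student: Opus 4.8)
The plan is to analyze the Seifert circles adjacent to the negative crossing $p$ and compare the Seifert surface $F_D$ obtained from $D$ by Seifert's algorithm with a genus-minimizing surface, reducing everything to a computation with the diagram $D$ and an auxiliary positive diagram. First I would record the Seifert-graph picture: let $C_1$ and $C_2$ be the two (necessarily distinct, since $D$ is reduced) Seifert circles joined by $p$. Because $D$ has exactly one negative crossing, every other crossing of $D$ is positive, so the only obstruction to $F_D$ being of minimal genus comes from the single edge labeled $-$ in $S_D$. The dichotomy in the statement is exactly the dichotomy ``the edge $p$ is (is not) parallel in $S_D$ to a $+$-edge between $C_1$ and $C_2$.''

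In case (1), where no positive crossing joins $C_1$ and $C_2$, I would argue that $F_D$ already realizes the genus: one shows that the Seifert form, or more directly an argument of Stoimenow via the HOMFLY/Alexander polynomial leading coefficient (or via Gabai's sutured-manifold technology for homogeneous-type pieces), detects that $\chi(F_D)$ is maximal among Seifert surfaces. Concretely, smoothing $p$ and the parallel band structure shows $D$ is ``almost positive with an essential negative band,'' and the complement of $F_D$ is a product sutured manifold, so $g(L)=g(D)$. In case (2), where a positive crossing $q$ joins the same pair $C_1,C_2$, I would perform the explicit surface modification: the band of $p$ and the band of $q$ together bound a disk-with-two-bands region, and one can do a single band move (an ambient $1$-handle slide / ``deplumbing a Hopf band'') that cancels $p$ against $q$, producing a positive diagram $D'$ with $c(D')=c(D)-2$ and $O(D')=O(D)$, hence $\chi(F_{D'})=\chi(F_D)+2$, i.e.\ $g(D')=g(D)-1$. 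Since $D'$ is positive it is homogeneous, so $g(L)=g(D')=g(D)-1$; the reverse inequality $g(L)\le g(D)-1$ is the easy direction coming from exhibiting this smaller surface, and the inequality $g(L)\ge g(D)-1$ follows because $L$ differs from a positive (hence fibered-genus-realizing) link by at most the obvious bound, or again from the sutured-manifold/polynomial computation.

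The order I would carry this out: (i) set up the Seifert circles $C_1,C_2$ and the two cases; (ii) in case (2), give the explicit band move turning $D$ into a positive diagram $D'$ and compute $c(D'),O(D')$, concluding $g(D')=g(D)-1$ and $g(L)=g(D)-1$ by homogeneity (Theorem~\ref{theorem:Rasmussen-homog} machinery / \cite{homogeneous}); (iii) in case (1), prove $g(L)=g(D)$ by the incompressibility of $F_D$ — this is the step I import from Stoimenow \cite{stoimenow1}, whose proof uses that the reduced leading coefficient of the Conway/Alexander polynomial is $\pm1$ and appeals to a genus-detection criterion.

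\textbf{Main obstacle.} The hard part will be case (1): showing that the single negative band does \emph{not} lower the genus, i.e.\ that $F_D$ is still minimal-genus even though $D$ is not homogeneous. Unlike case (2), there is no elementary diagrammatic move to a nicer diagram; one genuinely needs an argument that the Seifert surface $F_D$ is incompressible, which is where Stoimenow's polynomial-coefficient computation (or an equivalent sutured-manifold decomposition, isolating the negative band as the core of an essential annulus) does the real work. Everything else — the bookkeeping of $c(D)$, $O(D)$, $w(D)$, and the reduction via homogeneity — is routine.
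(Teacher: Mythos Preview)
The paper does not contain a proof of this statement: Theorem~\ref{stoimenow1} is quoted from Stoimenow \cite{stoimenow1} (Corollary~5 and the proofs of Theorems~5 and~6 there) and is invoked as a black box in the proof of Theorem~\ref{link-rem}. There is therefore no ``paper's own proof'' to compare your proposal against.

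As a sketch of Stoimenow's argument, your outline is in the right spirit, and you have correctly isolated case~(1) as the genuine difficulty, where no diagrammatic simplification is available and one needs an honest genus--detection tool (Stoimenow uses the HOMFLY degree $\operatorname{maxdeg}_z P_L = 1-\chi(L)$). Your treatment of case~(2), however, conflates two different operations. An ambient handle slide on $F_D$ is an isotopy of the surface and preserves $\partial F_D = L$, but the result is not a priori the canonical Seifert surface of any diagram, let alone a positive one; so you do not get a positive diagram $D'$ of $L$ this way. Deplumbing a Hopf band, on the other hand, \emph{changes} the boundary link, so that route does not give a surface for $L$ at all. What you can legitimately extract from the band picture is a Seifert surface for $L$ of Euler characteristic $\chi(F_D)+2$, giving only the inequality $g(L)\le g(D)-1$. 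The matching lower bound $g(L)\ge g(D)-1$ is not ``the obvious bound'' from a crossing change (crossing changes do not control genus by~$1$ in general); it again requires a Bennequin--type or polynomial--degree argument, which is where Stoimenow's computation does the work in this case too.
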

\begin{figure}[!h]
\begin{center}
\includegraphics[scale=0.55]{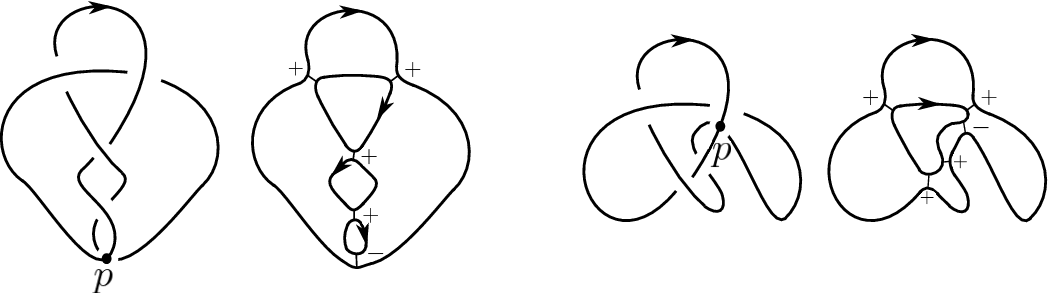}
\end{center}
\caption{In the left picture, there is no crossing joining the same two Seifert circles as the two circles which are connected by the negative crossing $p$. 
In the right picture, there is a crossing joining the same two Seifert circles as the two circles which are connected by the negative crossing $p$.}
\label{fig:negative}
\end{figure}
By the same discussion as \cite{tagami3}, we can compute the $s$-invariants of almost positive links as follows. 
\begin{thm}\label{link-rem}
Let $D$ be an almost positive diagram of a link $L$ with negative crossing $p$. 
\begin{enumerate}
\item If there is no crossing joining the same two Seifert circles of $D$ as the two circles which are connected by the negative crossing $p$, we obtain 
\begin{align*}
s(L) +1-\sharp L=&2g_{*}(L)=2g(L)=2g(D),  
\end{align*}
\item otherwise, we obtain 
\begin{align*}
s(L) +1-\sharp L=&2g_{*}(L)=2g(L)=2g(D)-2.  
\end{align*}
\end{enumerate}
\end{thm}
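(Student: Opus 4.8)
The plan is to combine three ingredients already available in the excerpt: the two-sided bounds on $s(L)$ that come from Kawamura--Lobb's inequality (Theorem~\ref{them:kawaumura}) together with Proposition~\ref{key}, the upper bound $s(L)\le 2g_*(L)+\sharp L-1\le 2g(L)+\sharp L-1$ coming from Theorem~\ref{fact:1.1}, and Stoimenow's genus computation (Theorem~\ref{stoimenow1}). Indeed, the chain of inequalities
\begin{align*}
2g(D)+\sharp L-3 \le s(L) &\le 2g_*(L)+\sharp L-1 \le 2g(L)+\sharp L-1 \le 2g(D)+\sharp L-1
\end{align*}
displayed just before the statement already pins $s(L)$ down to one of the two values $2g(D)+\sharp L-3$ or $2g(D)+\sharp L-1$ (the parity of $s(L)$ forces this, since intermediate integers of the wrong parity are excluded). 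So the entire content of the theorem is: in case (2) the $s$-invariant achieves the \emph{lower} extreme, while in case (1) it achieves the \emph{upper} extreme, and in each case $g_*(L)=g(L)$.

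First I would treat case (2), where a positive crossing joins the same two Seifert circles as the negative crossing $p$. Here the idea, following \cite{tagami3}, is to exhibit a cobordism (or equivalently a single band surgery) that simultaneously cancels the negative crossing $p$ against the parallel positive crossing; smoothing both of these two crossings on the Seifert surface of $D$ changes the Euler characteristic in a controlled way and produces a \emph{positive} diagram $D'$ with $c(D')=c(D)-2$ and $O(D')=O(D)$, or more precisely one for which Lemma~\ref{lem:1} applies. Combining Lemma~\ref{lem:1} for $D'$ with Theorem~\ref{fact:1.1} applied to the cobordism gives $s(L)\ge (\text{value for }D')+\chi(\text{cobordism})$, and a short computation shows this equals $2g(D)-2+\sharp L-1$, the lower extreme. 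Matching this with Stoimenow's $g(L)=g(D)-1$ and with the universal upper bound $s(L)\le 2g_*(L)+\sharp L-1\le 2g(L)+\sharp L-1=2g(D)-2+\sharp L-1$ forces all the displayed equalities, including $g_*(L)=g(L)=g(D)-1$.

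For case (1) — no positive crossing parallel to $p$ — the task is to show $s(L)$ hits the \emph{upper} extreme $2g(D)+\sharp L-3$. Now Stoimenow gives $g(L)=g(D)$, so the universal upper bound reads $s(L)\le 2g(D)+\sharp L-1$; but parity rules out $s(L)=2g(D)+\sharp L-2$, so we must rule out $s(L)=2g(D)+\sharp L-3$ being \emph{non-}maximal, i.e.\ we need the matching lower bound $s(L)\ge 2g(D)+\sharp L-3$, which we already have from the Khovanov-homology vanishing argument quoted before the statement. So in fact case (1) is essentially immediate once the displayed inequalities are in place: the two ends are $2g(D)+\sharp L-3$ and $2g(D)+\sharp L-1$, they differ by $2$, and $s(L)$ must be one of the two; I would still need to argue that it is the smaller one, which is exactly the Khovanov vanishing $H^{0,j}_{Kh}(L)=0$ for $j<2g(D)+\sharp L-4$ together with Proposition~\ref{key} giving $H^{0,s(L)-1}_{Kh}(L)\ne 0$, hence $s(L)-1\ge 2g(D)+\sharp L-4$, i.e. $s(L)\ge 2g(D)+\sharp L-3$. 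Thus $s(L)=2g(D)+\sharp L-3=2g(L)+\sharp L-3$, and then $2g_*(L)+\sharp L-1$ is squeezed to the same value, giving $g_*(L)=g(L)=g(D)$.

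The main obstacle I expect is the explicit cobordism construction in case (2): one must verify carefully that smoothing the negative crossing and a parallel positive crossing really yields a \emph{connected positive} diagram (so that Lemma~\ref{lem:1} applies verbatim) and correctly bookkeep the change in the number of Seifert circles versus the Euler characteristic of the cobordism, so that the arithmetic lands exactly on $2g(D)-2+\sharp L-1$ rather than being off by the linking-number/connectivity corrections. This is the step where the hypothesis on the Seifert circles (as opposed to just the existence of a negative crossing) is genuinely used, and where an argument in the spirit of Lemmas~\ref{lem:2}--\ref{lem:3} and of the proof of Theorem~\ref{them:kawaumura} — tracking nugatory crossings and connectedness through the smoothings — will be needed. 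Everything else is assembling already-proved inequalities and invoking Stoimenow's theorem and the parity of $s$.
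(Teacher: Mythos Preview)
Your case~(1) argument has a genuine gap and in fact reaches the wrong conclusion. You correctly identify at the outset that in case~(1) one must show $s(L)$ equals the \emph{upper} extreme $2g(D)+\sharp L-1$, but then your argument drifts: you write ``I would still need to argue that it is the smaller one,'' and you conclude $s(L)=2g(D)+\sharp L-3$, which would give $s(L)+1-\sharp L=2g(D)-2$, contradicting the statement. The Khovanov vanishing you invoke, $H^{0,j}_{Kh}(L)=0$ for $j<2g(D)+\sharp L-4$, only yields $s(L)-1\ge 2g(D)+\sharp L-4$, i.e.\ $s(L)\ge 2g(D)+\sharp L-3$; this is exactly the lower bound already displayed before the theorem and does nothing to exclude $s(L)=2g(D)+\sharp L-3$. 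What is actually needed is the \emph{stronger} vanishing at the borderline degree, namely $H^{0,\,2g(D)+\sharp L-4}_{Kh}(L)=0$, which holds precisely under the hypothesis of case~(1). This is Lemma~\ref{lem1} (taken from \cite[Lemma~3.4]{tagami3}), and it is the nontrivial input you are missing. With it, Proposition~\ref{key} gives $s(L)-1\ne 2g(D)+\sharp L-4$, hence $s(L)\ne 2g(D)+\sharp L-3$, forcing $s(L)=2g(D)+\sharp L-1$; then the chain $s(L)\le 2g_*(L)+\sharp L-1\le 2g(L)+\sharp L-1=2g(D)+\sharp L-1$ collapses to equalities.

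Your case~(2) is essentially fine but more laborious than necessary. The paper does not build a cobordism by smoothing two parallel crossings; it simply performs the crossing change at $p$ to obtain a positive diagram $D_{+}$ of a link $L_{+}$, uses Corollary~\ref{coro:1.2} to get $s(L)\ge s(L_{+})-2=2g(D)+\sharp L-3$, and then Stoimenow's $g(L)=g(D)-1$ makes the upper bound $2g(L)+\sharp L-1$ equal the same number. This sidesteps entirely the connectedness bookkeeping you flag as an obstacle. (Alternatively, the displayed inequality before the theorem already gives that lower bound via the generic Khovanov vanishing, so even the crossing change is not strictly needed here.)
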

\begin{proof}
Let $D_{+}$ be the positive diagram obtained from $D$ by the crossing change at $p$ and $L_{+}$ the link represented by $D_{+}$. 
By well known properties of the $s$-invariant, 
we obtain 
\begin{align}
s(L_{+})-2&\leq s(L)\leq s(L_{+}), \label{1}\\
|s(L)|&\leq 2g_{*}(L)+\sharp L-1\leq 2g(L)+\sharp L-1  \label{2}, \\
s(L_{+})+1-\sharp L&=2g_{*}(L_{+})=2g(L_{+})=2g(D_{+})(=2g(D)). \label{3}
\end{align}
\par
$(1)$ Suppose that there is no (positive) crossing joining the same two Seifert circles as the circles which are connected by the negative crossing $p$: 
By $(\ref{1})$, we can see that $s(L)=s(L_{+})$ or $s(L_{+})-2$. 
By Lemma~$\ref{lem1}$ below and $(\ref{3})$, we have $s(L)\neq 2g(D)+\sharp L-3=s(L_{+})-2$. Hence, we obtain $s(L)=s(L_{+})=2g(D)+\sharp L-1$. 
By $(\ref{2})$, we have 
\begin{align*}
2g(D)+\sharp L-1=s(L)\leq 2g_{*}(L)+\sharp L-1\leq 2g(L)+\sharp L-1\leq  2g(D)+\sharp L-1. 
\end{align*}
\par
$(2)$ Suppose that there is a (positive) crossing joining the same two Seifert circles as the circles which are connected by the negative crossing $p$: 
By Theorem~$\ref{stoimenow1}$, $(\ref{2})$ and $(\ref{3})$, we obtain 
\begin{align*}
2g(D)+\sharp L-3&=s(L_{+})-2\\
&\leq s(L)\\
&\leq 2g_{*}(L)+\sharp L-1\\
&\leq 2g(L)+\sharp L-1= 2g(D)+\sharp L-3. 
\end{align*}
\end{proof}
\begin{proof}[Proof of Corollary~$\ref{cor2}$]
By Theorem~$\ref{theorem:positivity2}$, a homogeneous link $L$ satisfying $s(L)=2g_{*}(L)+\sharp L-1=2g(L)+\sharp L-1$ is a positive link. 
By Theorem~$\ref{link-rem}$, all almost positive links satisfy $s(L)=2g_{*}(L)+\sharp L -1=2g(L)+\sharp L -1$. 
Hence any almost positive link is not homogeneous. 
\end{proof}
\begin{proof}[Proof of Theorem~$\ref{main2}$]
Theorem~$\ref{main2}$ follows from Theorems~$\ref{theorem:positivity}$ and $\ref{link-rem}$. 
\end{proof}
\begin{lem}[{\cite[Lemma 3.4]{tagami3}}]\label{lem1}
Let $D$ be an almost positive link diagram of a non-split link $L$ with a negative crossing $p$. 
If there is no (positive) crossing of $D$ joining the same two Seifert circles as the circles which are connected by the negative crossing $p$, 
we have $H_{Kh}^{0, 2g(D)+\sharp L-4}(L)=0$, where $H_{Kh}^{i,j}(L)$ is the Khovanov homology of L and $\sharp L$ is the number of the components of $L$. 
\end{lem}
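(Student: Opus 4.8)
The plan is to prove Lemma~\ref{lem1} by a direct analysis of the Khovanov chain complex in homological degree $0$, following the strategy of \cite[Lemma 3.4]{tagami3}. Let $D$ be the given almost positive diagram with its unique negative crossing $p$, and write $c(D)=n$, $n_{-}=1$, $n_{+}=n-1$. After the grading shift, the chain group $C_{Kh}^{0}(D)$ corresponds to the unnormalized group $C^{1}(D)=\bigoplus_{|\varepsilon|=1}M_{\varepsilon}\{1\}$, and the $q$-grading $j=2g(D)+\sharp L-4$ translates, via $j=j'-n_{+}+2n_{-}$, into a specific lowest unnormalized $q$-grading. The key geometric observation is that the all-$0$ smoothing $D_{\bm 0}$ is exactly the oriented (Seifert) resolution of $D$ at the positive crossings together with the $0$-smoothing at $p$; so $D_{\bm 0}$ has $O(D)$ circles, and the Seifert genus formula gives $w(D)-O(D)=2g(D)+\sharp L-2-2$, i.e. the lowest $q$-grading appearing in $C_{Kh}^{0,j}(D)$ with $j=2g(D)+\sharp L-4$ sits at the bottom of the contributions from the weight-$1$ smoothings near $D_{\bm 0}$.

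The main step is to identify which weight-$1$ smoothings $D_{\varepsilon}$ ($|\varepsilon|=1$) contribute to $C_{Kh}^{0,j}(D)$ at $j=2g(D)+\sharp L-4$ and to show the differential restricted to that subspace is injective. First I would observe that because $p$ is the only negative crossing, the canonical generator $\mathfrak{s}_{\mathfrak{o}}$ associated with the given orientation lives in $M_{\varepsilon_{\mathfrak o}}$ where $\varepsilon_{\mathfrak o}$ has a single $1$ (at $p$); this is the unique smoothing contributing the relevant bottom $q$-grading whose neighbours we must control. The hypothesis ``no positive crossing joins the same two Seifert circles as $p$'' is used precisely here: it guarantees that the two Seifert circles merged/split at $p$ are joined by no other crossing, which controls how many circles $D_{\varepsilon_{\mathfrak o}}$ has relative to $D_{\bm 0}$ and hence pins down the $q$-grading bookkeeping; without it the count shifts by one (which is exactly why case (2) of Theorem~\ref{link-rem} behaves differently). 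Then I would write down the differential $d^{0}_{Kh}$ out of the minimal-grading part of $C_{Kh}^{0,j}(D)$ — this is a sum of comultiplications $\Delta_{Kh}$ and multiplications $m_{Kh}$ over the $n-1$ positive crossings adjacent to $D_{\varepsilon_{\mathfrak o}}$ — and check that a nonzero cycle cannot exist, either by exhibiting that the relevant homology group is the homology of a Koszul-type complex that vanishes in this corner, or by a direct ``lowest-weight'' argument: any cycle in the bottom $q$-grading of $C_{Kh}^{0}$ would have to be a multiple of the all-$X$ labelling, and applying $\Delta_{Kh}$ (which sends $X\mapsto X\otimes X$, never to zero, along a splitting edge) or $m_{Kh}(X\otimes X)=0$ along merging edges shows the only candidate cycle is killed.

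Concretely, the cleanest route is to reduce to a statement about the reduced/unreduced Khovanov complex of the link obtained by $0$-smoothing $p$: that link is positive, connected, with diagram $D_{p}$, and its Khovanov homology in homological degree $0$ and the extreme $q$-grading is known to be one-dimensional (generated by the oriented resolution with all $X$'s). The class we want to kill in $H^{0,j}_{Kh}(D)$ at $j=2g(D)+\sharp L-4$ comes, through the long exact sequence (skein exact sequence) associated to resolving the crossing $p$, from a connecting map whose injectivity or surjectivity on the relevant graded piece I would verify using the positivity of $D_{p}$ and the above one-dimensionality. So the ordered steps are: (i) set up the grading conventions and compute $w(D)-O(D)$ from the Seifert-genus formula for homogeneous (here almost positive, hence ``positive after one change'') diagrams; (ii) locate the minimal nonzero $q$-grading in $C^{0}_{Kh}(D)$ and show it equals $2g(D)+\sharp L-4$ under the no-parallel-crossing hypothesis; (iii) invoke the skein long exact sequence for $p$ and the known extremal Khovanov homology of the positive diagram $D_{p}$; (iv) conclude the connecting homomorphism forces $H^{0,2g(D)+\sharp L-4}_{Kh}(L)=0$.

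The hard part will be step (ii)–(iii): carefully tracking the two distinct grading shifts (the homological shift by $n_{-}=1$ and the $q$-shift by $-n_{+}+2n_{-}$) through the skein exact sequence so that the ``positive diagram $D_{p}$'' contribution lands in exactly the degree we need, and then making the no-parallel-Seifert-circles hypothesis do its job of ruling out an extra circle (equivalently, an extra factor of $V$) that would otherwise lower the minimal $q$-grading by $2$ and destroy the vanishing. Everything else is bookkeeping with $\Delta_{Kh}$, $m_{Kh}$, and the standard fact that a connected positive diagram represents a non-split link with the expected extremal Khovanov homology.
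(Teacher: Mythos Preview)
The paper does not give its own proof of this lemma; it is simply cited from \cite{tagami3}. So there is no in-paper argument to compare against, and I will evaluate your sketch on its own merits.

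Your general strategy---analyse the Khovanov chain groups directly in homological degree $0$ at the extremal $q$-grading---is the right one, and your translation $H_{Kh}^{0,\,2g(D)+\sharp L-4}(D)=H^{1,\,1-O(D)}(D)$ is correct. But several of the concrete claims are wrong and, more importantly, you have the logical direction of the argument backwards.

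First, the all-$0$ smoothing $D_{\bm 0}$ does \emph{not} have $O(D)$ circles: the $0$-smoothing at the negative crossing $p$ is the non-Seifert smoothing, so $D_{\bm 0}$ is the Seifert resolution with the two Seifert circles $C_{1},C_{2}$ touched by $p$ merged into one; hence $k_{\bm 0}=O(D)-1$. The Seifert resolution itself is $\varepsilon_{\mathfrak o}$ (the unique weight-$1$ vertex with the $1$ at $p$), and it has $O(D)$ circles. Once this is fixed, the role of the hypothesis becomes clear: for any other weight-$1$ vertex $\varepsilon$ (with the $1$ at a positive crossing $q$), one has $k_{\varepsilon}=O(D)-1\pm 1$, and the ``$+1$'' (splitting) case occurs exactly when $q$ joins the same pair $\{C_{1},C_{2}\}$ as $p$. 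The hypothesis rules this out, so $C^{1,\,1-O(D)}(D)$ is one-dimensional, spanned by the all-$X$ element $X^{\otimes O(D)}\in M_{\varepsilon_{\mathfrak o}}$.

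Second, and this is the real gap: that element \emph{is} a cycle. Every edge out of $\varepsilon_{\mathfrak o}$ (changing a positive crossing from $0$ to $1$) is a merge of two distinct Seifert circles, so $m_{Kh}(X\otimes X)=0$ on each component and $d^{1}(X^{\otimes O(D)})=0$. Thus ``showing the differential is injective on that subspace'' cannot work. What you must show instead is that this cycle is a \emph{boundary}: the all-$X$ element $X^{\otimes(O(D)-1)}\in M_{\bm 0}$ lies in $C^{0,\,1-O(D)}(D)$, the edge $\bm 0\to\varepsilon_{\mathfrak o}$ is a split, so $\Delta_{Kh}(X)=X\otimes X$ gives $\pm X^{\otimes O(D)}$ there, and the hypothesis again guarantees that every other edge out of $\bm 0$ is a merge (hence contributes $0$). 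Therefore $d^{0}(X^{\otimes(O(D)-1)})=\pm X^{\otimes O(D)}$ and $H^{1,\,1-O(D)}(D)=0$.

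Your proposed detour through the skein long exact sequence at $p$ and the extremal Khovanov homology of the positive diagram $D_{p}$ can be made to work, but it is strictly more bookkeeping than the two-paragraph chain-level computation above, and your outline does not identify which map in the triangle must be checked (it is surjectivity of the connecting map into the one-dimensional target, i.e.\ exactly the boundary statement just described).
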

%

\section{Strong quasipositivities of almost positive knots with up to $12$ crossings}\label{appendix}
In order to present evidence towards an affirmative answer to Stoimenow's question (Question~$\ref{conj:stoimenow}$), 
in this section, we check the strong quasipositivities of almost positive knots with up to $12$ crossings. 
In Subsection~$\ref{sec:table-positive}$, we find all knots which are or may be almost positive with up to $12$ crossings. 
In Subsection~$\ref{sec:table2}$, we check the strong quasipositivities of these knots. 
\subsection{The positivities and almost positivities of knots up to $12$ crossings}\label{sec:table-positive}
In this subsection, we consider the positivities and almost positivities of knots with up to $12$ crossings. 
Here, we call a knot positive if the knot or the mirror image of the knot has a positive diagram. 
By using Proposition~$\ref{prop:almost}$, Theorems~$\ref{theorem:positivity2}$, $\ref{almost2}$--$\ref{thm:almost3}$ and $\ref{homo}$--$\ref{kh-thick}$, and Lemma~$\ref{n638}$ below,   
we can determine the positivities and almost positivities of knots with up to $12$ crossings 
except for $12_{n148}$, $12_{n276}$, $12_{n329}$, $12_{n366}$, $12_{n402}$, $12_{n528}$ and $12_{n660}$, which have almost positive diagrams 
(here we used KnotInfo \cite{knot_info} due to Cha and Livingston, and the Mathematica Package ``KnotTheory"\cite{Bar-Natan-2}). 
See Table~$\ref{table1}$. 
\begin{thm}[{\cite[Corollary~$1.7$]{almostpositive}}, {\cite[Corollary~$6.1$]{almost_stoimenow}}]\label{almost2}
Nontrivial almost positive links have negative signature.
\end{thm}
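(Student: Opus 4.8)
The plan is to reduce the statement to the case of positive links (Theorem \ref{almost4}), using the Seifert-algorithm genus computation for almost positive diagrams (Theorem \ref{stoimenow1}) together with the crossing-change trick that relates an almost positive link to a positive one. First I would take an almost positive diagram $D$ of the nontrivial link $L$ with its unique negative crossing $p$, and let $D_{+}$ be the positive diagram obtained by changing $p$ to a positive crossing, with $L_{+}$ the corresponding positive link. Since a single crossing change from negative to positive is realized by two band surgeries, the signatures of $L$ and $L_{+}$ differ in a controlled way; more precisely one has $\sigma(L_{+}) \le \sigma(L) \le \sigma(L_{+}) + 2$ by the standard skein inequality for the (Murasugi--Tristram) signature. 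So if $\sigma(L_{+}) \le -2$ we are immediately done, and the only dangerous case is $\sigma(L_{+}) \in \{0, -1\}$, forcing $\sigma(L)$ into a small range near $0$.

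Next I would rule out the dangerous case by a genus/nullity argument. The positive diagram $D_{+}$ realizes the Seifert genus of $L_{+}$ by Cromwell's theorem (as invoked in the proof of Lemma \ref{lem:positivity}), so $L_{+}$ is fibered-like enough that $|\sigma(L_{+})|$ is comparable to $2g(L_{+})$; in fact, for a nontrivial positive link Theorem \ref{almost4} gives $\sigma(L_{+}) < 0$, and one expects a quantitative lower bound on $|\sigma(L_{+})|$ in terms of the number of crossings of $D_{+}$, which is large unless $D$ is a very small diagram. The key numerical input is Theorem \ref{stoimenow1}: $g(L) = g(D)$ or $g(D)-1$, so $L$ has genus bounded below once $D$ has enough crossings, and combining this with the bound $|\sigma(L)| \le 2g(L) + \sharp L - 1$ available from the $s$-invariant estimates in Section \ref{sec:almost-positive-link} constrains which cases can actually occur. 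I would then treat the finitely many remaining small almost positive diagrams by hand (or cite that such diagrams represent only the unknot, split links, or already-classified positive links), checking directly that they are excluded by the nontriviality and almost-positivity hypotheses.

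Alternatively — and this is likely the cleaner route — I would mirror Przytycki--Taniyama's original argument (reference \cite{almostpositive}) for positive links directly in the almost positive setting: one shows that the Goeritz or Seifert form of an almost positive diagram, after accounting for the single negative crossing, is still "mostly positive definite up to a rank-one correction," which forces the signature to be strictly negative once the diagram is reduced and the link is nontrivial. The main obstacle in either approach is the borderline case where the unique negative crossing interacts with the Seifert circles in the "type (2)" configuration of Theorem \ref{stoimenow1} (the right picture of Figure \ref{fig:negative}), since there the genus drops by one and the naive signature estimate is tight; handling this requires using the precise location of $p$ relative to a parallel positive crossing, exactly the dichotomy that Stoimenow's genus formula encodes, to show the rank-one correction cannot push the signature up to $0$. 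Once that case is pinned down, the inequality $\sigma(L) \le \sigma(L_{+}) + (\text{small}) < 0$ closes the argument.
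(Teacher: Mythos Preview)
The paper does not give its own proof of Theorem~\ref{almost2}; the result is quoted from Przytycki--Taniyama and Stoimenow and invoked only as a classification tool in Section~\ref{appendix}. There is therefore nothing in the paper to compare your argument against.

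On its own merits, your first route has a genuine gap. From $\sigma(L_{+})\le\sigma(L)\le\sigma(L_{+})+2$ you do \emph{not} get $\sigma(L)<0$ once $\sigma(L_{+})\le-2$; you only get $\sigma(L)\le0$, so the case $\sigma(L_{+})=-2$ is every bit as dangerous as $\sigma(L_{+})\in\{-1,0\}$. Nor is there any mechanism forcing $\sigma(L_{+})\le-3$ outside a finite set of diagrams: the positive twist knots, for instance, give infinitely many positive knots with $\sigma=-2$, and each can occur as $L_{+}$ for some almost positive diagram $D$, so ``treat the remaining small diagrams by hand'' does not terminate. The assertions that $|\sigma(L_{+})|$ is ``comparable to $2g(L_{+})$'' and that $L_{+}$ is ``fibered-like enough'' are not theorems and are false for positive knots in general. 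Your alternative route---analysing the Seifert form of an almost positive diagram directly as a rank-one perturbation---is the more promising direction, but you have only named it; the sentence ``the rank-one correction cannot push the signature up to $0$'' is precisely the content of the theorem and still needs an actual argument.
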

\begin{thm}[{\cite[Corollaries~$2.1$ and $2.2$]{homogeneous}}, {\cite{positive-conway}}]\label{almost3}
If $L$ is an almost positive link or a positive link, then all coefficients of its Conway polynomial are non-negative. 
\end{thm}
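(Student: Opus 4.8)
The plan is to prove both statements by induction on the number of crossings of a suitable diagram, using the Conway skein relation
\[
\nabla_{L_{+}}(z)-\nabla_{L_{-}}(z)=z\,\nabla_{L_{0}}(z),
\]
together with $\nabla(z)=1$ for the unknot and $\nabla(z)=0$ for split links. The mechanism is always the same: resolve a carefully chosen \emph{positive} crossing $c$ of a diagram $D$, so that the skein identity becomes $\nabla_{D}=\nabla_{D_{c}}+z\,\nabla_{D^{0}_{c}}$, where $D_{c}$ is $D$ with $c$ switched and $D^{0}_{c}$ is the oriented resolution of $c$; the sign in front of $z$ is then $+$, and the point is to choose $c$ so that both $D_{c}$ and $D^{0}_{c}$ lie, after obvious reductions, in the class under study with strictly fewer crossings.

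\emph{Positive links.} Let $D$ be a connected positive diagram with at least one crossing. Any oriented resolution $D^{0}_{c}$ is again positive with one fewer crossing, so $\nabla_{D^{0}_{c}}\geq 0$ by induction; it remains to choose $c$ so that $D_{c}$ is controlled. If $D$ contains a clasp---two crossings joining the same pair of Seifert circles and cobounding a bigon meeting no other part of $D$---then switching one of them creates a Reidemeister~II pair, so $D_{c}$ reduces to a positive diagram with two fewer crossings and $\nabla_{D_{c}}\geq 0$. In general, Vogel's algorithm (whose moves preserve crossing signs) lets us assume $D$ is a positive braid diagram $\widehat{\beta}$; if $\widehat\beta$ is a split link, an unknot, or an unlink, then $\nabla_{\widehat\beta}\in\{0,1\}$, and otherwise Stallings' analysis of the positive braid monoid produces, by conjugation and braid relations, a word of the form $\alpha\,\sigma_{i}^{2}\,\gamma$, i.e.\ a clasp. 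Feeding this back into the $+$-sign skein identity closes the induction.

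\emph{Almost positive links.} Fix an almost positive diagram $D$ of $L$ with negative crossing $p$, and recall from Theorem~\ref{stoimenow1} the dichotomy according to whether or not some positive crossing of $D$ joins the same pair of Seifert circles as $p$. Resolving $p$ itself only yields $\nabla_{L}=\nabla_{L^{+}}-z\,\nabla_{L_{0}}$ with a $-$ sign (here $L^{+}$ is positive and $L_{0}$ positive with one fewer crossing), which is useless, so one resolves instead a positive crossing $q$ selected from Stoimenow's structure. In the ``parallel'' case one takes $q$ joining the same pair of Seifert circles as $p$: the oriented resolution of $q$ is still almost positive with one fewer crossing (hence non-negative by induction), while the genus-dropping compression underlying Theorem~\ref{stoimenow1} shows that switching $q$ produces a diagram reducing to a positive one with strictly fewer crossings (hence non-negative by the positive case); as $q$ is positive the identity has a $+$ sign and we are done. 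In the ``non-parallel'' case, where $g(L)=g(D)$, the negative crossing lies in a more rigid position, and one uses the corresponding decomposition of $D$ along a Seifert circle incident to $p$ to find a positive crossing whose two resolutions are both positive or almost positive with fewer crossings.

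The real obstacle is the almost positive case: changing any single crossing of an almost positive diagram again gives an (almost) positive diagram with the \emph{same} crossing number, so the skein relation alone is circular, and the induction closes only because Stoimenow's classification (Theorem~\ref{stoimenow1} and the compressions behind it) pinpoints a crossing whose resolution genuinely reduces complexity in \emph{both} branches. Converting that geometric input into the combinatorial reductions used above---especially in the non-parallel case---is the heart of the argument; the positive case and all the skein bookkeeping are then routine.
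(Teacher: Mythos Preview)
The paper does not prove this theorem; it is quoted from Cromwell \cite{homogeneous} and \cite{positive-conway}, so there is no in-paper argument to compare against.  Judging your proposal on its own merits, there are two genuine gaps.

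\medskip

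\textbf{Positive case.}  The reduction to positive braids via Vogel's algorithm is incorrect.  Vogel's procedure turns an arbitrary diagram into a closed braid by a sequence of Reidemeister~II moves, each of which inserts a pair of crossings of \emph{opposite} sign; it does not preserve positivity.  More fundamentally, positive links form a strictly larger class than positive braid links: for example $5_{2}$ is a positive knot but is not fibered, whereas every positive braid link is fibered by Stallings' theorem.  So there is no way to ``assume without loss of generality'' that $D$ is a positive braid, and the clasp-finding step never gets off the ground.  The skein induction \emph{can} be made to work for positive diagrams, but one has to argue directly with the Seifert-circle structure of $D$ (this is what Cromwell does) rather than pass through braids.

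\medskip

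\textbf{Almost positive case.}  In your ``parallel'' branch you switch a positive crossing $q$ joining the same pair of Seifert circles as the negative crossing $p$, and assert that the resulting diagram reduces to a positive one with fewer crossings.  But switching $q$ leaves a diagram with \emph{two} negative crossings $p$ and $q'$, and there is no evident simplification: if $p$ and $q$ bounded a bigon they would already cancel in $D$ by Reidemeister~II (one is positive, one negative), contradicting the hypothesis that $L$ is almost positive.  The genus drop in Theorem~\ref{stoimenow1} comes from a compression of the Seifert \emph{surface}, not from a diagrammatic move that feeds into the skein relation, so invoking it here does not produce the reduction you need.  Your ``non-parallel'' branch is only a statement of intent, not an argument.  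In short, the skein framework is reasonable, but the specific crossings you propose to resolve do not close the induction in either case.
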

\begin{thm}[{\cite[Theorem~$6$]{stoimenow1}}]\label{thm:almost3}
If $L$ is an almost positive link, then 
\begin{align*}
\operatorname{maxdeg}_{z} \nabla_{L}(z)=\operatorname{maxdeg}_{z}P_{L}(v, z)=1-\chi(L),  
\end{align*}
where $\nabla_{L}$ is the Conway polynomial and $P_{L}(v, z)$ is the HOMFLYPT polynomial. 
\end{thm}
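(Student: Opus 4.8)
The plan is to resolve the unique negative crossing by the HOMFLYPT skein relation, reduce everything to two \emph{positive} links whose invariants are governed by Seifert's algorithm, and then pin down the top $z$-coefficient by a case analysis dictated by Stoimenow's genus dichotomy (Theorem~\ref{stoimenow1}). Write $d(L):=1-\chi(L)=2g(L)+\sharp L-1$ for the target degree. Two inputs will be used repeatedly: the specialization $\nabla_L(z)=P_L(1,z)$, which gives $\operatorname{maxdeg}_z\nabla_L\le\operatorname{maxdeg}_z P_L$ for free; and the classical genus bound $\operatorname{maxdeg}_z\nabla_L\le 2g(L)+\sharp L-1=d(L)$, coming from the breadth of the Alexander polynomial of a minimal-genus Seifert surface. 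Thus the upper bound $\operatorname{maxdeg}_z\nabla_L\le d(L)$ is automatic, and the real content is a matching lower bound for $\nabla_L$ together with the upper bound $\operatorname{maxdeg}_z P_L\le d(L)$.

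First I would set up the skein triple at the negative crossing $p$ of an almost positive diagram $D$ of $L$. Let $D_+$ be the positive diagram obtained by switching $p$ and $D_0$ the oriented smoothing of $p$; both are positive, with $c(D_+)=c(D)$, $O(D_+)=O(D)$ and $c(D_0)=c(D)-1$, $O(D_0)=O(D)$, so that, putting $\delta:=1-\chi(D)=c(D)-O(D)+1$, one has $1-\chi(D_+)=\delta$ and $1-\chi(D_0)=\delta-1$. With $L=L_-$, the HOMFLYPT relation gives $P_L=v^{-2}P_{L_+}-v^{-1}zP_{L_0}$, and its $v=1$ specialization gives $\nabla_L=\nabla_{L_+}-z\nabla_{L_0}$. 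Since $L_+$ and $L_0$ are positive, Seifert's algorithm realizes their genera, the Morton--Franks--Williams inequality is sharp, and by Cromwell's positivity (cf.\ Theorem~\ref{almost3}) their Conway leading coefficients are positive; hence $\operatorname{maxdeg}_z P_{L_+}=\delta$ and $\operatorname{maxdeg}_z P_{L_0}=\delta-1$, with explicit nonzero corner coefficients. Plugging in, both summands of $\nabla_L$ reach $z$-degree $\delta$: the coefficient of $z^{\delta}$ in $\nabla_L$ is $a_+-a_0$, where $a_\pm$ are the leading Conway coefficients of the two positive pieces.

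Now I would split according to Theorem~\ref{stoimenow1}. In case $(1)$ one has $g(L)=g(D)$, so $d(L)=\delta$; here Morton's inequality applied to $D$ already yields $\operatorname{maxdeg}_z P_L\le\delta=d(L)$, and it remains to show the leading term survives, i.e.\ $a_+\neq a_0$. Combined with the genus bound and $\operatorname{maxdeg}_z\nabla_L\le\operatorname{maxdeg}_z P_L$, a single non-cancellation $a_+\neq a_0$ squeezes $\delta\le\operatorname{maxdeg}_z\nabla_L\le\operatorname{maxdeg}_z P_L\le\delta$, giving equality to $d(L)$. In case $(2)$ one has $g(L)=g(D)-1$, so $d(L)=\delta-2$; the asserted value then forces the top \emph{two} $z$-coefficients of $P_L$ (and of $\nabla_L$) to vanish, after which the surviving $z^{\delta-2}$ coefficient must be shown nonzero, and again the genus bound squeezes all three quantities to $\delta-2=d(L)$.

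The main obstacle is exactly this leading/subleading coefficient bookkeeping, since it is the algebraic shadow of the genus dichotomy: the distinction between $a_+\neq a_0$ (case $(1)$) and the double cancellation (case $(2)$) is controlled precisely by whether $p$ has a parallel positive crossing $q$ joining the same two Seifert circles. I expect the cleanest route to be a second resolution at $q$ in case $(2)$, so that the local $\{p,q\}$ configuration between two Seifert circles produces a matching of the top corners of $P_{L_+}$ and $zP_{L_0}$ in the two extreme bidegrees and hence the required cancellation, whereas in case $(1)$ the absence of such a $q$ makes $D_0$ genuinely simpler than $D_+$ and blocks the matching; establishing these corner identities rigorously (for instance by induction on the number of crossings, with positive links as the base of the induction) is where the real work lies.
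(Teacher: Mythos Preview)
The paper does not prove this statement at all: Theorem~\ref{thm:almost3} is quoted verbatim from Stoimenow's paper \cite[Theorem~6]{stoimenow1} and is used as a black box in Section~\ref{appendix} to rule out almost positivity for certain $12$-crossing knots. There is therefore no ``paper's own proof'' to compare against.

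As for your sketch itself, the overall plan is the natural one and is close in spirit to how Stoimenow argues: resolve at the unique negative crossing, reduce to positive links where Seifert's algorithm, Morton--Franks--Williams, and Cromwell's positivity of Conway coefficients are all sharp, and then do a case split according to the genus dichotomy. The upper bound $\operatorname{maxdeg}_z P_L\le c(D)-O(D)+1$ from Morton's inequality and the genus bound $\operatorname{maxdeg}_z\nabla_L\le d(L)$ are indeed automatic, and the skein identities you write are correct.

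However, your proposal is explicitly incomplete at the point that actually carries all the content. In case~(1) you need $a_+\neq a_0$; in case~(2) you need the top two $z$-coefficients of $P_L$ to vanish and the next one to survive. You correctly identify this as ``where the real work lies,'' but you do not do it. The heuristic that ``the absence of a parallel crossing $q$ makes $D_0$ genuinely simpler and blocks the matching'' is not an argument: both $a_+$ and $a_0$ are positive integers (leading Conway coefficients of positive links), and nothing you have written excludes $a_+=a_0$ in case~(1). Likewise, the claimed double cancellation in case~(2) is not just a matter of resolving at $q$; one has to track the full corner structure of $P$ in both variables, not just the $z$-degree. Stoimenow's actual proof handles exactly this bookkeeping, and it is not short. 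So your plan is a reasonable outline, but as written it stops before the first nontrivial step.
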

\begin{prop}[{\cite[Proposition~$6.2$]{stoimenow_almostpositive3}}]\label{prop:almost}
Let $K$ be an almost positive knot with $g(K)\geq 3$. Then its signature $\sigma (K)$ is smaller than or equal to $-4$. 
\end{prop}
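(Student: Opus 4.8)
\textbf{Proof proposal for Proposition~\ref{prop:almost}.}
The plan is to follow the strategy underlying the estimates already developed for almost positive links and to combine a genus computation with a signature estimate coming from the resolution tree of the unique negative crossing. First I would fix an almost positive diagram $D$ of $K$ with the single negative crossing $p$, and split into the two cases of Theorem~\ref{stoimenow1} and Theorem~\ref{link-rem}. In either case the three-genus $g(K)$ is read off from $D$ via Seifert's algorithm (up to the correction by $1$ in the second case), so the hypothesis $g(K)\geq 3$ gives a concrete lower bound on the number of crossings of $D$ relative to the number of Seifert circles: $c(D)-O(D)\geq 2g(K)+\sharp K-2\geq 5$ in case (1), with the analogous inequality in case (2).

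Next I would bring in the signature. Let $D_{+}$ be the positive diagram obtained from $D$ by changing $p$, and $K_{+}$ the resulting positive knot. A single crossing change relates $\sigma(K)$ and $\sigma(K_{+})$ by at most $2$ (in the appropriate direction), and since $K_{+}$ is a \emph{nontrivial} positive knot — nontriviality following from $g(K_{+})=g(D_{+})=g(D)\geq 3>0$ together with Theorem~\ref{link-rem} and the genus relations — Theorem~\ref{almost4} gives $\sigma(K_{+})<0$, i.e.\ $\sigma(K_{+})\leq -2$ (signatures of knots being even). This already yields $\sigma(K)\leq 0$ but not the sharper bound $\sigma(K)\leq -4$, so the remaining work is to improve the input: one needs either that $\sigma(K_{+})\leq -4$ when $g(K_{+})\geq 3$ (a positive-knot analogue of the stated claim, provable from the fact that a positive knot of genus $\geq 2$ already has $\sigma\leq -4$, cf.\ the signature estimates for positive links and their Seifert form), or a more careful comparison of $\sigma(K)$ with $\sigma(K_{+})$ using the Goeritz/Seifert form of the almost positive diagram directly, showing that the single negative crossing cannot raise the signature all the way from $\leq -4$ up to $\geq -2$.

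Concretely, I would compute the Seifert matrix $A$ of the surface obtained from $D$ by Seifert's algorithm. Away from the negative crossing this matrix is the Seifert matrix of a positive surface, which is known to be negative definite "in the relevant sense'' when the genus is large — more precisely, the symmetrized form $A+A^{T}$ of a reduced positive diagram is negative definite, a classical fact used to prove Theorem~\ref{almost4}. The negative crossing perturbs $A+A^{T}$ in a bounded-rank way (it changes the form on the subspace spanned by the loops through $p$), so $\sigma(K)=\operatorname{sign}(A+A^{T})$ can differ from the negative-definite value by a controlled amount; combined with $g(K)\geq 3$, i.e.\ $\operatorname{rank}(A+A^{T})=2g(K)\geq 6$, one gets at least $6-c$ negative eigenvalues for a small constant $c$, hence $\sigma(K)\leq -4$. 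I expect the main obstacle to be precisely this linear-algebra step: pinning down exactly how the perturbation coming from the one negative crossing affects the inertia of $A+A^{T}$, and checking that the degenerate or low-genus boundary cases (e.g.\ $g(K)=3$ with the crossing arranged to cancel maximally) still leave room for four negative eigenvalues. If a direct argument proves delicate, the fallback is to invoke Stoimenow's refined study of the Seifert form of almost positive knots (the same source as Theorem~\ref{thm:almost3} and Proposition~\ref{prop:almost} itself), where the bound on $\operatorname{maxdeg}_{z}\nabla_{L}$ together with positivity of the Conway coefficients (Theorem~\ref{almost3}) constrains the form enough to force the signature bound.
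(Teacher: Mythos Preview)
The paper does not give its own proof of Proposition~\ref{prop:almost}; it is quoted verbatim from Stoimenow \cite[Proposition~6.2]{stoimenow_almostpositive3} and used as a black box in Section~\ref{sec:table-positive}. So there is nothing to compare your argument against within the paper itself.

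On its own merits your proposal has a real gap. The linchpin of your linear-algebra step is the assertion that for a reduced positive diagram the symmetrized Seifert form $A+A^{T}$ is negative definite. This is not true in general: negative definiteness would force $\sigma(K_{+})=-2g(K_{+})$ for every positive knot $K_{+}$, whereas there are many positive knots with $|\sigma|<2g$ (already among small positive knots one finds genus~$2$ examples with signature $-2$). The proofs of Theorem~\ref{almost4} (Przytycki, Cochran--Gompf, Traczyk) show only that the signature is strictly negative, not that the form is definite, so you cannot start from ``$6$ negative eigenvalues minus a bounded perturbation'' as you propose. Without that input, the rank-one perturbation coming from the single negative crossing gives you nothing beyond $\sigma(K)\leq 0$, which you already obtained in the first paragraph.

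Your suggested fallback is circular: invoking ``Stoimenow's refined study of the Seifert form of almost positive knots'' to prove Proposition~\ref{prop:almost} is invoking the very paper and argument from which the proposition is cited. If you want an honest route, you would need either an independent proof that positive knots of genus $\geq 3$ satisfy $\sigma\leq -4$ (this is also nontrivial and not a consequence of Theorem~\ref{almost4} alone), or to reproduce Stoimenow's actual analysis of the generators of almost positive knots of bounded genus, which is considerably more involved than the sketch here.
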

\begin{thm}[{\cite[Corollary $5.1$]{homogeneous}}]\label{homo}
If $L$ is a homogeneous link and the coefficient of the maximal degree term of its Conway polynomial is $\pm 1$, then the number of the crossings of a homogeneous diagram of $L$ is at most $2\cdot maxdeg_{z} \nabla_{L}(z)$, where $maxdeg_{z} \nabla_{L}(z)$ is the maximal degree of the Conway polynomial of $L$. 
In particular, the minimal crossing number of $L$ is at most $2\cdot maxdeg_{z} \nabla_{L}(z)$. 
\end{thm}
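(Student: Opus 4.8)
The plan is to reduce the assertion to one inequality about the Seifert matrix of a homogeneous diagram, and then prove that inequality block by block. First I would fix a homogeneous diagram $D$ of $L$ together with a $*$-product decomposition $D = D_{1}*\cdots*D_{n}$ into connected reduced special alternating diagrams (refining the decomposition further so that each $D_{i}$ is strongly prime, which forces the Seifert graph $G_{i}$ of $D_{i}$ to be two-connected). By Cromwell's theorem the Seifert surface $F$ obtained by applying Seifert's algorithm to $D$ realises $g(L)$, so that
\[ b_{1}(F) = c(D)-O(D)+1 = 2g(L)+\sharp L-1, \]
and likewise $b_{1}(F_{i}) = c(D_{i})-O(D_{i})+1$ for the Seifert surface $F_{i}$ of $D_{i}$. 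Moreover $F$ is the Murasugi sum ($*$-plumbing) of the $F_{i}$ along the decomposing Seifert circles, so $H_{1}(F)=\bigoplus_{i}H_{1}(F_{i})$ and, with a suitable choice of positive pushoff, a Seifert matrix $V$ of $F$ is block upper triangular with diagonal blocks Seifert matrices $V_{i}$ of $F_{i}$. Since the top-degree coefficient of $\nabla_{L}$ equals, up to sign, $\det V$, the hypothesis says $|\det V|=1$, hence $|\det V_{i}|=1$ for every $i$; in particular each $\nabla_{D_{i}}$ has leading coefficient $\pm1$ and honest degree $b_{1}(F_{i})$, and therefore $\operatorname{maxdeg}_{z}\nabla_{L}=b_{1}(F)=\sum_{i}b_{1}(F_{i})=\sum_{i}\operatorname{maxdeg}_{z}\nabla_{D_{i}}$.

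Since also $c(D)=\sum_{i}c(D_{i})$, it suffices to prove the bound one block at a time: if $D_{i}$ is a connected reduced special alternating diagram — say positive, after passing to the mirror if necessary — whose Seifert matrix satisfies $|\det V_{i}|=1$, then $c(D_{i})\le 2\,b_{1}(F_{i})$, equivalently $c(D_{i})\ge 2(O(D_{i})-1)$, equivalently $b_{1}(G_{i})\ge O(D_{i})-1$. I would prove this using the bounded-face basis of $H_{1}(F_{i})=H_{1}(G_{i})$ coming from the plane embedding of $G_{i}$: the number of bounded faces is exactly $b_{1}(G_{i})$, and for a positive special alternating diagram the Seifert matrix in this basis is determined combinatorially by $G_{i}$, with the diagonal entry of a face that has $k$ boundary edges equal to $-(k-1)$ and off-diagonal entries recording edges shared by two faces. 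Unimodularity then excludes the sparse configurations — a bounded face enclosed by a long cyclic chain of edges forces a non-unimodular diagonal contribution that cannot be cancelled — and one shows it forces $G_{i}$ to be the checkerboard graph of a plumbing of positive Hopf bands, for which $b_{1}(G_{i})\ge O(D_{i})-1$ follows by induction on the number of plumbings: a single Hopf band has $b_{1}=1$ and two Seifert circles, and each further plumbing raises $b_{1}$ by one while raising the number of Seifert circles by at most one. Summing the block inequalities gives $c(D)\le 2\operatorname{maxdeg}_{z}\nabla_{L}$. The last sentence is then immediate, since any homogeneous diagram $D$ of $L$ satisfies $c(D)\le 2\operatorname{maxdeg}_{z}\nabla_{L}$ and the minimal crossing number of $L$ is at most $c(D)$.

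The routine ingredients are the identity $c(D)-O(D)+1=2g(L)+\sharp L-1$ for homogeneous diagrams and the multiplicativity of the leading coefficient of $\nabla$ under $*$-product; the hard part will be the block-wise step, namely deducing $c(D_{i})\ge 2(O(D_{i})-1)$ from unimodularity of the Seifert matrix of a special alternating diagram. Its real content is the classification of fibered special alternating links — I would lean on Gabai's characterisation of fibered alternating links as boundaries of plumbings of Hopf bands — together with the bookkeeping that the Hopf-plumbing construction never produces a Seifert circle "for free", so that the inequality $O(D_{i})-1\le b_{1}(F_{i})$ is preserved at every plumbing step.
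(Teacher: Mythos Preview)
The paper does not give its own proof of this theorem; it is quoted as Cromwell's result \cite[Corollary~5.1]{homogeneous} and used as a black box. So there is no in-paper argument to compare against, only Cromwell's original one.

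Your reduction is correct and is exactly Cromwell's first step: the $*$-product decomposition makes the Seifert matrix block upper triangular, so $|\det V|=1$ forces $|\det V_i|=1$ for every special alternating factor, and $c(D)=\sum c(D_i)$, $\operatorname{maxdeg}_z\nabla_L=\sum b_1(F_i)$ reduce the problem to a single reduced special alternating piece.

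The gap is in your block-wise step. You pass from $|\det V_i|=1$ to ``$F_i$ is fibered'' to ``$F_i$ is a plumbing of Hopf bands'' (Gabai), and then run an induction on plumbings. But Gabai's statement is about the fiber surface up to isotopy, whereas the quantity you must bound, $O(D_i)$, is the number of $0$-handles in the \emph{particular} disk--band decomposition coming from the diagram $D_i$. Your induction bounds the disk count in the plumbing's own disk--band structure, not in the one given by $D_i$; nothing you wrote connects the two. The sentence ``one shows it forces $G_i$ to be the checkerboard graph of a plumbing of positive Hopf bands'' is precisely the missing lemma, and it is doing all the work.

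Cromwell's argument avoids this detour entirely and stays with the Seifert graph $G_i$. He shows that for a special alternating diagram the leading coefficient of $\nabla$ factors as a product over the $2$-connected blocks of $G_i$, each factor being a positive integer that equals $1$ exactly when the block is a bundle of parallel edges on two vertices. Thus $|\det V_i|=1$ forces every block of $G_i$ to be such a bundle; collapsing each bundle to a single edge turns $G_i$ into a tree on $O(D_i)$ vertices, hence with $O(D_i)-1$ bundles, each containing at least two edges of $G_i$ (since $D_i$ is reduced). This gives $c(D_i)\ge 2(O(D_i)-1)$ immediately. Ironically, your own description of the Seifert matrix in the bounded-face basis (diagonal entries $-(k-1)$, off-diagonals recording shared edges) is the right starting point for this direct computation; pushing that line, rather than invoking Gabai, closes the argument.
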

\begin{thm}[{\cite[Theorem~$1.4$]{positive-genustwo}}]\label{kh-thick}
Positive knots up to genus two are quasialternating. 
\end{thm}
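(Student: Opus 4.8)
The plan is to combine a classification of positive knots of small genus with the reduction procedure that defines quasialternating links. First I would use that a positive knot $K$ of genus $g$ admits a positive diagram whose Seifert surface has genus $g$, since positive diagrams are homogeneous and homogeneous diagrams realize the Seifert genus by Cromwell \cite{homogeneous}. For $g\le 2$ the combinatorics of such diagrams is constrained enough that the positive knots of genus one and of genus two fall into finitely many families: each such $K$ is either a connected sum of positive knots of genus one, or is obtained from a fixed small ``base'' diagram by inserting positive twists into a bounded number of designated twist regions.

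Next I would dispose of the base cases and the composite case. Positive knots of genus one are twist knots, namely $3_1, 5_2, 7_2,\dots$, which are alternating; the base diagrams for the genus-two families are likewise small alternating links, for instance the $(2,n)$-torus knots; and a connected sum of alternating diagrams is alternating. By the theorem of Ozsv\'{a}th and Szab\'{o} that every connected, non-split alternating link is quasialternating, all of these are quasialternating.

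The heart of the argument is to show that inserting a positive twist into a designated twist region of a quasialternating diagram appearing in one of these families again yields a quasialternating knot. The efficient route is to invoke the twisting principle of Champanerkar and Kofman for quasialternating links: adding a half-twist at a crossing in the direction compatible with the quasialternating reduction preserves quasialternatingness. In each family the twist regions are attached so that the relevant crossings are exactly of this type, and this is precisely where positivity of the diagram is used: it forces the two smoothings of such a crossing to be the two smoothings entering the quasialternating reduction and forces $\det K=\det K_0+\det K_1$ with both summands positive. Equivalently one can argue directly by induction on the total number of inserted twists, choosing the reducing crossing inside an outermost twist region: one smoothing decreases the twist number, so it stays in the same family and is quasialternating by the inductive hypothesis, while the other produces a strictly simpler diagram that is quasialternating by the previous paragraph, and the determinant additivity follows because the determinant of a diagram containing a twist region is an affine function of the twist parameter. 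Iterating over all twist regions, starting from the alternating base diagrams, exhibits every positive knot of genus $\le 2$ as quasialternating.

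The main obstacle I expect is the classification step together with the control of signs. One must verify that no family of positive knots of genus at most two is missed, and that every twist region that actually occurs carries the sign needed to apply the twisting principle; a twist region of the ``wrong'' sign would have to be absorbed into the base diagram or handled by a separate, direct computation of the determinants. Confirming $\det K_0, \det K_1>0$ and the additivity uniformly across all of the resulting configurations is elementary but must be organized carefully.
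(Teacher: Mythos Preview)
The paper does not prove this theorem; it merely quotes it as \cite[Theorem~1.4]{positive-genustwo} and uses it as a black box to rule out positivity of $12_{n149}$, $12_{n332}$ and $12_{n432}$. There is therefore no proof in the paper to compare your proposal against.

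As for your outline itself, the overall architecture---reduce to a finite list of generating diagrams for positive knots of genus at most two, then propagate quasialternatingness through twist regions via the Champanerkar--Kofman twisting lemma---is a reasonable strategy, and is close in spirit to how such results are typically obtained. But several of the steps you treat as routine are not. The finiteness of generating positive diagrams for a fixed genus is a nontrivial theorem of Stoimenow, and the explicit list for genus two is substantially longer and more varied than ``a fixed small base diagram with twist regions''; your description of the genus-one case as the twist knots $3_1,5_2,7_2,\dots$ and of the genus-two base cases as $(2,n)$ torus knots is too casual to serve as a classification.

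The more serious gap is the invocation of the twisting principle. Champanerkar--Kofman requires the crossing at which you twist to be a \emph{quasialternating crossing}: both resolutions must already be quasialternating, both must have positive determinant, and the determinants must add. Your assertion that ``positivity of the diagram forces the two smoothings of such a crossing to be the two smoothings entering the quasialternating reduction and forces $\det K=\det K_0+\det K_1$'' is exactly the point that needs proof, and positivity by itself does not supply it: the unoriented resolution of a positive crossing in a positive diagram is not in general a positive diagram, need not be quasialternating, and the determinant additivity does not follow from sign considerations alone. You correctly flag this as the main obstacle, but the proposal does not contain a mechanism to overcome it; one has to check, generator by generator and twist region by twist region, that the required crossings really are quasialternating, and that is where the actual work in the cited reference lies.
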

For the definition of quasialternating links, see \cite{quasi-alternating}. 
\begin{lem}\label{n638}
The knot $12_{n638}$ is a positive knot. 
\end{lem}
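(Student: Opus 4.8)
\textbf{Proof proposal for Lemma~\ref{n638}.}
The plan is to exhibit an explicit positive diagram (equivalently, a positive braid word) whose closure is $12_{n638}$, and then certify that the resulting knot is indeed $12_{n638}$ rather than some other $12$-crossing knot. First I would consult a braid word for $12_{n638}$ as recorded in KnotInfo \cite{knot_info}, and attempt to transform it by Markov moves and braid relations into a positive word $\beta = \prod_{k} \sigma_{i_k}$ with all $\sigma_{i_k}$ appearing to positive powers. Since all invariants computed in this section (signature, Conway polynomial, HOMFLYPT polynomial, maximal Khovanov degree) are consistent with positivity, one expects such a word to exist; the search is finite once one fixes a bound on the braid index and word length, and in practice it is carried out by the mathematica package ``KnotTheory'' \cite{Bar-Natan-2} together with KnotInfo.

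The key steps, in order, are: (i) write down a candidate positive braid word $\beta$; (ii) compute the closure $\widehat{\beta}$ and check, via a complete invariant or normal-form computation available in ``KnotTheory'', that $\widehat{\beta} = 12_{n638}$ (for instance by comparing the HOMFLYPT and Jones polynomials together with the hyperbolic volume or the Alexander polynomial, which together distinguish $12_{n638}$ from all other knots with at most $12$ crossings); (iii) observe that $\beta$ being a positive word means the closure diagram $D_\beta$ has only positive crossings, so $12_{n638}$ is positive by definition. Alternatively, if a positive braid word is awkward to produce, one may instead directly draw a positive (non-braid) diagram $D$ of $12_{n638}$ obtained from a known minimal diagram by a sequence of Reidemeister moves that removes the negative crossings, and verify $D$ represents $12_{n638}$ by the same invariant check.

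The main obstacle is step (ii): producing the word is comparatively easy, but rigorously certifying that its closure is exactly $12_{n638}$ requires either a provably complete set of computable invariants on the relevant finite list of knots, or a direct combinatorial identification of the diagram with a tabulated one. In the context of this paper this is handled by appeal to the computer databases (KnotInfo \cite{knot_info} and ``KnotTheory'' \cite{Bar-Natan-2}) already invoked in Section~\ref{appendix}, so the proof reduces to displaying the positive word and citing the verification; I would present the explicit positive braid word and the matching invariant values, and leave the routine verification to those packages.
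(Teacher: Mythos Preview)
Your proposal is correct and takes essentially the same approach as the paper: the paper's entire proof is ``See Figure~\ref{12_n638}'', which simply displays an explicit positive diagram of $12_{n638}$. Your write-up is a more verbose version of the same idea (exhibit a positive diagram and verify the knot type), with the verification outsourced to the same computer packages the paper already uses.
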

\begin{proof}
See Figure~$\ref{12_n638}$. 
\end{proof}
\begin{figure}[!h]
\begin{center}
\includegraphics[scale=0.75]{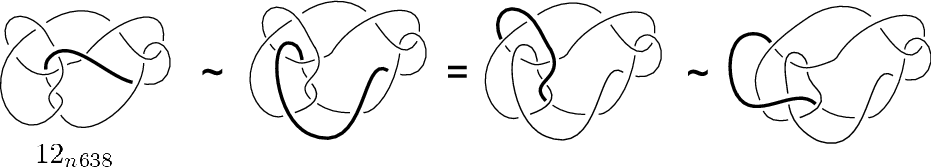}
\end{center}
\caption{The knot $12_{n638}$ has a positive diagram. }
\label{12_n638}
\end{figure}
%
%
\begin{rem}
In the above process, we find some almost positive knots, $10_{145}$, $12_{n149}$, $12_{n332}$, $12_{n404}$, $12_{n432}$ and $12_{n642}$. 
They have almost positive diagrams, and 
$10_{145}$, $12_{n404}$ and $12_{n642}$ are not homogeneous by Theorem~$\ref{homo}$. 
The knots $12_{n149}$, $12_{n332}$ and $12_{n432}$ are not positive by Theorem~$\ref{kh-thick}$. 
\end{rem}
The knots $12_{n148}$, $12_{n276}$, $12_{n329}$, $12_{n366}$, 
$12_{n402}$, $12_{n528}$ and $12_{n660}$ are either positive or almost positive since they have almost positive diagrams. 
In general, it is hard to check whether given almost positive link diagram represents a positive link or not. 
\begin{question}\label{question:almost}
Are the knots $12_{n148}$, $12_{n276}$, $12_{n329}$, $12_{n366}$, 
$12_{n402}$, $12_{n528}$ and $12_{n660}$ non-positive? (If so, they are almost positive knots.) 
\end{question}
\begin{rem}
In \cite[Example~$6.1$]{almost_stoimenow} and \cite[Corollary~$10$]{stoimenow1}, Stoimenow introduced infinitely many almost positive knots. 
\end{rem}
\begin{table}[h]
\begin{tabular}{|c|c|c|}
\hline
&$\leq 11$ crossings&$12$ crossings \\ \hline
total&$801$&$2176$ \\ \hline
non-positive (negative) knots&$693$&$2031 \leq , \leq2038$ \\ \hline
positive (negative) knots&$108$&$138\leq , \leq 145 $ \\ \hline\hline
almost positive (negative) knots&$1$&$5\leq , \leq  12$ \\ \hline
\end{tabular}
\caption{The positivities of knots with up to $12$ crossings. To determine the almost positivities of some knots, we use Theorem~$\ref{thm:almost3}$ and 
Proposition~$\ref{prop:almost}$. 
The only almost positive knot with up to $11$ crossings is $10_{145}$. 
The knots, $12_{n149}$, $12_{n332}$, $12_{n404}$, $12_{n432}$ and $12_{n642}$ are almost positive. 
Are $12_{n148}$, $12_{n276}$, $12_{n329}$, $12_{n366}$, $12_{n402}$, $12_{n528}$, and $12_{n660}$ almost positive? } 
\label{table1}
\end{table}
%
%
\subsection{Strong quasipositivities of almost positive knots with up to 12 crossings}\label{sec:table2}
We check the strong quasipositivities of almost positive knots with up to 12 crossings. 
In this section, we call a knot strongly quasipositive if the knot or the mirror image of the knot is strongly quasipositive. 
\par
From Table~$\ref{table1}$, the $6$ knots, $10_{145}$, $12_{n149}$, $12_{n332}$, $12_{n404}$, $12_{n432}$ and $12_{n642}$ are almost positive. 
In addition, the $7$ knots, $12_{n148}$, $12_{n276}$, $12_{n329}$, $12_{n366}$, $12_{n402}$, $12_{n528}$, and $12_{n660}$ may be almost positive, and other knots with up to $12$ crossings are not almost positive. 
From Lemmas~$\ref{lem:almost-strong2}$ and $\ref{lem:almost-strong1}$ below and Table~$\ref{table1}$, we obtain the following proposition. 
The proposition is evidence towards an affirmative answer to Question~$\ref{conj:stoimenow}$. 
\begin{prop}
All almost positive knots with up to $12$ crossings are strongly quasipositive. 
\end{prop}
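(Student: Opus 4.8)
The plan is to reduce the statement "all almost positive knots with up to $12$ crossings are strongly quasipositive" to checking a finite, explicit list of knots, and then to dispose of that list by exhibiting strongly quasipositive presentations (or by invoking a criterion that already proved them so). First I would invoke Table~\ref{table1} and the preceding discussion of Section~\ref{appendix}: the only almost positive knots with at most $11$ crossings is $10_{145}$, and among $12$-crossing knots the ones that are definitely almost positive are $12_{n149}$, $12_{n332}$, $12_{n404}$, $12_{n432}$ and $12_{n642}$, while $12_{n148}$, $12_{n276}$, $12_{n329}$, $12_{n366}$, $12_{n402}$, $12_{n528}$ and $12_{n660}$ are \emph{a priori} either positive or almost positive. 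So it suffices to show each knot in the union of these two lists is strongly quasipositive (up to mirror image). Knots that turn out to be positive are automatically strongly quasipositive by Nakamura \cite{Nakamura} and Rudolph \cite{Rudolph}, so those seven ``undetermined'' cases are harmless; the real content is the six genuinely almost positive knots.

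Next I would handle the six almost positive knots directly. For each of $10_{145}$, $12_{n149}$, $12_{n332}$, $12_{n404}$, $12_{n432}$, $12_{n642}$ I would produce an explicit braid word of the form $\beta=\prod_k \sigma_{i_k,j_k}$ whose closure is the knot (or its mirror); this is exactly the content of Lemmas~\ref{lem:almost-strong2} and \ref{lem:almost-strong1} cited just before the proposition. Concretely, one starts from the almost positive diagram, applies isotopies / Markov moves to bring it to the closure of a positive braid up to a small number of embedded-band stabilizations, and reads off the $\sigma_{i,j}$ factors. A convenient sanity check is the Euler characteristic bookkeeping from Section~\ref{sec:strong-quasi}: a quasipositive surface built from such a $\beta$ on $n$ strands with $m$ bands has $\chi=n-m$, and by Theorem~\ref{theorem:positivity} this must equal $1-s(K)=1-(2g(K)+0)=1-2g(K)$; since Theorem~\ref{link-rem} already computes $s$ and Stoimenow's Theorem~\ref{stoimenow1} computes $g$ for these knots from their almost positive diagrams, the number of bands and strands is pinned down in advance, which tells you how long a braid word to look for.

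The main obstacle is purely the bookkeeping of finding and verifying the explicit strongly quasipositive braid words for the six almost positive knots — this is a finite but genuinely hand-to-hand computation, carried out in Lemmas~\ref{lem:almost-strong2} and \ref{lem:almost-strong1}, and it is where all the real work sits; the rest of the argument is just assembling those lemmas with Table~\ref{table1}. A secondary subtlety is orientation/mirror conventions: ``strongly quasipositive'' here means the knot \emph{or its mirror} is strongly quasipositive in the Rudolph sense, and one must make sure the braid word produced matches the chirality actually tabulated (e.g.\ in KnotInfo \cite{knot_info}), but this is a routine check once the word is in hand. Thus the proof is: by Table~\ref{table1} the almost positive knots with $\le 12$ crossings lie among the thirteen listed knots; the seven possibly-positive ones are strongly quasipositive whenever they are positive by \cite{Nakamura,Rudolph}; and the six definitely almost positive ones are shown strongly quasipositive by the explicit braids of Lemmas~\ref{lem:almost-strong2} and \ref{lem:almost-strong1}, completing the proof. $\square$
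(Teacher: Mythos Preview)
Your reduction via Table~\ref{table1} to the thirteen candidate knots is correct, and citing Lemmas~\ref{lem:almost-strong2} and~\ref{lem:almost-strong1} is the right move. However, there is a genuine logical gap in how you dispose of the seven ``undetermined'' knots $12_{n148}$, $12_{n276}$, $12_{n329}$, $12_{n366}$, $12_{n402}$, $12_{n528}$, $12_{n660}$. You argue they are ``harmless'' because \emph{if positive} they are strongly quasipositive by \cite{Nakamura,Rudolph}. But the proposition must cover the case where some of these knots are in fact almost positive (Question~\ref{question:almost} is open), and your argument says nothing in that case. You need to show all thirteen knots are strongly quasipositive \emph{unconditionally}; the paper does exactly this, and this is why Lemma~\ref{lem:almost-strong2} covers nine knots --- the seven undetermined ones together with $10_{145}$ and $12_{n642}$ --- not just two.

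You also mischaracterize the content of the two lemmas. Neither one produces explicit strongly quasipositive braid words. Lemma~\ref{lem:almost-strong2} observes that its nine knots are fibered (from KnotInfo) and then invokes Theorem~\ref{thm:fiber-almost}, which shows fibered almost positive knots are strongly quasipositive via Hedden's $\tau=g$ criterion and a plumbing argument for fiber surfaces. Lemma~\ref{lem:almost-strong1} handles the four non-fibered knots $12_{n149}$, $12_{n332}$, $12_{n404}$, $12_{n432}$ by exhibiting, via pictures, that a Seifert surface of each is obtained from the canonical surface of a positive diagram by plumbings/deplumbings of Hopf bands, and then invoking Rudolph's result \cite{Rudolph3} that such operations preserve quasipositivity of surfaces. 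Your proposed route of searching for explicit $\prod\sigma_{i_k,j_k}$ words might work in principle, but it is not what these lemmas do, and you would have to carry it out for all thirteen knots, not six.
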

\begin{lem}\label{lem:almost-strong2}
The $9$ knots, $10_{145}$, $12_{n148}$, $12_{n276}$, $12_{n329}$, $12_{n366}$, $12_{n402}$, $12_{n528}$, $12_{n642}$ and $12_{n660}$ are strongly quasipositive. 
\end{lem}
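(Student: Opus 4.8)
The plan is to exhibit, for each of the nine knots $10_{145}$, $12_{n148}$, $12_{n276}$, $12_{n329}$, $12_{n366}$, $12_{n402}$, $12_{n528}$, $12_{n642}$ and $12_{n660}$, an explicit strongly quasipositive braid word (in the sense of Rudolph, i.e.\ a product $\prod_k \sigma_{i_k,j_k}$ of positive embedded bands) whose closure is the given knot, or whose closure is the mirror image of the given knot, so that it counts as strongly quasipositive in the convention of this section. First I would search, for each knot, among braids on a small number of strands (three or four strands will likely suffice given the crossing numbers involved), for a word of the correct form; the candidate words can be found by a computer search constrained by the fact that a strongly quasipositive surface built from an $n$-braid with $m$ bands has Euler characteristic $n-m$, and by Theorem~\ref{theorem:positivity} this must equal $1 - s(L) = 1 - (2g_*(L))$ for a knot, pinning down $m - n$ once the $s$-invariant (equivalently the slice genus, known from KnotInfo) is known.

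The verification then has two routine parts. First, one checks that the proposed braid word really is of strongly quasipositive form: each band $\sigma_{i_k,j_k}$ must be written as $(\sigma_{i_k}\cdots\sigma_{j_k-2})\sigma_{j_k-1}(\sigma_{i_k}\cdots\sigma_{j_k-2})^{-1}$ (or simply $\sigma_{j-1}$ when $j-1=i$), and the word is a product of such bands with no stray negative generators. Second, one checks that the closure of this braid is the correct knot: this is a knot-identification computation, carried out by computing a classical invariant that already appears in the paper (the HOMFLYPT or Conway polynomial, the signature, the Jones polynomial via the Khovanov homology) and comparing with the KnotInfo values, or more robustly by reducing the braid closure to a known diagram via Markov moves / the mathematica package ``KnotTheory''\cite{Bar-Natan-2}. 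For the low-crossing case $10_{145}$ the braid is short enough that the identification can be made by hand; for the $12$-crossing knots one records the braid word and cites the computation.

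The main obstacle is the knot-identification step for the genuinely almost positive $12$-crossing knots, and, logically prior to that, the fact that for $12_{n148}$, $12_{n276}$, $12_{n329}$, $12_{n366}$, $12_{n402}$, $12_{n528}$ and $12_{n660}$ we do not actually know they are almost positive — only that they are either positive or almost positive (Question~\ref{question:almost}). This is not a real obstruction to \emph{this} lemma, however: a positive knot is strongly quasipositive by \cite{Nakamura} and \cite{Rudolph}, so whichever of the two alternatives holds, the strong quasipositivity conclusion follows once we produce the strongly quasipositive braid. Thus the clean way to organize the proof is: produce the braid words, verify they are in strongly quasipositive form, and verify their closures are the named knots; then observe that the remaining two almost positive knots $12_{n149}$ and $12_{n432}$ from Table~\ref{table1} are the ones handled separately in Lemma~\ref{lem:almost-strong1}, so the two lemmas together cover all thirteen candidate knots. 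I would expect the braid words themselves to be short enough to display in a figure or an inline array, with the knot identifications deferred to a computer check.
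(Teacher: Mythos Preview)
Your approach is genuinely different from the paper's. The paper does not produce explicit strongly quasipositive braid words at all. Instead, it observes (via KnotInfo) that all nine knots are \emph{fibered}, notes that each has an almost positive diagram and so is either positive or almost positive, and then invokes Theorem~\ref{thm:fiber-almost} (proved immediately afterward), which says that every fibered almost positive knot is strongly quasipositive. Combined with the fact that positive knots are strongly quasipositive (\cite{Nakamura}, \cite{Rudolph}), this disposes of all nine knots in one stroke.

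Your route would work in principle, but it trades a short theoretical argument for nine separate computer searches and identifications, and as written you have not actually produced the braid words --- so the proposal is a plan rather than a proof. The paper's approach buys uniformity and a conceptual explanation for why exactly these nine knots are grouped together (they are the fibered ones among the thirteen candidates; the four non-fibered knots $12_{n149}$, $12_{n332}$, $12_{n404}$, $12_{n432}$ are the ones handled by the deplumbing argument in Lemma~\ref{lem:almost-strong1} --- four, not two, as you wrote). The key observation you are missing is fiberedness: once you notice that, Theorem~\ref{thm:fiber-almost} does all the work and no explicit braids are needed.
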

\begin{proof}
It is known that these knots are fibered (KnotInfo \cite{knot_info}). 
These knots are positive or almost positive because they have almost positive diagrams. 
Note that positive links are strongly quasipositive (see \cite{Nakamura} and \cite{Rudolph}). 
By Theorem~$\ref{thm:fiber-almost}$ below, these knots are strongly quasipositive. 
\end{proof}
\begin{thm}\label{thm:fiber-almost}
All fibered almost positive knots are strongly quasipositive. 
\end{thm}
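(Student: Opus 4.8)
The plan is to reduce the statement to a computation of the Ozsv\'ath--Szab\'o invariant $\tau$ and then to invoke a theorem of Hedden: \emph{a fibered knot $K$ with $\tau(K)=g(K)$ is strongly quasipositive}. Granting this, it suffices to prove that
\[ \tau(K)=g(K) \]
for every fibered almost positive knot $K$; in fact I would prove this equality for every almost positive knot, running the argument of the proof of Theorem~\ref{link-rem} but with Corollary~\ref{cor:tau1} (Kawamura's inequality for $\tau$) in place of the bounds for the $s$-invariant.

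So fix an almost positive diagram $D$ of $K$ with unique negative crossing $p$. We may assume $p$ is non-nugatory, since otherwise $K$ would be positive --- being either a Reidemeister-I reduction or a connected sum of diagrams with only positive crossings --- contradicting the hypothesis. Then smoothing $p$ leaves the diagram connected (Lemma~\ref{lem:3}) and produces a positive diagram, so $O_{+}(D)=1$; combining the lower bound of Corollary~\ref{cor:tau1} with the identities $w(D)=c(D)-2$ and $2g(D)=c(D)-O(D)+1$ gives
\[ 2g(D)-2 \;=\; w(D)-O(D)+1 \;\le\; 2\tau(K), \]
while $\tau(K)\le g_{*}(K)\le g(K)$ always holds. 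If some positive crossing of $D$ joins the same two Seifert circles as $p$, then $g(K)=g(D)-1$ by Theorem~\ref{stoimenow1}, so $2g(K)=2g(D)-2\le 2\tau(K)\le 2g(K)$ and hence $\tau(K)=g(K)$; this case is complete.

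The remaining case is that no positive crossing joins the same two Seifert circles as $p$. Here Theorem~\ref{stoimenow1} gives $g(K)=g(D)$, so the estimates above only pin down $\tau(K)\in\{g(D)-1,\,g(D)\}$, and the whole content of the argument is to exclude $\tau(K)=g(D)-1$; I expect this to be the main obstacle. My approach would be to prove the knot-Floer-homology analogue of Lemma~\ref{lem1}: under this hypothesis on $D$, the knot Floer homology of $K$ vanishes in the bidegree that an element of $\tau$-value $g(D)-1$ would have to occupy, equivalently the generator of $\widehat{HFK}(K,g(D))$ cannot be a boundary in the spectral sequence converging to $\widehat{HF}(S^{3})$. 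This forces $\tau(K)>g(D)-1$, hence $\tau(K)=g(D)=g(K)$. The non-vanishing itself should follow from the same state-sum/skein analysis that yields the Khovanov-homological Lemma~\ref{lem1} (cf.~\cite{tagami3}), and the remaining steps are formal. Once $\tau(K)=g(K)$ is established for all almost positive knots, Hedden's theorem shows in particular that every fibered almost positive knot is strongly quasipositive.
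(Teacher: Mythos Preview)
Your treatment of the case where some positive crossing joins the same two Seifert circles as $p$ matches the paper's: both pin down $\tau(K)=g(K)=g(D)-1$ from the Kawamura lower bound and Stoimenow's genus formula, and then invoke Hedden's theorem.

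The gap is in the other case. You reduce everything to a knot Floer analogue of Lemma~\ref{lem1}, but offer only the expectation that ``the same state-sum/skein analysis'' carries over. It does not: the proof of Lemma~\ref{lem1} in \cite{tagami3} is a direct computation in the cube-of-resolutions chain complex for Khovanov homology, and $\widehat{HFK}$ has no parallel chain-level description in which that argument makes sense. (Your phrasing is also slightly off: an element in top Alexander grading $g(D)$ is \emph{never} a boundary in the spectral sequence, since nothing lies above it; the issue is whether it is a permanent \emph{cycle}.) Establishing the vanishing you need would prove $\tau(K)=g(K)$ for \emph{all} almost positive knots, fibered or not --- a statement not in the literature and strictly stronger than Theorem~\ref{thm:fiber-almost}. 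The paper instead handles this case geometrically, using fiberedness in an essential way: since $g(K)=g(D)$, the canonical Seifert surface of $D$ is the fiber surface; its $*$-product decomposition is a Murasugi-sum decomposition of the fiber, whose factors are themselves fiber surfaces by Gabai. The positive factors are quasipositive, and the unique special almost positive factor is a plumbing of positive Hopf bands by Goda--Hirasawa--Yamamoto \cite{fiber-almostalt}, hence quasipositive. Rudolph's theorem that Murasugi sum preserves quasipositivity then assembles a quasipositive surface for $K$, with no appeal to $\tau$ in this case at all.
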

\begin{proof}
Let $K$ be a fibered almost positive knot and $D$ be an almost positive diagram. 
Obviously, the diagram $D$ has a $*$-product decomposition whose factors are some positive diagrams $D_{1},\dots, D_{n-1}$ and one special almost positive diagram $D_{n}$. 
Let $S$ and $S_{i}$ be the Seifert surfaces obtained from $D$ and $D_{i}$, respectively. 
Note that $S_1, \dots, S_{n-1}$ are quasipositive surfaces (see \cite{Nakamura} and \cite{Rudolph}). 
We consider two cases as follows. 
\par
(i) Suppose that there is no crossing joining the same two Seifert circles of $D$ as the two circles which are connected by the negative crossing: 
In this case, by Theorem~$\ref{stoimenow1}$, the surface $S$ has minimal genus. 
In particular, the surface is the fiber surface. 
By Gabai's results \cite{gabai2, gabai1}, the Seifert surface $S_{i}$ is also the fiber surface. 
Then, by Goda-Hirasawa-Yamamoto's result \cite[Corollary~$1.8$]{fiber-almostalt}, the fiber surface $S_{n}$ is a plumbing of positive Hopf bands. 
Since the positive Hopf band is a quasipositive surface and plumbings preserve the quasipositivites of surfaces \cite{Rudolph3}, the surface $S_{n}$ is quasipositive. 
Hence, the surface $S$ is quasipositive since it is a Murasugi sum of the quasipositive surfaces $S_{1}, \dots, S_{n}$ (see \cite{Rudolph3}). 
In particular, the knot $K$ is strongly quasipositive. 
\par
(ii) In other cases, by the same discussion as Theorem~$\ref{link-rem}$ (2), we have 
\begin{align*}
\tau(K)=g_{*}(K)=g(K)=g(D)-1, 
\end{align*}
where $\tau(K)$ is Ozsv{\' a}th-Szab{\' o}'s $\tau$-invariant of $K$. 
Hedden \cite[Theorem~1.2]{Hedden2} proved that for a fibered knot $K'$, 
the knot is strongly quasipositive if and only if $\tau(K')=g_{*}(K')=g(K')$. 
Hence, $K$ is strongly quasipositive. 
\end{proof}
\begin{lem}\label{lem:almost-strong1}
The knots $12_{n149}$, $12_{n332}$, $12_{n404}$ and $12_{n432}$ (see Figure~$\ref{four-knots}$) are strongly quasipositive. 
\end{lem}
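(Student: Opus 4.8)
The plan is to exhibit, for each of the four knots $12_{n149}$, $12_{n332}$, $12_{n404}$, $12_{n432}$, an explicit braid word of strongly quasipositive form $\beta=\prod_k \sigma_{i_k,j_k}$ whose closure is the given knot (or its mirror image, per the convention fixed at the start of Section~\ref{sec:table2}). Since strong quasipositivity is witnessed directly by such a band presentation, once the word is written down the only remaining task is to verify that its closure is the correct knot type. Concretely, I would first take the almost positive diagrams of these knots found in Subsection~\ref{sec:table-positive} (or standard braid words from KnotInfo \cite{knot_info}) and manipulate them by Markov moves, conjugation, and band slides into the embedded-band normal form; Figure~\ref{four-knots} is exactly the place to record the resulting strongly quasipositive diagrams.

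The key steps, in order, are: (1) for each knot, produce a candidate braid word that is visibly a product of positive embedded bands $\sigma_{i,j}$; (2) compute an invariant of the closure that is strong enough to pin down the knot among $12$-crossing knots — the Jones polynomial, the HOMFLYPT polynomial, or the Alexander polynomial together with the signature and the genus $g=g(D)$ read off from the quasipositive surface via Theorem~\ref{theorem:positivity} and Bennequin-type genus bounds — and match it against the KnotInfo data for the nominated knot; (3) conclude, using the definition of strong quasipositivity from Section~\ref{sec:strong-quasi}, that the knot (or its mirror) is strongly quasipositive. For knots where the two candidates (the knot and its mirror) have distinguishable invariants, the signature or the sign of the self-linking/Bennequin number resolves the chirality; for amphichiral ambiguities it does not matter.

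The main obstacle is step~(1): finding the band word is genuinely ad hoc, since there is no algorithm guaranteed to turn an arbitrary almost positive diagram into a strongly quasipositive one, and indeed Question~\ref{conj:stoimenow} is open precisely because this step can fail in general. In practice one searches: start from the almost positive braid, use the single negative band crossing together with nearby positive generators to try to cancel the negative letter against a positive one (a ``band move''), or reroute one strand so that the negative crossing becomes absorbed into a longer positive embedded band $\sigma_{i,j}$. Step~(2) is then essentially bookkeeping — one records the braid words in Figure~\ref{four-knots} and cites the computed polynomial invariants — and step~(3) is immediate from the definitions. So the proof will ultimately consist of four pictures (the strongly quasipositive diagrams) plus a short verification that each represents the asserted knot.
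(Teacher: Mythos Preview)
Your strategy is sound in principle but differs substantially from the paper's argument, and as written it is only a plan: the entire content of the lemma would reside in the four explicit band words, which you have not produced. You correctly identify step~(1) as the obstacle, and nothing you have written reduces that obstacle.

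The paper avoids the braid-word search altogether by working at the level of Seifert surfaces and invoking Rudolph's theorem \cite{Rudolph3} that plumbing and deplumbing preserve quasipositivity of surfaces. For each knot, it starts from the canonical Seifert surface of an almost positive diagram and exhibits a short sequence of deplumbings (recorded in separate figures, not in Figure~\ref{four-knots}, which merely displays the four knots) after which one is left with the canonical Seifert surface of a \emph{positive} diagram. The latter is quasipositive by \cite{Nakamura,Rudolph}, so the original surface is quasipositive and the knot is strongly quasipositive. This route is more systematic here: the deplumbing moves are local and can be read off the picture, so each case is a single figure rather than an ad hoc braid search. Your direct band-presentation approach, if completed, would have the advantage of being self-contained (no appeal to the plumbing calculus), but it trades a structural argument for four separate computations whose success is not guaranteed in advance.
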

\begin{proof}
Firstly, we check the strong quasipositivity of $12_{n149}$. 
As the pictures in Figure~$\ref{12n149}$ show, the canonical Seifert surface of a positive knot diagram is obtained from a Seifert surface of $12_{n149}$ by two deplumbings. 
Note that the canonical Seifert surface of a positive knot diagram is quasipositive (see \cite{Nakamura} and \cite{Rudolph}). 
Since plumbings and deplumbings preserve the quasipositivities of surfaces (see \cite{Rudolph3}), this Seifert surface of $12_{n149}$ is quasipositive. 
Hence $12_{n149}$ is strongly quasipositive. 
By the same discussion, we can prove that $12_{n332}$, $12_{n404}$ and $12_{n432}$ are strongly quasipositive (see Figures~$\ref{12n332}$, $\ref{12n404}$ and $\ref{12n432}$). 
\end{proof}
\begin{figure}[h]
\begin{center}
\includegraphics[scale=0.6]{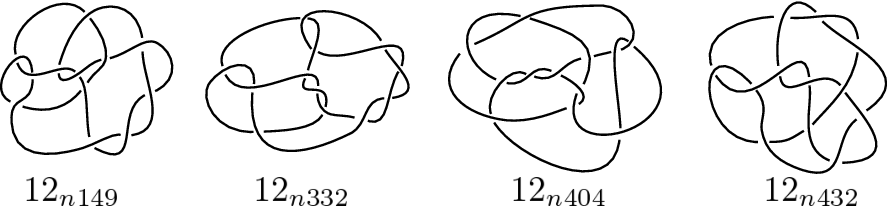}
\caption{$12_{n149}$, $12_{n332}$, $12_{n404}$ and $12_{n432}$. }\label{four-knots}
\end{center}
\end{figure}
\begin{figure}[h]
\begin{center}
\includegraphics[scale=0.795]{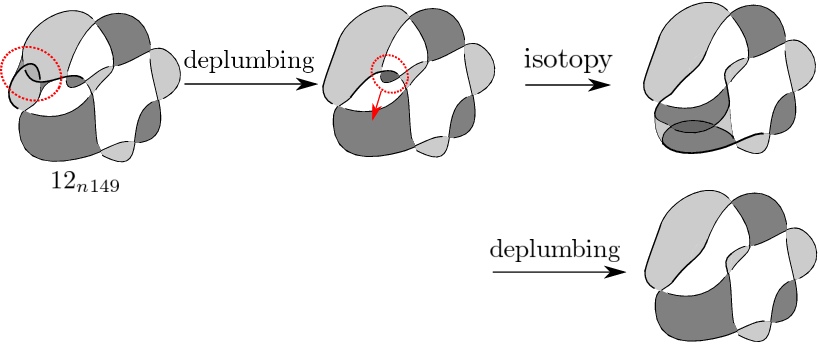}
\caption{The top left picture is the canonical Seifert surface of an almost positive diagram of $12_{n149}$. These pictures show that the Seifert surface is quasipositive. }\label{12n149}
\end{center}
\end{figure}
\begin{figure}[h]
\begin{center}
\includegraphics[scale=0.8]{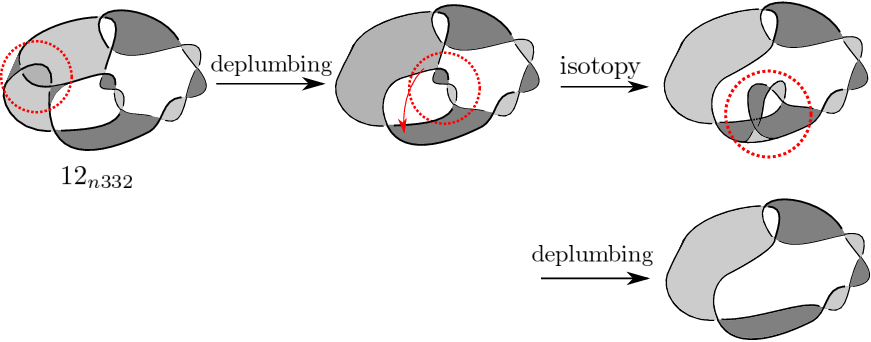}
\caption{A proof of the strong quasipositivity of $12_{n332}$. }\label{12n332}
\end{center}
\end{figure}
\begin{figure}[h]
\begin{center}
\includegraphics[scale=0.6]{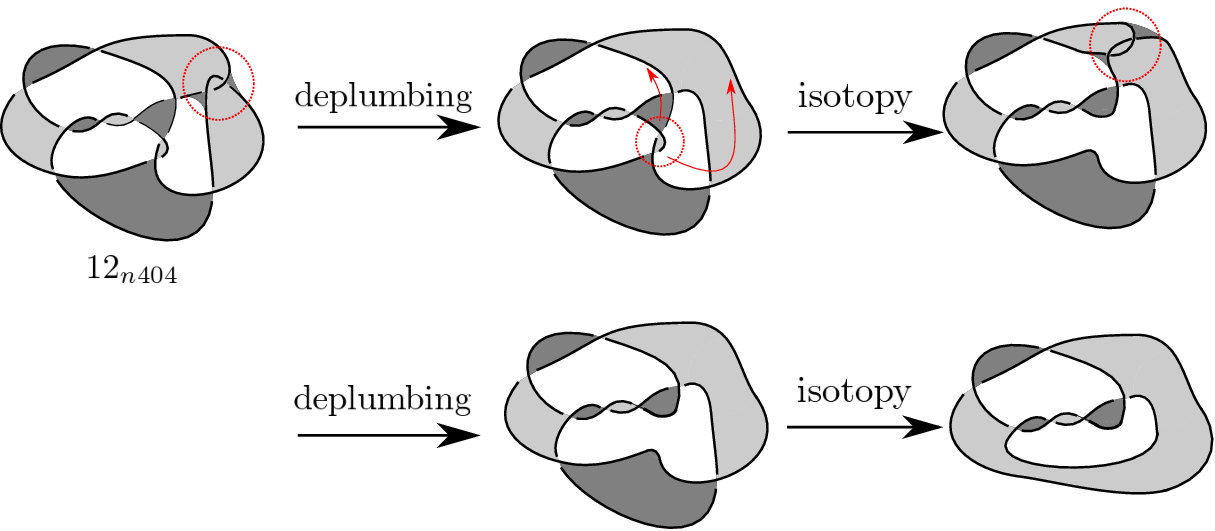}
\caption{A proof of the strong quasipositivity of $12_{n404}$. }\label{12n404}
\end{center}
\end{figure}
\begin{figure}[h]
\begin{center}
\includegraphics[scale=0.62]{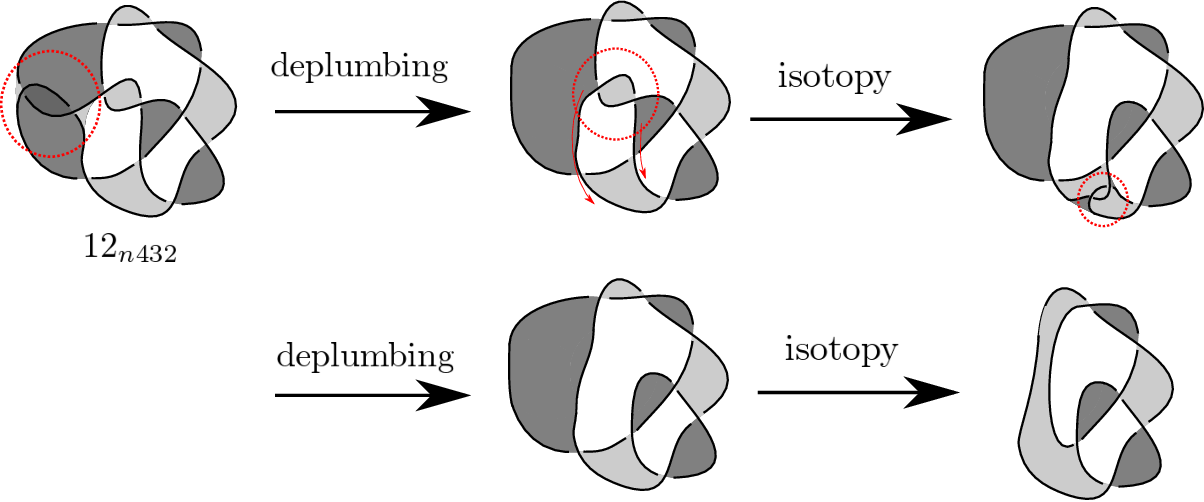}
\caption{A proof of the strong quasipositivity of $12_{n432}$. }\label{12n432}
\end{center}
\end{figure}
%
%
%
\section{Infinitely many counterexamples of Kauffman's conjecture on pseudo-alternating links and alternative links. }\label{sec:app}
In this section, we give infinitely many counterexamples of Kauffman's conjecture on pseudo-alternating links and alternative links. 
\par
At first, we recall the definition of pseudo-alternating links \cite{pseudoalternating}. 
A {\it primitive flat surface} is the canonical Seifert surface obtained from a special alternating diagram by Seifert's algorithm. 
A {\it generalized flat surface} is an orientable surface obtained from some primitive flat surfaces by Murasugi sum along their Seifert disks (for example, see the bottom figure in Figure~$\ref{Kn}$). 
Then, a link is {\it pseudo-alternating} if it bounds a generalized flat surface. 
\par
Next, we recall the definition of alternative links \cite{formal_knot}. 
For a link diagram $D$, the {\it spaces of $D$} are the connected components of the complement of the Seifert circles of $D$ in $S^{2}$. 
We draw an edge joining two Seifert circles at the place where a crossing of $D$ connects the circles. 
Moreover, we assign the sign ``$+$" (resp.~``$-$") to an edge if the crossing corresponding to the edge is positive (resp.~negative). 
Then, a diagram $D$ is {\it alternative} if for each space $X$ of $D$, all the edges in $X$ have the same sign. 
\par
From the definitions, we have the following. 
\begin{cor}
All alternative links are homogeneous. 
All homogeneous links are pseudo-alternating. 
\end{cor}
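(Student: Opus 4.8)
The final corollary asserts two implications: (a) every alternative link is homogeneous, and (b) every homogeneous link is pseudo-alternating.

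**Plan for part (a).** The plan is to argue directly from the two structural characterizations. Recall that a diagram $D$ is homogeneous exactly when it admits a $*$-product decomposition into special alternating factors, equivalently (by the discussion around Theorems~\ref{theorem:Rasmussen-homog} and \ref{lem:Rasmussen-homog}, via the Seifert graph $S_D$) when every block of the Seifert graph $S_D$ — every maximal $2$-connected component — has all its edges of the same sign. Meanwhile, $D$ is alternative when for every space $X$ of $D$ the edges lying in $X$ all carry the same sign. So the first step is to recall that the edges of $S_D$ that belong to a common space of $D$ are precisely the edges that can be ``linked'' within one region, and that a block of $S_D$ is contained in a single space: two edges in the same block lie on a common cycle in $S_D$, and such a cycle bounds (or is contained in the closure of) a single complementary region of the Seifert circles in $S^2$. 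Hence, given an alternative diagram $D$, any block of $S_D$ is a subgraph of the edge-set of one space, so all its edges share a sign; that is exactly the homogeneity condition. Therefore $D$ is homogeneous, and an alternative link — being represented by an alternative diagram — is homogeneous. The main thing to get right here is the topological claim that a block of the Seifert graph lives inside one space of $D$; this follows because the Seifert circles, pushed to $S^2$, cut it into the spaces, and a cycle in $S_D$ corresponds to a closed curve passing through a cyclic sequence of Seifert circles, which (by planarity / the fact that type~2 circles only appear along $*$-products) is confined to one face.

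**Plan for part (b).** Here I would use the $*$-product decomposition directly. If $L$ is homogeneous, choose a homogeneous diagram $D = D_1 * D_2 * \cdots * D_n$ with each $D_i$ special alternating. Each special alternating $D_i$ is, up to overall orientation, a special \emph{positive} or special \emph{negative} diagram, and its canonical Seifert surface (obtained by Seifert's algorithm) is by definition a primitive flat surface (for the negative factors one takes the mirrored special alternating diagram; the surface itself is the same object). The key classical fact to invoke is that the $*$-product of diagrams corresponds, on the level of canonical Seifert surfaces, to a Murasugi sum along the Seifert disks: a decomposing type~2 Seifert circle (or a connected-sum sphere) realizes precisely such a plumbing. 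Consequently the canonical Seifert surface of $D$ is a Murasugi sum of the primitive flat surfaces of the $D_i$, i.e. a generalized flat surface, and $L$ bounds it. Hence $L$ is pseudo-alternating.

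**Expected main obstacle.** The delicate point in (b) is matching conventions: a special alternating diagram need not be positive on the nose, and one must check that the surface from Seifert's algorithm applied to a special alternating diagram is genuinely a primitive flat surface in the sense of \cite{pseudoalternating} — this is a definitional unwinding but needs care about orientations of bands. In (a), the subtle step is the planarity argument that each block of $S_D$ sits in a single space of $D$; I expect that to be the real content, and the rest is bookkeeping. Both implications are essentially ``folklore'' consequences of the definitions together with the $*$-product / Murasugi-sum dictionary, so the proof should be short.
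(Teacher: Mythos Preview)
Your proof is correct, and you have in fact done more than the paper: the paper gives no argument at all, simply writing ``From the definitions, we have the following'' before stating the corollary. Your proposal supplies the details the paper omits. For part~(a), the block characterization of homogeneity you invoke is Cromwell's original formulation rather than something proved in this paper (which uses the $*$-product definition; the two are equivalent because cut vertices of $S_D$ correspond to decomposing Seifert circles), so your references to Theorems~\ref{theorem:Rasmussen-homog} and~\ref{lem:Rasmussen-homog} are slightly off --- those concern $\Delta(D)=0$, not blocks. Your key topological claim, that each block of $S_D$ has all its edges in a single space of $D$, is correct and can be seen directly: a special factor has only type-$1$ Seifert circles, so the tree of complementary regions is a star with one central region carrying every crossing; hence the alternative condition forces each special factor to have a single sign, i.e.\ to be special alternating. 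For part~(b), your worry about orientations is unnecessary: the paper defines a primitive flat surface as the canonical Seifert surface of a special alternating diagram (positive or negative alike), and the $*$-product of diagrams is by construction a Murasugi sum of their canonical Seifert surfaces along Seifert disks, so the Seifert surface of a homogeneous diagram is automatically a generalized flat surface.
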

Kauffman conjectured that all pseudo-alternating links are alternative. 
\begin{conjecture}[\cite{formal_knot}]\label{kauffman_conjecture}
All pseudo-alternating links are alternative. 
\end{conjecture}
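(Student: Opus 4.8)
The plan is to produce, for each $n$, an explicit knot $K_n$ which is pseudo-alternating but not homogeneous, and to arrange that the $K_n$ are pairwise distinct. The source of pseudo-alternating-but-not-homogeneous examples is already available: by Corollary~\ref{cor2}, any almost positive link fails to be homogeneous, so it suffices to exhibit infinitely many \emph{pseudo-alternating} almost positive knots. For this I would revisit the family of almost positive knots that Stoimenow introduced (see \cite[Example~6.1]{almost_stoimenow} and \cite[Corollary~10]{stoimenow1}), or equivalently construct a family directly as follows: start from a special alternating (hence positive, after orienting) diagram, perform a single crossing change to introduce exactly one negative crossing, and do this inside a Murasugi-sum (plumbing) pattern so that the resulting Seifert surface is visibly a Murasugi sum of primitive flat surfaces, i.e.\ a generalized flat surface. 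Concretely, one takes a ``primitive flat'' piece carrying the lone negative crossing and plumbs onto it arbitrarily many primitive flat pieces (for instance positive Hopf bands or $(2,2k)$-torus pieces) to get surfaces $F_n$ whose boundaries $K_n$ are almost positive by construction; the diagram read off from $F_n$ has exactly one negative crossing.

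The key steps, in order, would be: (1) Write down the family $K_n$ explicitly as closures of braids (or as Murasugi sums of primitive flat surfaces $P_0 * P_1 * \cdots * P_n$, where $P_0$ carries the one negative crossing and each $P_i$, $i\ge 1$, is a primitive flat surface). (2) Check that each $K_n$ is almost positive, i.e.\ it is non-positive and admits a diagram with exactly one negative crossing. Non-positivity is the only delicate point here and can be obtained from an obstruction already listed in the paper: for instance, using that positive knots up to genus two are quasialternating (Theorem~\ref{kh-thick}) together with the fact that our $K_n$ are not quasialternating, or using the signature/Conway-polynomial bounds (Theorems~\ref{almost4}, \ref{almost2}, \ref{thm:almost3}) to rule out a positive diagram, or simply computing $s(K_n)$ via Theorem~\ref{link-rem} and comparing with what a positive diagram would force. (3) Observe that $K_n$ is pseudo-alternating: by construction its Seifert surface $F_n$ is a Murasugi sum of primitive flat surfaces, which is exactly the definition of a generalized flat surface, so $K_n$ bounds one. (4) Apply Corollary~\ref{cor2}: since $K_n$ is almost positive it is not homogeneous. (5) Show the $K_n$ are mutually distinct, so that we really have infinitely many knots; this follows because $g(K_n)\to\infty$ as $n\to\infty$ (each plumbing raises the genus of the fiber/Seifert surface, and by Stoimenow's genus formula Theorem~\ref{stoimenow1} this genus is the genuine three-genus of $K_n$), and a strictly increasing sequence of genera forces infinitely many distinct knots.

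For the write-up I would phrase (1)--(3) around a single well-chosen picture: a generalized flat surface obtained by plumbing $n$ primitive flat bands onto a fixed ``seed'' primitive flat surface into which one crossing change has been made, exactly in the spirit of the bottom figure in Figure~\ref{Kn}. The boundary of that surface is our $K_n$; reading the diagram off the surface shows it is almost positive, and the surface being a Murasugi sum of primitive flat surfaces shows $K_n$ is pseudo-alternating. Everything else is then quotation of results already in the paper.

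The main obstacle will be step (2), establishing non-positivity of the $K_n$ cleanly and uniformly in $n$ — i.e.\ ruling out that some clever alternative diagram of $K_n$ has only positive crossings. The safest route is to pin down $g(K_n)$ and $s(K_n)$ exactly (via Theorems~\ref{stoimenow1} and \ref{link-rem}) and then invoke Theorem~\ref{theorem:positivity2}: if $K_n$ were positive it would be homogeneous, but any homogeneous link with $s=2g+(\sharp L-1)$ is positive and, crucially, a reduced homogeneous diagram of it would have to be positive (Lemma~\ref{lem:positivity}), which one can contradict by exhibiting an obstruction (thinness/quasialternation failure, or a signature computation) showing $K_n$ cannot be positive. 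Packaging this obstruction so that it applies to the whole family at once — rather than knot by knot — is the part that requires the most care.
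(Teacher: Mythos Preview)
Your overall strategy is the same as the paper's: take Stoimenow's family $K_n$, apply Corollary~\ref{cor2} to conclude non-homogeneity (hence non-alternativity), and exhibit an explicit generalized flat surface bounded by $K_n$ to verify pseudo-alternation. This is precisely Proposition~\ref{counterexample}, with Figure~\ref{Kn} supplying the Murasugi-sum picture you describe.

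Where you diverge is in treating non-positivity (your step~(2)) as the main obstacle requiring work. The paper sidesteps this entirely: Stoimenow already proved in \cite[Example~6.1]{almost_stoimenow} that each $K_n$ is almost positive, and the paper simply cites that fact. Your proposed argument in the last paragraph is circular as written: assuming $K_n$ positive and deducing it is homogeneous with $s=2g$ yields no contradiction---you then still need an \emph{independent} obstruction to positivity, which is exactly what you set out to prove. The suggestions you list (quasialternation failure via Theorem~\ref{kh-thick}, signature via Proposition~\ref{prop:almost}) could in principle be made to work uniformly in $n$, but none of this is necessary: citing Stoimenow suffices, and that is what the paper does. Similarly, your step~(5) on distinctness is reasonable but also already implicit in Stoimenow's work; the paper does not spell it out.
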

However, this conjecture is false. 
In fact Silvero \cite{silvero} introduced two counterexamples, $10_{145}$ and $L9n18$. 
\par
Here, we prove that the infinitely many almost positive knots introduced by Stoimenow (which contains $10_{145}$) are counterexamples for this conjecture. 
\begin{prop}\label{counterexample}
Let $K_{n}$ be the knot depicted in Figure~\ref{Kn2}. Then, $K_{n}$ is non-alternative and is pseudo-alternating. 
\end{prop}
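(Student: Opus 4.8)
The plan is to establish the two assertions — non-alternativity and pseudo-alternation — essentially separately, drawing on the results accumulated earlier in the paper. First I would nail down the structure of $K_n$: by construction (Figure~\ref{Kn2}), $K_n$ should admit an almost positive diagram $D_n$ obtained by inserting $n$ extra positive twists into the almost positive diagram of $10_{145}$, so that each $K_n$ is either positive or almost positive. If some $K_n$ were positive it would have nonnegative signature by the contrapositive of Theorem~\ref{almost4}, or we can use Proposition~\ref{prop:almost} / Theorem~\ref{thm:almost3} to exclude this as $n$ grows and $g(K_n)\to\infty$; thus all but finitely many $K_n$ are genuinely almost positive. This is precisely the family Stoimenow constructed (cited in the remark after Question~\ref{question:almost}), so I would invoke that $K_n$ is almost positive for all large $n$.

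For non-alternativity I would argue as follows. An alternative link is homogeneous (the corollary just before Conjecture~\ref{kauffman_conjecture}). But $K_n$ is almost positive, and by Corollary~\ref{cor2} (Main result~3) no almost positive link is homogeneous. Hence $K_n$ is not alternative. This handles the first half cleanly and is really the payoff of the machinery of Sections~\ref{sec:homogeneous}--\ref{sec:almost-positive-link}.

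For pseudo-alternation I would exhibit a generalized flat surface bounded by $K_n$ directly from the picture (the bottom of Figure~\ref{Kn}/\ref{Kn2} is drawn precisely to display this). Concretely, I would check that the canonical Seifert surface of the chosen diagram decomposes as a Murasugi sum along Seifert disks of primitive flat surfaces — i.e.\ canonical surfaces of special alternating diagrams. Since $D_n$ is built by plumbing: the positive part is a $*$-product of special alternating (indeed positive, hence special alternating) diagrams, and the one ``exceptional'' block containing the negative crossing is itself a special alternating tangle. Each factor's Seifert surface is primitive flat, and the whole surface is their Murasugi sum, so $K_n$ bounds a generalized flat surface and is pseudo-alternating by definition.

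The main obstacle I anticipate is the pseudo-alternation step: one must verify that the specific diagram of $K_n$ genuinely has a $*$-product / Murasugi-sum decomposition into \emph{special alternating} pieces — in particular that the block carrying the negative crossing, together with whatever positive crossings share its pair of Seifert circles, forms a special alternating diagram rather than merely a homogeneous one. This is a combinatorial check on the plumbing structure visible in Figure~\ref{Kn2}, and it is the place where a careful look at the picture (rather than a soft argument) is required; everything else follows formally from Corollary~\ref{cor2} and the definitions.
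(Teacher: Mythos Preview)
Your non-alternativity argument is exactly the paper's: cite Stoimenow for the fact that $K_n$ is almost positive (for \emph{all} $n$, not just large $n$ --- there is no need to re-derive this), then apply Corollary~\ref{cor2} to conclude $K_n$ is not homogeneous, hence not alternative.

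The pseudo-alternation step, however, contains a real gap, and in fact your proposed approach would contradict the first half of your own proof. You want to show that the canonical Seifert surface of the almost positive diagram $D_n$ decomposes as a $*$-product of special alternating pieces. But a diagram admitting such a $*$-product decomposition is \emph{by definition} homogeneous, and hence $K_n$ would be homogeneous --- contradicting Corollary~\ref{cor2}, which you just invoked. Concretely: in the $*$-product decomposition of $D_n$, the special factor containing the negative crossing is a special almost positive diagram, and a special diagram is alternating if and only if it is positive or negative; a special diagram with one negative and several positive crossings is \emph{not} alternating. So the ``exceptional block'' is not special alternating, and the canonical Seifert surface of $D_n$ is not a generalized flat surface.

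What the paper actually does (Figure~\ref{Kn}) is perform a non-trivial isotopy of the Seifert surface of $K_n$ into a \emph{different} surface that is visibly a Murasugi sum of primitive flat pieces. This isotopy does not correspond to a $*$-product decomposition of $D_n$; it rearranges the surface so that the negative crossing ends up in a block that is genuinely special alternating (negative). The point is precisely that pseudo-alternating only requires \emph{some} generalized flat surface, not the canonical one from a given diagram --- and finding that surface here requires a geometric manipulation, not a diagrammatic decomposition of $D_n$.
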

\begin{proof}
Stoimenow \cite[Example ~$6.1$]{almost_stoimenow} proved that $K_{n}$ is almost positive. 
By Corollary~$\ref{cor2}$, the knot $K_{n}$ is not homogeneous, in particular, not alternative. 
On the other hand, by Figure~$\ref{Kn}$, the knot $K_{n}$ bounds a generalized flat surface. 
\end{proof}
\begin{proof}[Proof of Proposition~$\ref{cor3}$]
This follows from Proposition~\ref{counterexample}. 
\end{proof}
Finally, we give two questions. 
\begin{question}
Are all almost positive links pseudo-alternating?
\end{question}
\begin{question}
Are all homogeneous links alternative?
\end{question}
\begin{figure}[h]
\begin{center}
\includegraphics[scale=0.9]{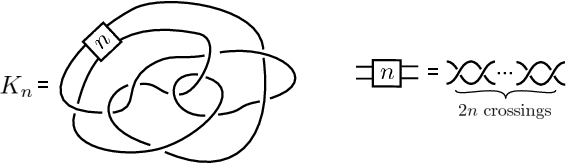}
\caption{The knot $K_{n}$ introduced by Stoimenow \cite[Example ~$6.1$]{almost_stoimenow}, where $n\geq 0$ is the number of the full twists. Stoimenow proved that $K_{n}$ is almost positive. }\label{Kn2}
\end{center}
\end{figure}
\begin{figure}[h]
\begin{center}
\includegraphics[scale=0.9]{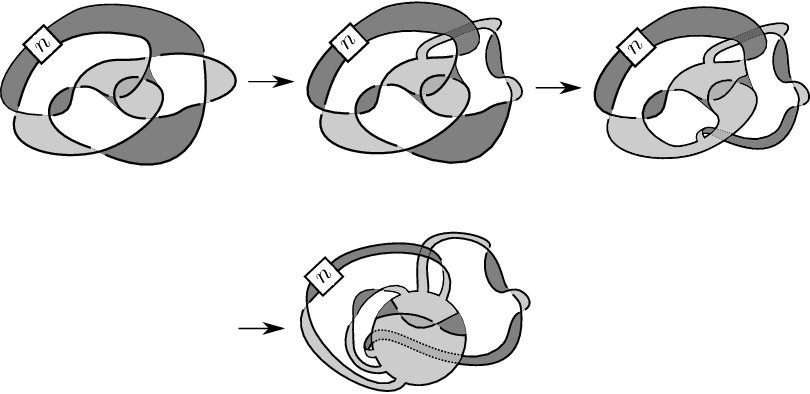}
\caption{The top left picture is a Seifert surface of $K_{n}$. By isotopy, the surface changes into the bottom surface which is a generalized flat surface. }\label{Kn}
\end{center}
\end{figure}
\noindent{\bf Acknowledgements: }
The first author was supported by JSPS KAKENHI Grant numbers 13J05998 and 16K17597. 
Some parts of this paper were written in 2011 during his stay at Indiana University. 
He deeply thanks Paul Kirk and Chuck Livingston for their hospitality. 
He also thanks Kokoro Tanaka for telling him Corollary~\ref{cor:Nakamura} a few years ago.
The second author was supported by JSPS KAKENHI Grant numbers 13J01362, 15J01087 and 16H07230. 
He would like to thank K{\'a}lm{\'a}n Tam{\'a}s for his helpful comments and warm encouragement. 
The authors would like to thank the referee for his/her kind and helpful comments. 
%
\par
\ 
\par
\noindent
{\bf Comments: }
We note that this is the published version \cite{abe-tagami1-1} of \cite{abe-tagami1}. 
In the previous version \cite{abe-tagami1}, we also gave a survey for the Khovanov homology, the Lee homology, Rasmussen and Beliakova-Wehrli's invariant and Kawamura-Lobb's inequality. 
%
\bibliographystyle{amsplain}
\bibliography{mrabbrev,tagami}
\end{document}